\newtheorem{theorem}{Theorem}
\newtheorem{corollary}[theorem]{Corollary}
\newtheorem{problem}[theorem]{Problem}
\newtheorem{remark}[theorem]{Remark}
\def\neweq#1{\begin{equation}\label{#1}}
\def\endeq{\end{equation}}
\def\eq#1{(\ref{#1})}
\def\endproof{\hfill $\Box$}
\renewcommand{\arraystretch}{1.5}
\newcommand{\R}{\mathbb{R}}
\newcommand{\N}{\mathbb{N}}
\newcommand{\eps}{\varepsilon}
\newcommand{\E}{{\cal E}}
\begin{document}

\title{Nonlinearity in oscillating bridges}

\author{Filippo GAZZOLA\\
{\small Dipartimento di Matematica del Politecnico, Piazza L. da Vinci 32 - 20133 Milano (Italy)}}

\date{}
\maketitle
\begin{abstract}
We first recall several historical oscillating bridges that, in some cases, led to collapses. Some of them are quite recent and show that,
nowadays, oscillations in suspension bridges are not yet well understood. Next, we survey some attempts to model bridges with differential equations. Although
these equations arise from quite different scientific communities, they display some common features. One of them, which we believe to be incorrect, is
the acceptance of the linear Hooke law in elasticity. This law should be used only in presence of small deviations from equilibrium, a situation
which does not occur in strongly oscillating bridges. Then we discuss a couple of recent models whose solutions exhibit self-excited oscillations,
the phenomenon visible in real bridges. This suggests a different point of view in modeling equations and gives a strong hint how to modify the existing models
in order to obtain a reliable theory. The purpose of this paper is precisely to highlight the necessity of revisiting classical models, to introduce reliable models,
and to indicate the steps we believe necessary to reach this target.\par\noindent
{\em AMS Subject Classification 2010: 74B20, 35G31, 34C15, 74K10, 74K20.}
\end{abstract}

\tableofcontents

\vfill\eject

\section{Introduction}

The story of bridges is full of many dramatic events, such as uncontrolled oscillations which, in some cases, led to collapses.
To get into the problem, we invite the reader to have a look at the videos \cite{assago,london,tacoma,volgograd}.
These failures have to be attributed to the action of external forces, such as the wind or traffic loads, or to macroscopic mistakes in the projects.
{From} a theoretical point of view, there is no satisfactory mathematical model which, up to nowadays, perfectly describes the complex behavior of bridges.
And the lack of a reliable analytical model precludes precise studies both from numerical and engineering points of views.\par
The main purpose of the present paper is to show the necessity of revisiting existing models since they fail to describe the behavior of real bridges. We will
explain which are the weaknesses of the so far considered equations and suggest some possible improvements according to the fundamental rules of classical
mechanics. Only with some nonlinearity and with a sufficiently large number of degrees of freedom several behaviors may be modeled.
We do not claim to have a perfect model, we just wish to indicate the way to reach it. Much more work is needed and we explain what we believe to
be the next steps.\par
We first survey and discuss some historical events, we recall what is known in elasticity theory, and we describe in full detail the existing models.
With this database at hand, our purpose is to analyse the oscillating behavior of certain bridges, to determine the causes of oscillations,
and to give an explanation to the possible appearance of different kinds of oscillations, such as torsional oscillations. Due to the lateral sustaining cables,
suspension bridges most emphasise these oscillations which, however, also appear in other kinds of bridges: for instance, light pedestrian
bridges display similar behaviors even if their mechanical description is much simpler.\par
According to \cite{goldstein}, chaos is a disordered and unpredictable behavior of solutions in a dynamical system.
With this characterization, there is no doubt that chaos is somehow present in the disordered and unpredictable oscillations of bridges. From
\cite[Section 11.7]{goldstein} we recall a general principle (GP) of classical mechanics:
\begin{center}
%\fbox%{%
\begin{minipage}{162mm}
{\bf (GP)} {\em The minimal requirements for a system of first-order equations to exhibit chaos is that they be nonlinear and have at least three variables.}
\end{minipage}
%}
\end{center}
This principle suggests that
\begin{center}
{\bf any model aiming to describe oscillating bridges should be nonlinear and with enough degrees of freedom.}
\end{center}

Most of the mathematical models existing in literature fail to satisfy (GP) and, therefore, must be accordingly modified. We suggest possible modifications
of the corresponding differential equations and we believe that, if solved, this would lead to a better understanding of the underlying phenomena
and, perhaps, to several practical actions for the plans of future bridges, as well as remedial measures for existing structures. In particular, one of
the major scopes of this paper is to convince the reader that linear theories are not suitable for the study of bridges oscillations whereas,
although they are certainly too naive, some recent nonlinear models do display self-excited oscillations as visible in bridges.\par
In Section \ref{story}, we collect a number of historical events and observations about bridges, both suspended and not.
A complete story of bridges is far beyond the scopes of the present paper and the choice of events is mainly motivated by the phenomena that they displayed.
The description of the events is accompanied by comments of engineers and of witnesses, and by possible theoretical explanations of the observed phenomena.
The described events are then used in order to figure out a common behavior of oscillating bridges; in particular, it appears that the requirements of (GP) must
be satisfied. Recent events testify that the problems of controlling and forecasting bridges oscillations is still unsolved.\par
In Section \ref{howto}, we discuss several equations appearing in literature as models for oscillating bridges. Most of them use in some point the well-known
linear Hooke law (${\cal LHL}$ in the sequel) of elasticity. This is what we believe to be a major weakness, but not the only one, of all these models.
This is also the opinion of McKenna \cite[p.16]{mckmonth}:
\begin{center}
%\fbox%{%
\begin{minipage}{162mm}
{\em We doubt that a bridge oscillating up and down by about 10 meters every 4 seconds obeys Hooke's law.}
\end{minipage}
%}
\end{center}
{From} \cite{britannica}, we recall what is known as ${\cal LHL}$.
\begin{center}
%\fbox%{%
\begin{minipage}{162mm}
{\em The linear Hooke law (${\bf {\cal LHL}}$) of elasticity, discovered by the English scientist Robert Hooke in 1660, states that for relatively small
deformations of an object, the displacement or size of the deformation is directly proportional to the deforming force or load. Under these conditions
the object returns to its original shape and size upon removal of the load. ... At relatively large values of applied force, the deformation
of the elastic material is often larger than expected on the basis of ${\cal LHL}$, even though the material remains elastic and returns to
its original shape and size after removal of the force. ${\cal LHL}$ describes the elastic properties of materials only in the range in which
the force and displacement are proportional.}
\end{minipage}
%}
\end{center}
Hence, by no means one should use ${\cal LHL}$ in presence of large deformations. In such case, the restoring elastic force $f$ is ``more than linear''.
Instead of having the usual form $f(s)=ks$, where $s$ is the displacement from equilibrium and $k>0$ depends on the elasticity of the deformed material,
it has an additional superlinear term $\varphi(s)$ which becomes negligible for small displacements $s$. More precisely,
$$f(s)=ks+\varphi(s)\qquad\mbox{with}\qquad\lim_{s\to0}\frac{\varphi(s)}{s}=0\ .$$
The simplest example of such term is $\varphi(s)=\eps s^p$ with $\eps>0$ and $p>1$; this superlinear term may become arbitrarily small
for $\eps$ small and/or $p$ large. Therefore, the parameters $\eps$ and $p$, which do exist, may be chosen in such a way to describe with
a better precision the elastic behavior of a material when large displacements are involved. As we shall see, this apparently harmless
and tiny nonlinear perturbation has devastative effects on the models and, moreover, it is amazingly useful to display self-excited oscillations
as the ones visible in real bridges. On the contrary, linear models prevent to view the real phenomena which occur in bridges, such as the sudden
increase of the width of their oscillations and the switch to different ones.\par
The necessity of dealing with nonlinear models is by now quite clear also in more general elasticity problems;
from the preface of the book by Ciarlet \cite{ciarletbook}, let us quote
\begin{center}
%\fbox%{%
\begin{minipage}{162mm}
{\em ... it has been increasingly acknowledged that the classical linear equations of elasticity, whose mathematical theory is now firmly
established, have a limited range of applicability, outside of which they should be replaced by genuine nonlinear equations that they in effect approximate.}
\end{minipage}
%}
\end{center}

In order to model bridges, the most natural way is to view the roadway as a thin narrow rectangular plate. In Section \ref{elasticity}, we quote several
references which show that classical linear elastic models for thin plates do not describe with a sufficient accuracy large deflections of a plate. But even linear
theories present considerable difficulties and a further possibility is to view the bridge as a one dimensional beam; this model is much simpler but, of course, it
prevents the appearance of possible torsional oscillations. This is the main difficulty in modeling bridges: find simple models which, however, display the same
phenomenon visible in real bridges.\par
In Section \ref{models} we survey a number of equations arising from different scientific communities. The first equations are based on engineering models
and mainly focus the attention on quantitative aspects such as the exact values of the parameters involved. Some other equations are more related
to physical models and aim to describe in full details all the energies involved. Finally, some of the equations are purely mathematical models
aiming to reach a prototype equation and proving some qualitative behavior. All these models have to face a delicate choice: either consider uncoupled
behaviors between vertical and torsional oscillations of the roadway or simplify the model by decoupling these two phenomena. In the former case,
the equations have many degrees of freedom and become terribly complicated: hence, very few results can be obtained. In the latter case, the model
fails to satisfy the requirements of (GP) and appears too far from the real world.\par
As a compromise between these two choices, in Section \ref{blup} we recall the model introduced in \cite{gazpav,gazpav3} which describes vertical
oscillations and torsional oscillations of the roadway within the same simplified beam equation. The solution to the equation
exhibits self-excited oscillations quite similar to those observed in suspension bridges. We do not believe that the simple equation considered models
the complex behavior of bridges but we do believe that it displays the same phenomena as in more complicated models closer related to bridges.
In particular, finite time blow up occurs with wide oscillations. These phenomena are typical of differential equations of at least
fourth order since they do not occur in lower order equations, see \cite{gazpav}. We also show that the same phenomenon is visible in a $2\times2$ system of
nonlinear ODE's of second order related to a system suggested by McKenna \cite{mckmonth}.\par
Putting all together, in Section \ref{afford} we afford an explanation in terms of the energies involved.
Starting from a survey of milestone historical sources \cite{bleich,tac2}, we attempt a qualitative but detailed energy balance and we attribute the appearance
of torsional oscillations in bridges to some ``hidden'' elastic energy which is not visible since it involves second order derivatives of the displacement of the
bridge: this suggests an analogy between bridges oscillations and a ball bouncing on the floor. The discovery of the phenomenon usually called in literature
{\em flutter speed} has to be attributed to Bleich \cite{bleichsolo}; in our opinion, the flutter speed should be seen as a {\bf critical energy threshold} which,
if exceeded, gives rise to uncontrolled phenomena such as torsional oscillations. We give some hints on how to determine the critical energy threshold,
according to some eigenvalue problems whose
eigenfunctions describe the oscillating modes of the roadway.\par
In bridges one should always expect vertical oscillations and, in case they become very large, also torsional oscillations; in order to display
the possible transition between these two kinds of oscillations, in Section \ref{newmodel} we suggest a new equation as a model for suspension
bridges, see \eq{truebeam}. With all the results and observations at hand, in Section \ref{possibleTacoma} we also attempt
a detailed description of what happened on November 10, 1940, the day when the Tacoma Narrows Bridge collapsed.
As far as we are aware a universally accepted explanation of this collapse in not yet available. Our explanation fits with all
the material developed in the present paper. This allows us to suggest a couple of precautions when planning future bridges, see Section \ref{howplan}.\par
We recently had the pleasure to participate to a conference on bridge maintenance, safety and management, see \cite{iabmas}.
There were engineers from all over the world, the atmosphere was very enjoyable
and the problems discussed were extremely interesting. And there was a large number of basic questions still
unsolved, most of the results and projects had some percentage of incertitude. Many talks were devoted to suggest new equations to model the
studied phenomena and to forecast the impact of new structural issues: even apparently simple problems are still unsolved.
We believe this should be a strong motivation for many mathematicians (from mathematical physics, analysis, numerics) to get interested in
bridges modeling, experiments, and performances. Throughout the paper we suggest a number of open problems which, if solved, could be a good starting point
to reach a deeper understanding of oscillations in bridges.

\section{What has been observed in bridges}\label{story}

A simplified picture of a suspension bridge can be sketched as in Figure \ref{67}
\begin{figure}[ht]
\begin{center}
{\includegraphics[height=32mm, width=72mm]{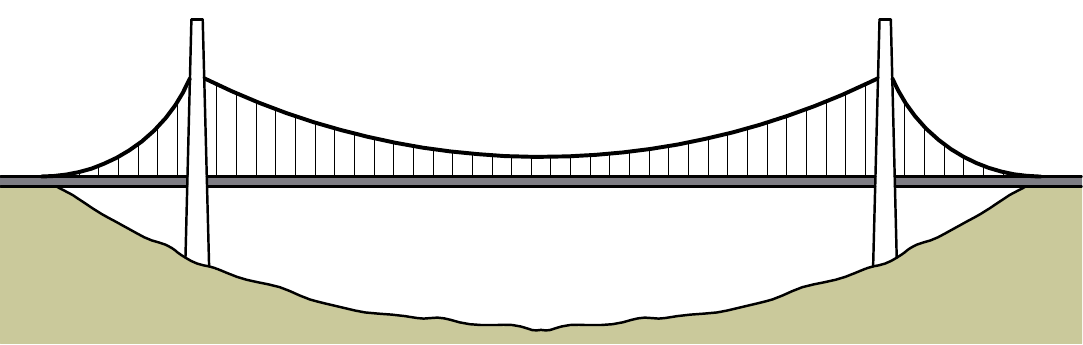}}\qquad\qquad
{\includegraphics[height=32mm, width=72mm]{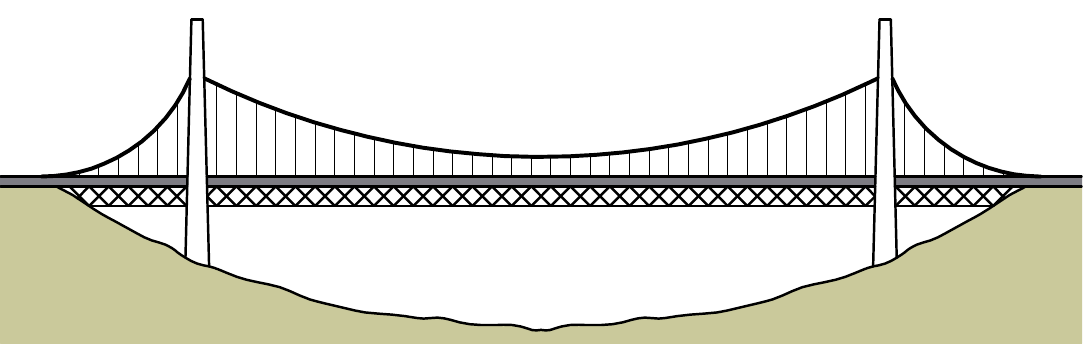}}
\caption{Suspension bridges without girder and with girder.}\label{67}
\end{center}
\end{figure}
where one sees the difference between the elastic structure of a bridge without girder and the more stiff structure of a bridge with girder.\par
Although the first project of a suspension bridge is due to the Italian engineer Verantius around 1615, see \cite{veranzio} and \cite[p.7]{navier2}
or \cite[p.16]{kawada2}, the first suspension bridges were built only about two centuries later in Great Britain. According to \cite{bender},
\begin{center}
%\fbox%{%
\begin{minipage}{162mm}
{\em The invention of the suspension bridges by Sir Samuel Brown sprung from the sight of a spider's web hanging across the path of the inventor, observed
on a morning's walk, when his mind was occupied with the idea of bridging the Tweed.}
\end{minipage}
%}
\end{center}
Samuel Brown (1776-1852) was an early pioneer of suspension bridge design and construction. He is best known for the Union Bridge of 1820, the first vehicular
suspension bridge in Britain.\par
An event deserving mention is certainly the inauguration of the Menai Straits Bridge, in 1826. The project of the bridge was due to Thomas Telford
and the opening of the bridge is considered as the beginning of a new science nowadays known as ``Structural Engineering''. The construction
of this bridge had a huge impact in the English society,
a group of engineers founded the ``Institution of Civil Engineers'' and Telford was elected the first president of this association. In 1839 the Menai
Bridge collapsed due to a hurricane. In that occasion, unexpected oscillations appeared; Provis \cite{provis} provided the following description:
\begin{center}
%\fbox%{%
\begin{minipage}{162mm}
{\em ... the character of the motion of the platform was not that of a simple undulation, as had been anticipated, but the movement of the
undulatory wave was oblique, both with respect to the lines of the bearers, and to the general direction of the bridge.}
\end{minipage}
%}
\end{center}

Also the Broughton Suspension Bridge was built in 1826. It collapsed in 1831 due to mechanical resonance induced by troops marching over the bridge
in step. A bolt in one of the stay-chains snapped, causing the bridge to collapse at one end, throwing about 40 men into the river. As a consequence
of the incident, the British Army issued an order that troops should ``break step'' when crossing a bridge. These two pioneering bridges already
show how the wind and/or traffic loads, both vehicles and pedestrians, play a crucial negative role in the bridge stability.\par
A further event deserving to be mentioned is the collapse of the Brighton
Chain Pier, built in 1823. It collapsed a first time in 1833, it was rebuilt and partially destroyed once again in 1836. Both the
collapses are attributed to violent windstorms. For the second collapse a witness, William Reid, reported valuable
observations and sketched a picture illustrating the destruction \cite[p.99]{reid}, see Figure \ref{brighton}
\begin{figure}[ht]
\begin{center}
{\includegraphics[height=39mm, width=78mm]{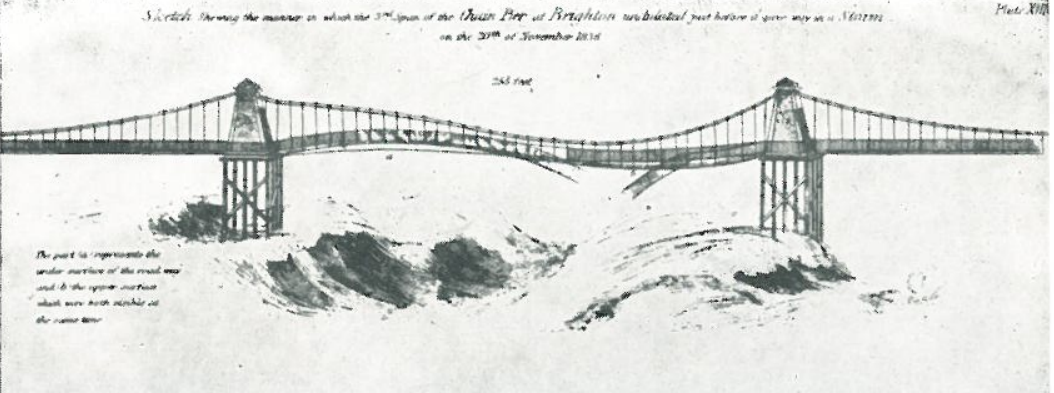}}\quad
{\includegraphics[height=39mm, width=78mm]{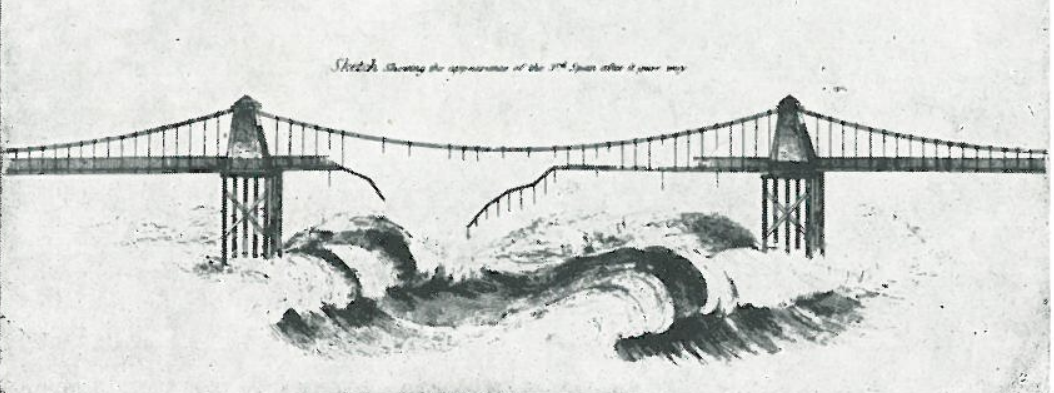}}
\caption{Destruction of the Brighton Chain Pier.}\label{brighton}
\end{center}
\end{figure}
which is taken from \cite{rocard}. This is the first reliable report on oscillations appearing in bridges, the most intriguing part of the report
being \cite{reid,rocard}:
\begin{center}
%\fbox%{%
\begin{minipage}{162mm}
{\em For a considerable time, the undulations of all the spans seemed nearly equal ... but soon after midday the lateral oscillations of the third
span increased to a degree to make it doubtful whether the work could withstand the storm; and soon afterwards the oscillating motion across the
roadway, seemed to the eye to be lost in the undulating one, which in the third span was much greater than in the other three; the undulatory
motion which was along the length of the road is that which is shown in the first sketch; but there was also an oscillating motion of the great chains
across the work, though the one seemed to destroy the other ...}
\end{minipage}
%}
\end{center}
More comments about this collapse are due to Russell \cite{russell}; in particular, he claims that
\begin{center}
%\fbox%{%
\begin{minipage}{162mm}
{\em ... the remedies I have proposed, are those by which such destructive vibrations would have been rendered impossible.}
\end{minipage}
%}
\end{center}
These two comments may have several interpretations. However, what appears absolutely clear is that different kinds of oscillations appeared
(undulations, lateral oscillations, oscillation motion of the great chains) and some of them were considered destructive.
Further details on the Brighton Chair Pier collapse may be found in \cite[pp.4-5]{bleich}.\par
Some decades earlier, at the end of the eighteenth century, the German physicist Ernst Chladni was touring Europe and showing,
among other things, the nodal line patterns of vibrating plates, see Figure \ref{patterns}.
\begin{figure}[ht]
\begin{center}
{\includegraphics[height=71mm, width=65mm]{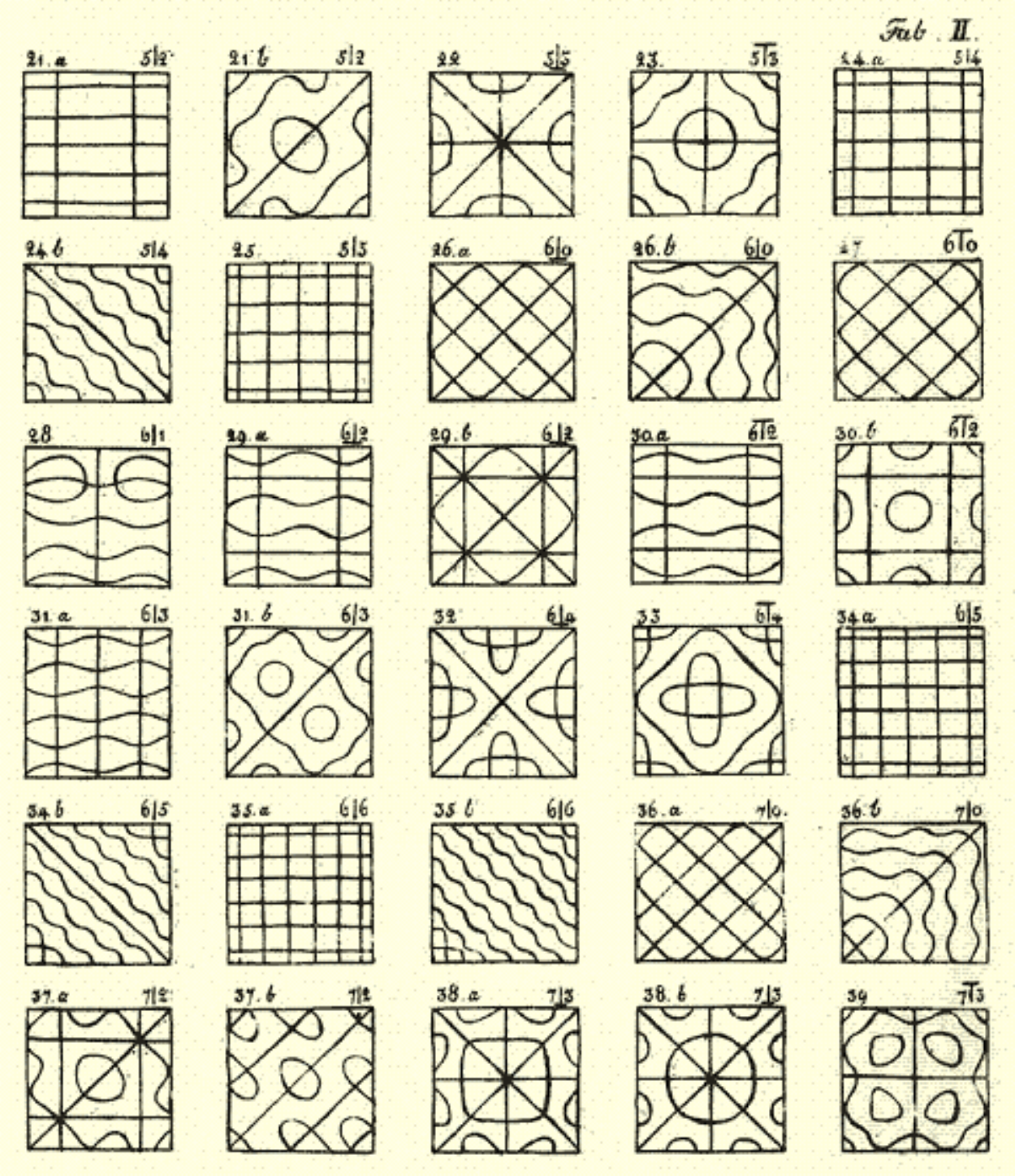}}
\caption{Chladni patterns in a vibrating plate.}\label{patterns}
\end{center}
\end{figure}
Chladni's technique, first published in \cite{chl}, consisted of creating vibrations in a square-shaped metal plate whose surface was covered with light sand.
The plate was bowed until it reached resonance, when the vibration caused the sand to concentrate along the nodal lines of vibrations,
see \cite{chladniexperiment} for the nowadays experiment. This simple but very effective way to display the nodal lines of vibrations was seen by
Navier \cite{navier} as
\begin{center}
%\fbox%{%
\begin{minipage}{162mm}
{\em Les curieuses exp\'eriences de M. Chaldni sur les vibrations des plaques...}
\end{minipage}
%}
\end{center}
It appears quite clearly from Figure \ref{patterns} how complicated may be the vibrations of a thin plate and hence, see Section \ref{elasticity},
of a bridge. And, indeed, the just described events testify that, besides the somehow expected vertical oscillations, also different kinds of oscillations
may appear. For instance, one may have ``an oblique undulatory wave'' or some kind of resonance or the interaction with other structural
components such as the suspension chains. The description of different coexisting forms of oscillations is probably the most important
open problem in suspension bridges.\par
It is not among the scopes of this paper to give the complete story of bridges collapses for which we refer to \cite[Section 1.1]{bleich}, to
\cite[Chapter IV]{rocard}, to \cite{aer,tac1,hayden,ward}, to the recent monographs \cite{akesson,kawada2}, and also to \cite{bridgefailure} for a complete database.
Let us just mention that between 1818 and 1889, ten suspension bridges suffered major damages or collapsed in windstorms, see \cite[Table 1, p.13]{tac1},
which is commented by
\begin{center}
%\fbox%{%
\begin{minipage}{162mm}
{\em An examination of the British press for the 18 years between 1821 and 1839 shows it to be more replete with disastrous news of suspension bridges
troubles than Table 1 reveals, since some of these structures suffered from the wind several times during this period and a number of other
suspension bridges were damaged or destroyed as a result of overloading.}
\end{minipage}
%}
\end{center}

The story of bridges, suspended and not, contains many further dramatic events, an amazing amount of bridges had troubles
for different reasons such as the wind, the traffic loads, or macroscopic mistakes in the project, see e.g.\ \cite{hao,pearson}. Among them,
the most celebrated is certainly the Tacoma Narrows Bridge, collapsed in 1940 just a few months after its opening, both because of the impressive
video \cite{tacoma} and because of the large number of studies that it has inspired starting from the reports \cite{Tacoma1,bleich,tac1,tac3,tac4,tac2,tac5}.\par
Let us recall some observations made on the Tacoma collapse.
Since we were unable to find the Federal Report \cite{Tacoma1} that we repeatedly quote below, we refer to it by trusting the valuable
historical research by Scott \cite{wake} and by McKenna and coauthors, see in particular \cite{mck1,mckmonth,mck,mck4}. A good starting point to describe the
Tacoma collapse is... the Golden Gate Bridge, inaugurated a few years earlier, in 1937. This bridge is usually classified as
``very flexible'' although it is strongly stiffened by a thick girder, see Figure \ref{69}.
\begin{figure}[ht]
\begin{center}
{\includegraphics[height=39mm, width=78mm]{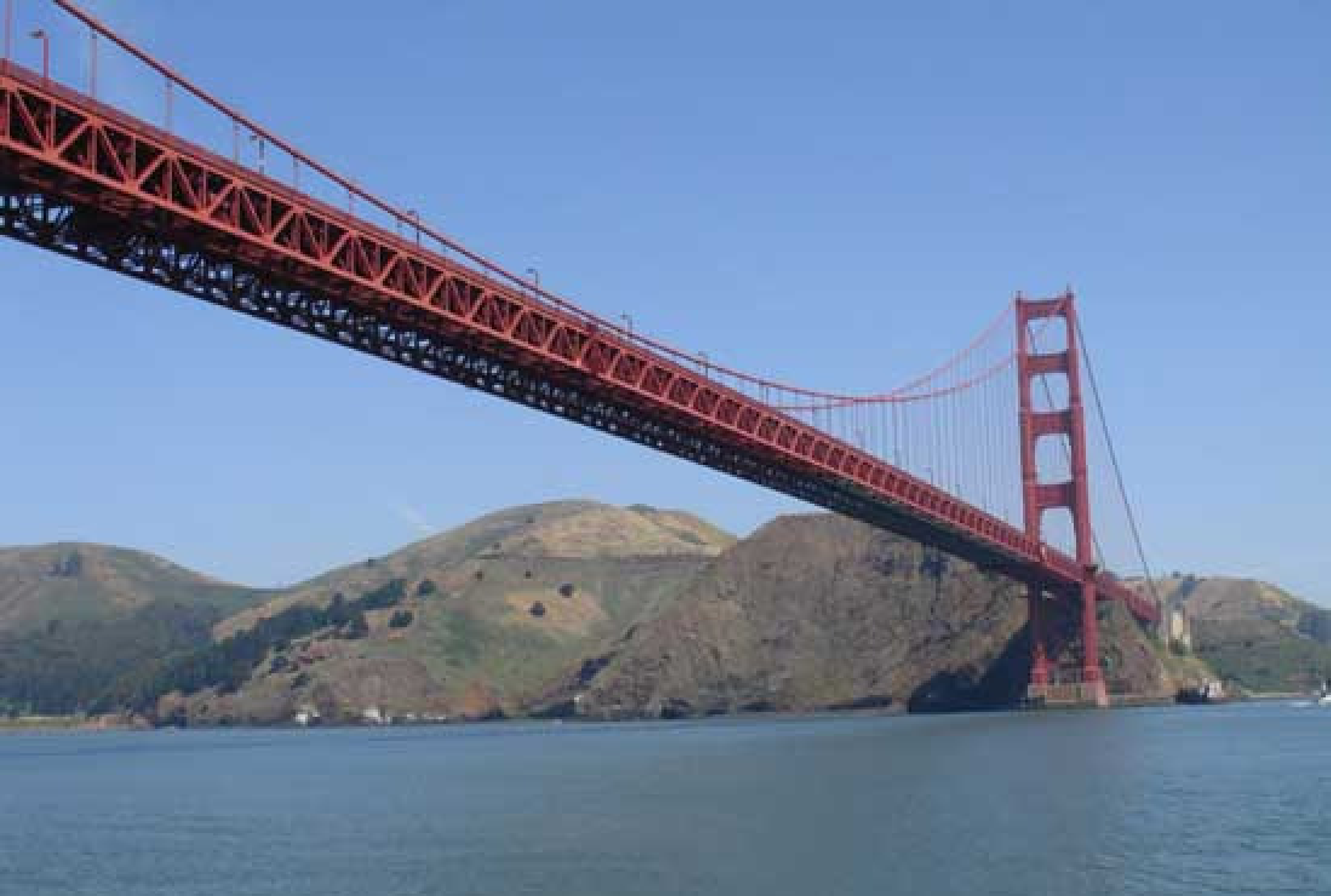}}\quad
{\includegraphics[height=39mm, width=78mm]{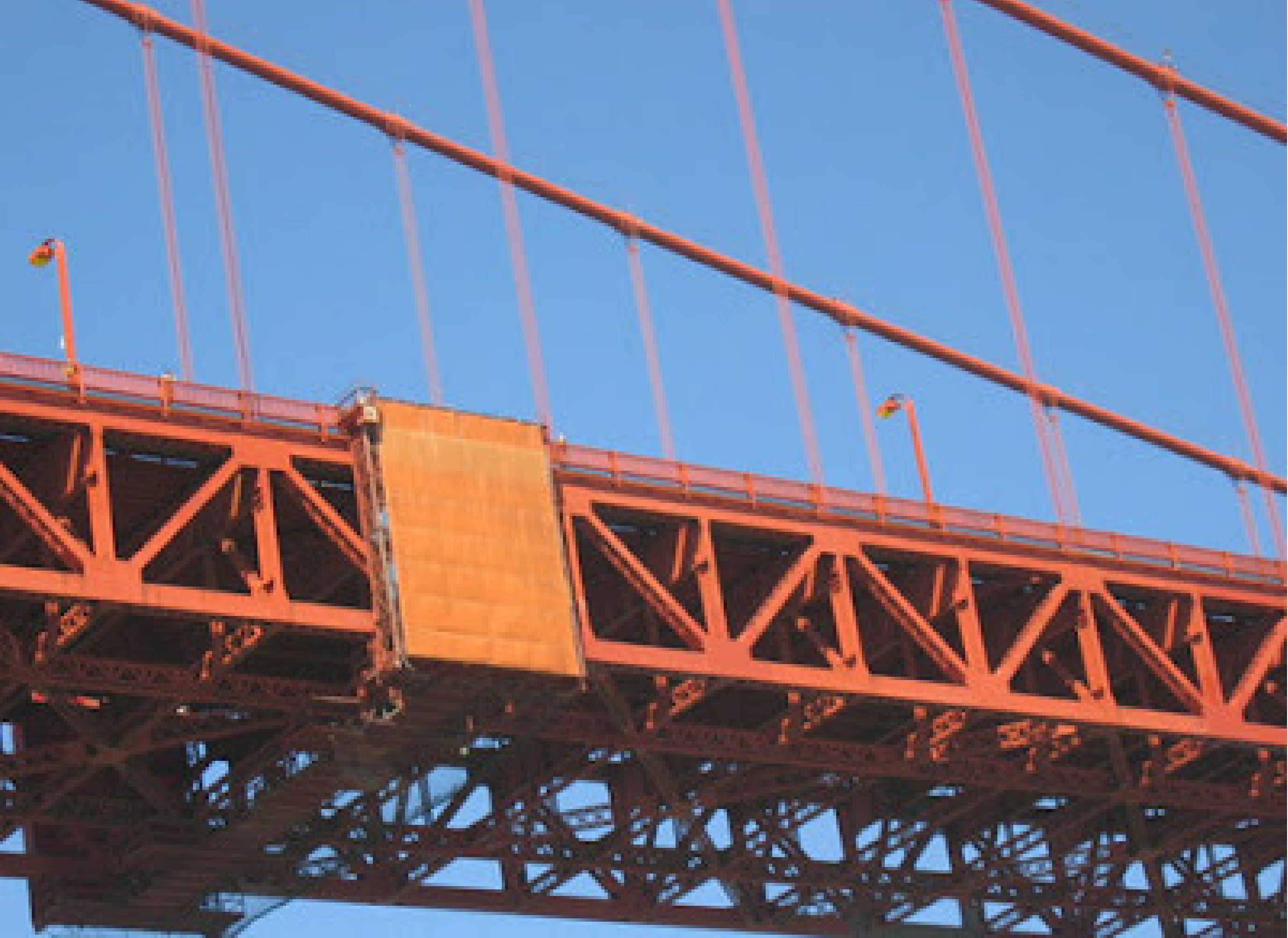}}
\caption{Girder at the Golden Gate Bridge.}\label{69}
\end{center}
\end{figure}
The original roadway was heavy and made with concrete; the weight was reduced in 1986 when a new roadway
was installed, see \cite{perks}. Nowadays, in spite of the girder, the bridge can swing more than an amazing 8 meters and
flex about 3 meters under big loads, which explains why the bridge is classified as very flexible. The huge mass
involved and these large distances from equilibrium explain why ${\cal LHL}$ certainly fails. Due to high winds
around 120 kilometers per hour, the Golden Gate Bridge has been closed, without suffering structural damage, only three times: in 1951, 1982, 1983,
always during the month of December. A further interesting phenomenon is the appearance of traveling waves in 1938:
in \cite[Appendix IX]{Tacoma1} (see also \cite{mck4}), the chief engineer of the Golden Gate Bridge writes
\begin{center}
%\fbox%{%
\begin{minipage}{162mm}
{\em ... I observed that the suspended structure of the bridge was undulating vertically in a wavelike motion of considerable amplitude ...}
\end{minipage}
%}
\end{center}
see also the related detailed description in \cite[Section 1]{mck4}. Hence, one should also expect traveling
waves in bridges, see the sketched representation in the first picture in Figure \ref{nuove}.
\begin{figure}[ht]
\begin{center}
{\includegraphics[height=32mm, width=72mm]{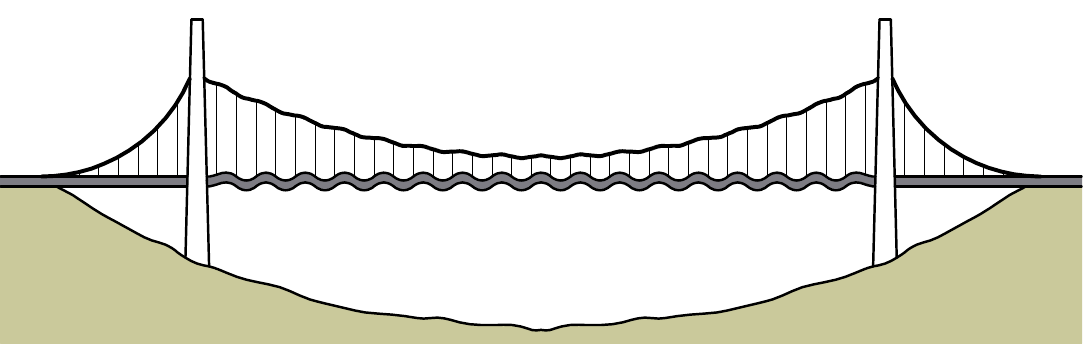}}\qquad\qquad{\includegraphics[height=32mm, width=72mm]{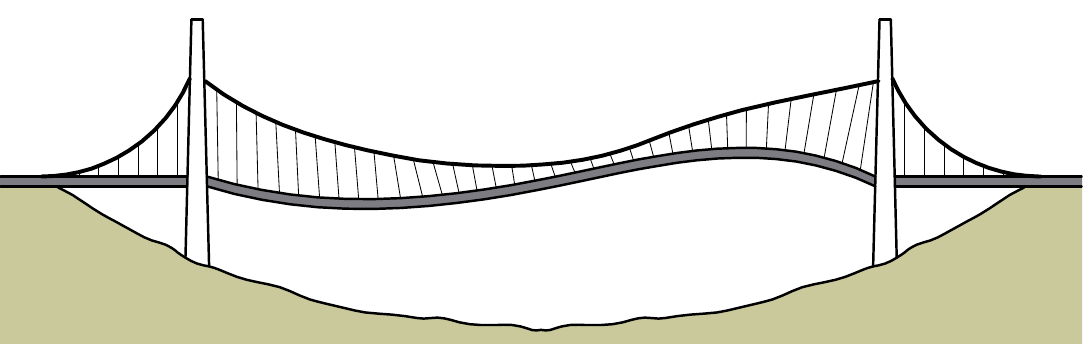}}
\caption{Traveling waves and torsional motion in bridges without girder.}\label{nuove}
\end{center}
\end{figure}
All this may occur also in apparently stiff structures. And in presence of extremely flexible structures, these traveling waves can generate
further dangerous phenomena such as torsional oscillations, see the second picture in Figure \ref{nuove}.\par
When comparing the structure of the Golden Gate Bridge with the one of the original Tacoma Narrows Bridge, one immediately sees a main macroscopic
difference: the thick girder sustaining the bridge, compare Figures \ref{69} and \ref{tacoma12}. The girder gives more
stiffness to the bridge; this is certainly the main reason why in the Golden Gate Bridge no torsional oscillation ever appeared. A further reason is
that larger widths of the roadway seem to prevent torsional oscillations, see \eq{speedflutter} below; from \cite[p.186]{rocard} we quote
\begin{center}
%\fbox%{%
\begin{minipage}{162mm}
{\em ... a bridge twice as wide will have exactly double the critical speed wind.}
\end{minipage}
%}
\end{center}

The Tacoma Bridge was rebuilt with a thick girder acting as a strong stiffening structure, see \cite{tac4}: as mentioned by Scanlan \cite[p.840]{scanlan},
\begin{center}
%\fbox%{%
\begin{minipage}{162mm}
{\em the original bridge was torsionally weak, while the replacement was torsionally stiff.}
\end{minipage}
%}
\end{center}
The replacement of the original bridge opened in 1950, see \cite{tac4} for some remarks on the project, and still stands today as the
westbound lanes of the present-day twin bridge complex, the eastbound lanes opened in 2007.
Figure \ref{tacoma12} - picture by Michael Goff, Oregon Department of Transportation, USA - shows the striking difference between the original
Tacoma Bridge collapsed in 1940 and the twin bridges as they are today.
\begin{figure}[ht]
\begin{center}
{\includegraphics[height=39mm, width=78mm]{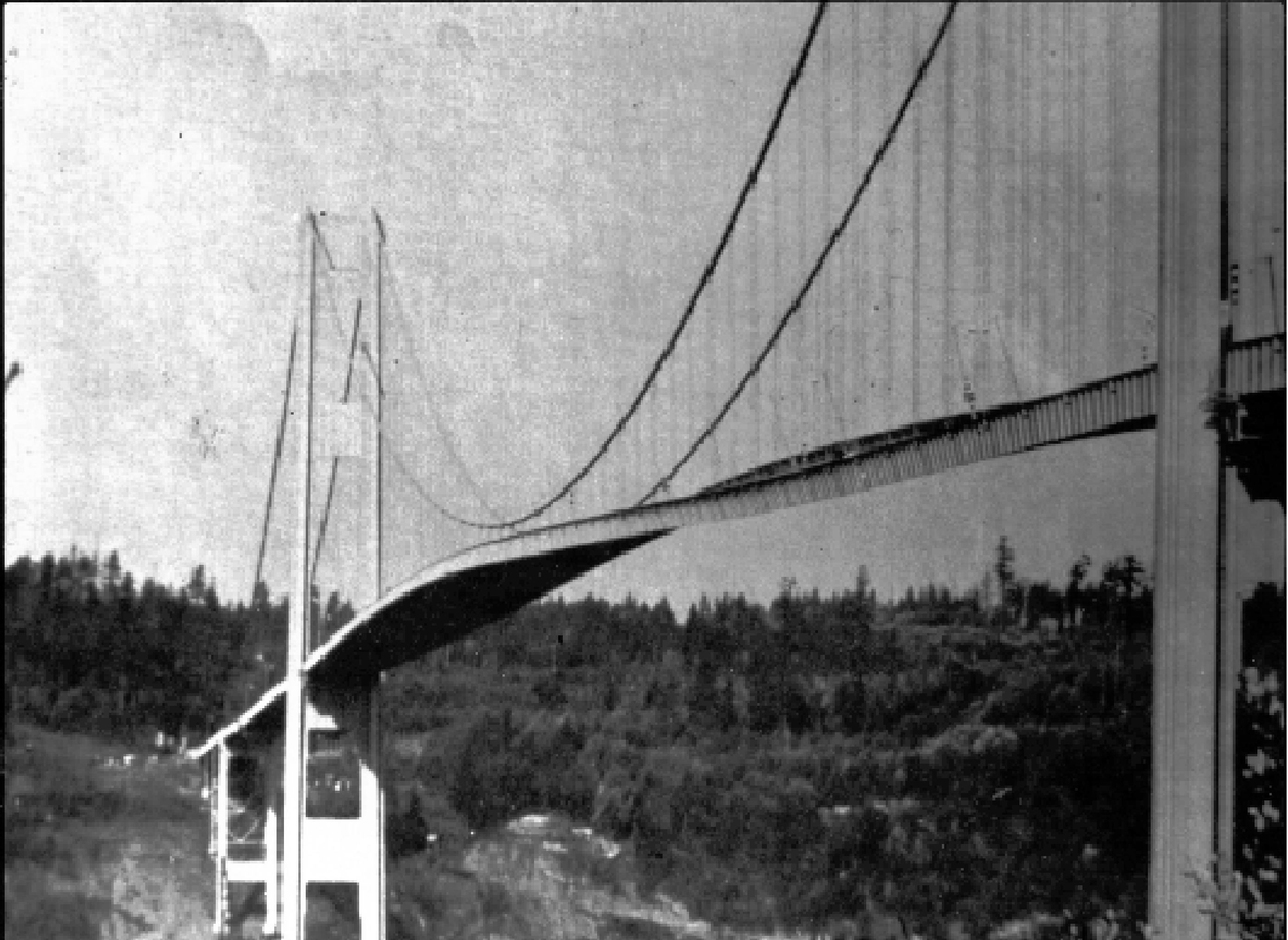}}\quad
{\includegraphics[height=39mm, width=78mm]{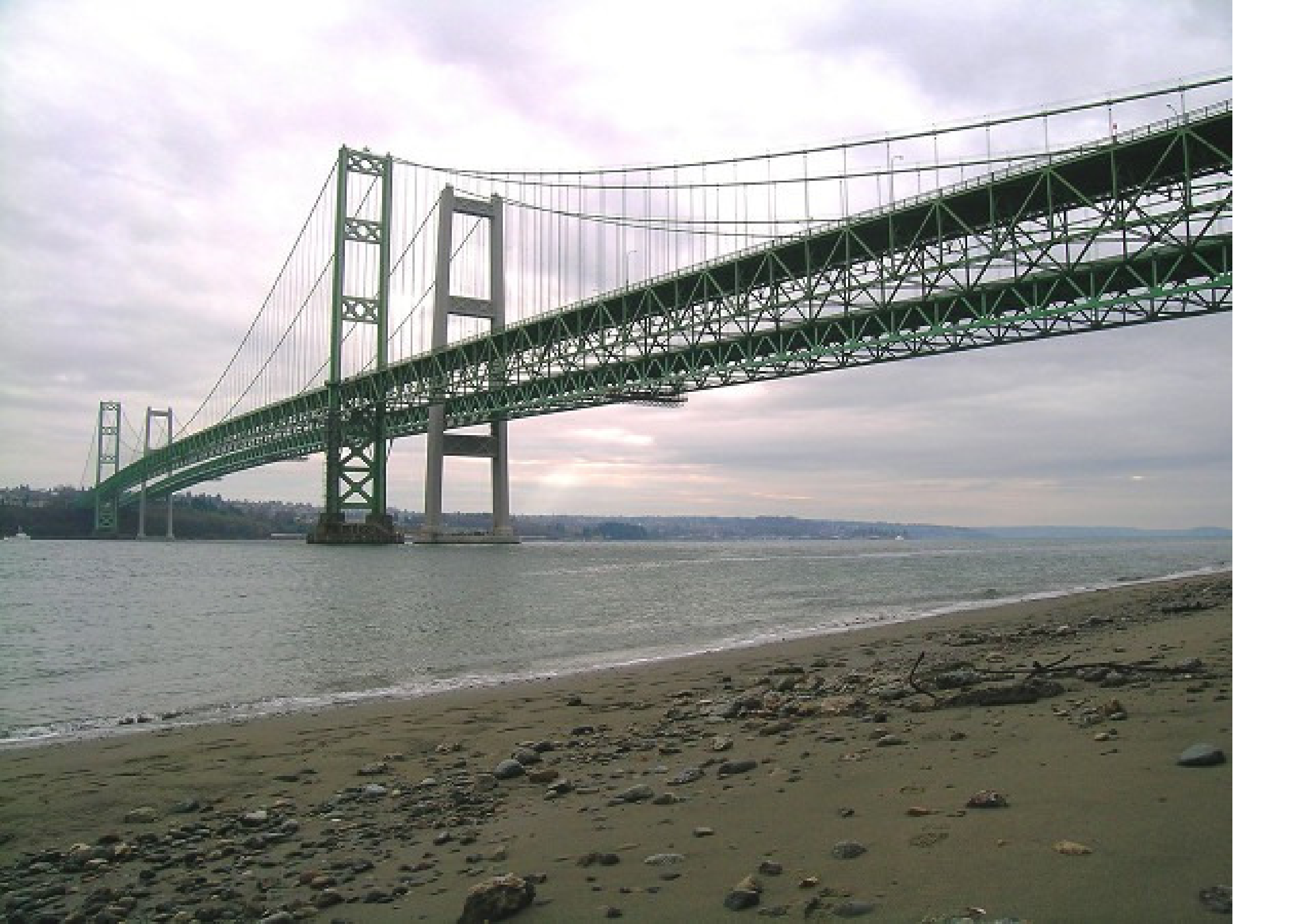}}
\caption{The collapsed Tacoma Bridge and the current twins Tacoma Bridges.}\label{tacoma12}
\end{center}
\end{figure}

Let us go back to the original Tacoma Bridge: even if it was extremely flexible, it is not clear why torsional oscillations appeared.
According to Scanlan \cite[p.841]{scanlan},
\begin{center}
%\fbox%{%
\begin{minipage}{162mm}
{\em ... some of the writings of von K\'arm\'an leave a trail of confusion on this point. ... it can clearly be shown that the rhythm of the failure
(torsion) mode has nothing to do with the natural rhythm of shed vortices following the K\'arm\'an vortex street pattern. ... Others have added to the confusion.
A recent mathematics text, for example, seeking an application for a developed theory of parametric resonance, attempts to explain the Tacoma
Narrows failure through this phenomenon.}
\end{minipage}
%}
\end{center}
Hence, Scanlan discards the possibility of the appearance of von K\'arm\'an vortices and raises doubts on the appearance of resonance which, indeed, is by now
also discarded. Of course, it is reasonable to expect resonance in presence of a single-mode solicitation, such as for the Broughton Bridge. But for the
Tacoma Bridge, Lazer-McKenna \cite[Section 1]{mck1} raise the question
\begin{center}
%\fbox%{%
\begin{minipage}{162mm}
{\em ... the phenomenon of linear resonance is very precise. Could it really be that such precise conditions existed in the middle of the Tacoma Narrows,
in an extremely powerful storm?}
\end{minipage}
%}
\end{center}

So, no plausible explanation is available nowadays. In a letter \cite{farq}, Prof.\ Farquharson claimed that
\begin{center}
%\fbox%{%
\begin{minipage}{162mm}
{\em ... a violent change in the motion was noted. This change appeared to take place without any intermediate stages and with such extreme violence ...
The motion, which a moment before had involved nine or ten waves, had shifted to two.}
\end{minipage}
%}
\end{center}
All this happened under not extremely strong winds, about 80km/h, and under a relatively high frequency of oscillation, about 36cpm, see \cite[p.23]{tac1}.
See \cite[Section 2.3]{mckmonth} for more details and for the conclusion that
\begin{center}
%\fbox%{%
\begin{minipage}{162mm}
{\em there is no consensus on what caused the sudden change to torsional motion.}
\end{minipage}
%}
\end{center}
This is confirmed by the following ambiguous comments taken from \cite[Appendix D]{bleich}:
\begin{center}
%\fbox%{%
\begin{minipage}{162mm}
{\em If vertical and torsional oscillations occur, they must be caused by vertical components of wind forces or by some structural action which derives vertical reactions from
a horizontally acting wind.}
\end{minipage}
%}
\end{center}
This part is continued in \cite{bleich} by stating that there exist references to both alternatives and that
\begin{center}
%\fbox%{%
\begin{minipage}{162mm}
{\em A few instrumental measurements have been made ... which showed the wind varying up to 8 degrees from the horizontal. Such variation from the
horizontal is not the only, and perhaps not the principal source of vertical wind force on a structure.}
\end{minipage}
%}
\end{center}

Besides the lack of consensus on the causes of the switch between vertical and torsional oscillations, all the above comments highlight a strong
instability of the oscillation motion as if, after reaching some critical energy threshold, an impulse (a Dirac delta) generated a new unexpected
motion. Refer to Section \ref{conclusions} for our own interpretation of this phenomenon which is described in \cite{Tacoma1,bleich} (see also
\cite[pp.50-51]{wake}) as:
\begin{center}
%\fbox%{%
\begin{minipage}{162mm}
{\em large vertical oscillations can rapidly change, almost instantaneously, to a torsional oscillation.}
\end{minipage}
%}
\end{center}
We do not completely agree with this description since a careful look at \cite{tacoma} shows that vertical oscillations continue also after
the appearance of torsional oscillations; in the video, one sees that at the beginning of the bridge the street-lamps oscillate in opposition of phase
when compared with the street-lamps at the end of the bridge. So, the phenomenon which occurs may be better described as follows:
\begin{center}
%\fbox%{%
\begin{minipage}{162mm}
{\bf large vertical oscillations can rapidly create, almost instantaneously, additional torsional oscillations.}
\end{minipage}
%}
\end{center}
Roughly speaking, we believe that part of the energy responsible of vertical oscillations switches to another energy which generates torsional oscillations;
the switch occurs without intermediate stages as if an impulse was responsible of it. Our own explanation to this fact is that
\begin{center}
%\fbox%{%
\begin{minipage}{162mm}
{\bf since vertical oscillations cannot be continued too far downwards below the equilibrium position due to the hangers,
when the bridge reaches some limit horizontal position with large kinetic energy, part of the energy transforms
into elastic energy and generates a crossing wave, namely a torsional oscillation.}
\end{minipage}
%}
\end{center}
We make this explanation more precise in Section \ref{energybalance}, after some further observations. In order to explain the ``switch of
oscillations'' several mathematical models were suggested in literature. In next section we survey some of these models which are quite
different from each other although they have some common features.\par
The Deer Isle Bridge, see Figure \ref{deer},
\begin{figure}[ht]
\begin{center}
{\includegraphics[height=39mm, width=78mm]{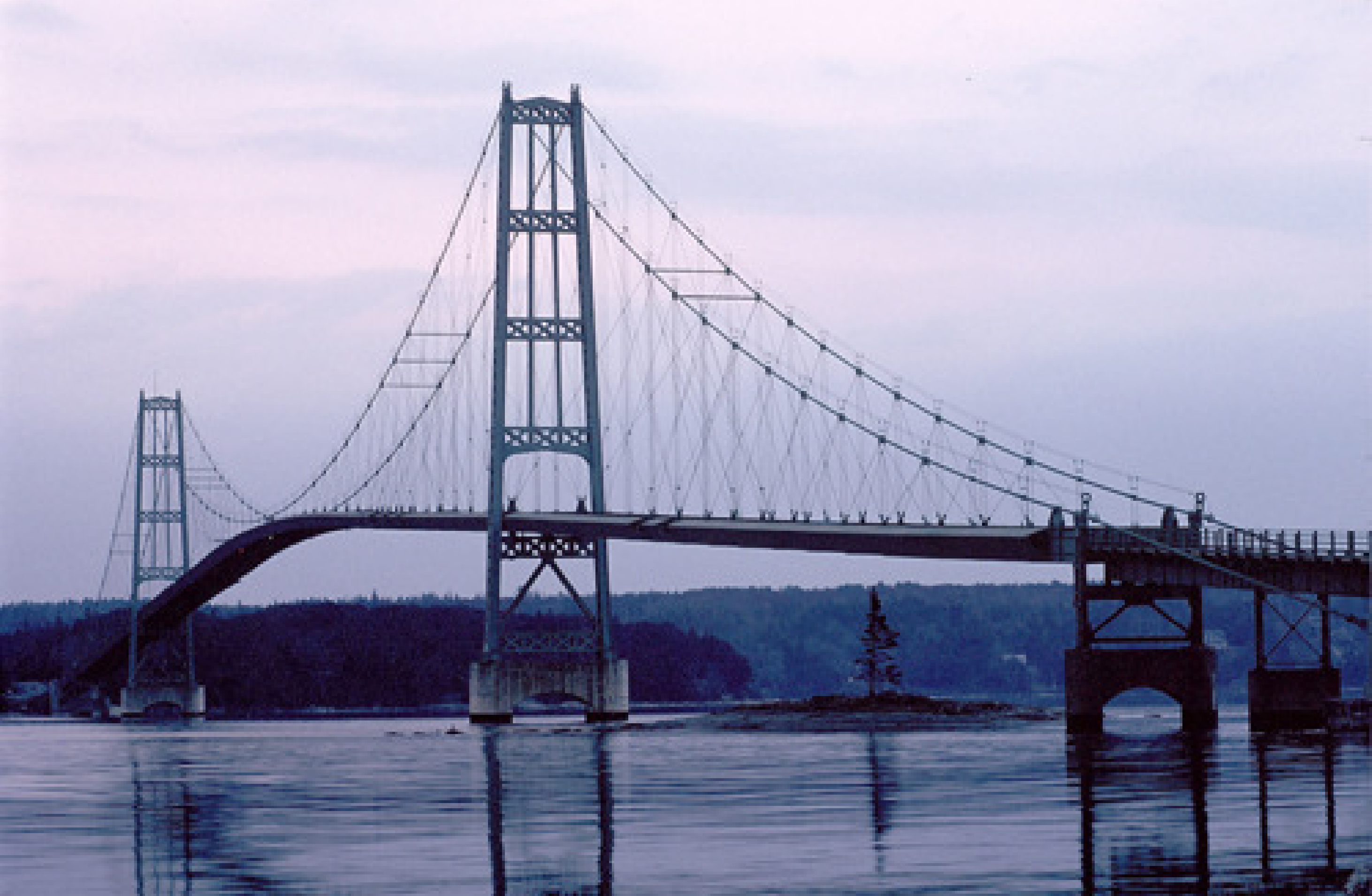}}\quad {\includegraphics[height=39mm, width=78mm]{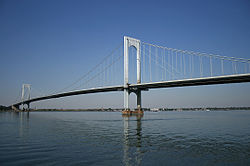}}
\caption{The Deer Isle Bridge (left) and the Bronx-Whitestone Bridge (right).}\label{deer}
\end{center}
\end{figure}
is a suspension bridge in the state of Maine (USA) which encountered wind stability problems similar to those of the original Tacoma Bridge.
Before the bridge was finished, in 1939, the wind induced motion in the relatively lightweight roadway. Diagonal stays running from the sustaining cables to
the stiffening girders on both towers were added to stabilize the bridge. Nevertheless, the oscillations of the roadway during some windstorms in 1942 caused
extensive damage and destroyed some of the stays. At that time everybody had the collapse of the Tacoma Bridge in mind, so that stronger and more extensive
longitudinal and transverse diagonal stays were added. In her report \cite{moran}, Barbara Moran wrote
\begin{center}
%\fbox%{%
\begin{minipage}{162mm}
{\em The Deer Isle Bridge was built at the same time as the Tacoma Narrows, and with virtually the same design. One difference: it still stands.}
\end{minipage}
%}
\end{center}
This shows strong instability: even if two bridges are considered similar they can react differently to external solicitations. Of course, much depends
on what is meant by ``virtually similar''...\par
The Bronx-Whitestone Bridge displayed in Figure \ref{deer}, was built in New York in 1939 and has shown an intermitted tendency to mild vertical motion from the time
the floor system was installed. The reported motions have never been very large, but were noticeable to the traveling public. Several successive steps were
taken to stabilise the structure, see \cite{anon}. Midspan diagonal stays and friction dampers at the towers were first installed; these were later
supplemented by diagonal stayropes from the tower tops to the roadway level. However, even these devices were not entirely adequate and in 1946 the
roadway was stiffened by the addition of truss members mounted above the original plate girders, the latter becoming the lower chords of the trusses
\cite{ammann,pavlo}. This is a typical example of bridge built without considering all the possible external effects, subsequently damped by means of
several unnatural additional components. Our own criticism is that
\begin{center}
\begin{minipage}{162mm}
{\bf instead of just solving the problem, one should understand the problem.}
\end{minipage}
\end{center}
And precisely in order to understand the problem, we described above those events which displayed the pure elastic behavior of bridges.
These were mostly suspension bridges without girders and were free to oscillate. This is a good reason why the Tacoma collapse should be
further studied for deeper knowledge: it displays the pure motion without stiffening constraints which hide the elastic features of bridges.\par
The Tacoma Bridge collapse is just the most celebrated and dramatic evidence of oscillating bridge but bridges oscillations are still
not well understood nowadays. On May 2010, the Russian authorities closed the Volgograd Bridge to all motor traffic due to its
strong vertical oscillations (traveling waves) caused by windy conditions, see \cite{volgograd} for the BBC report and video. Once more, these
oscillations may appear surprising since the Volgograd Bridge is a concrete girder bridge and its stiffness should prevent oscillations. However, it seems
that strong water currents in the Volga river loosened one of the bridge's vertical supports so that the stiffening effect due to the concrete
support was lost and the behavior became more similar to that of a suspension bridge. The bridge remained closed while it was inspected for damage.
As soon as the original effect was restored the bridge reopened for public access. In Figure \ref{volgabridge} the reader finds
pictures of the bridge and of the damped sustaining support.
\begin{figure}[ht]
\begin{center}
{\includegraphics[height=39mm, width=78mm]{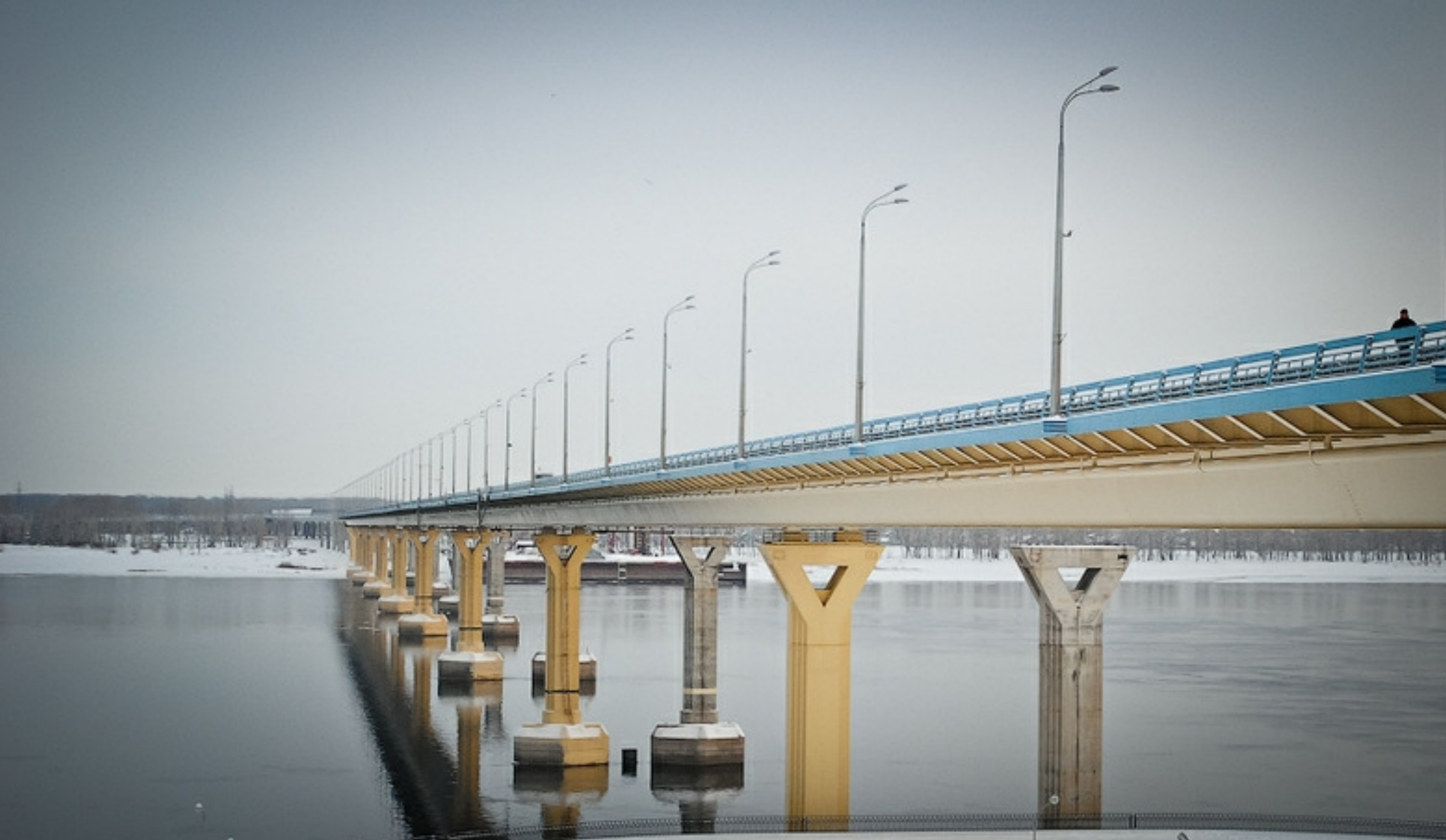}}\quad{\includegraphics[height=39mm, width=78mm]{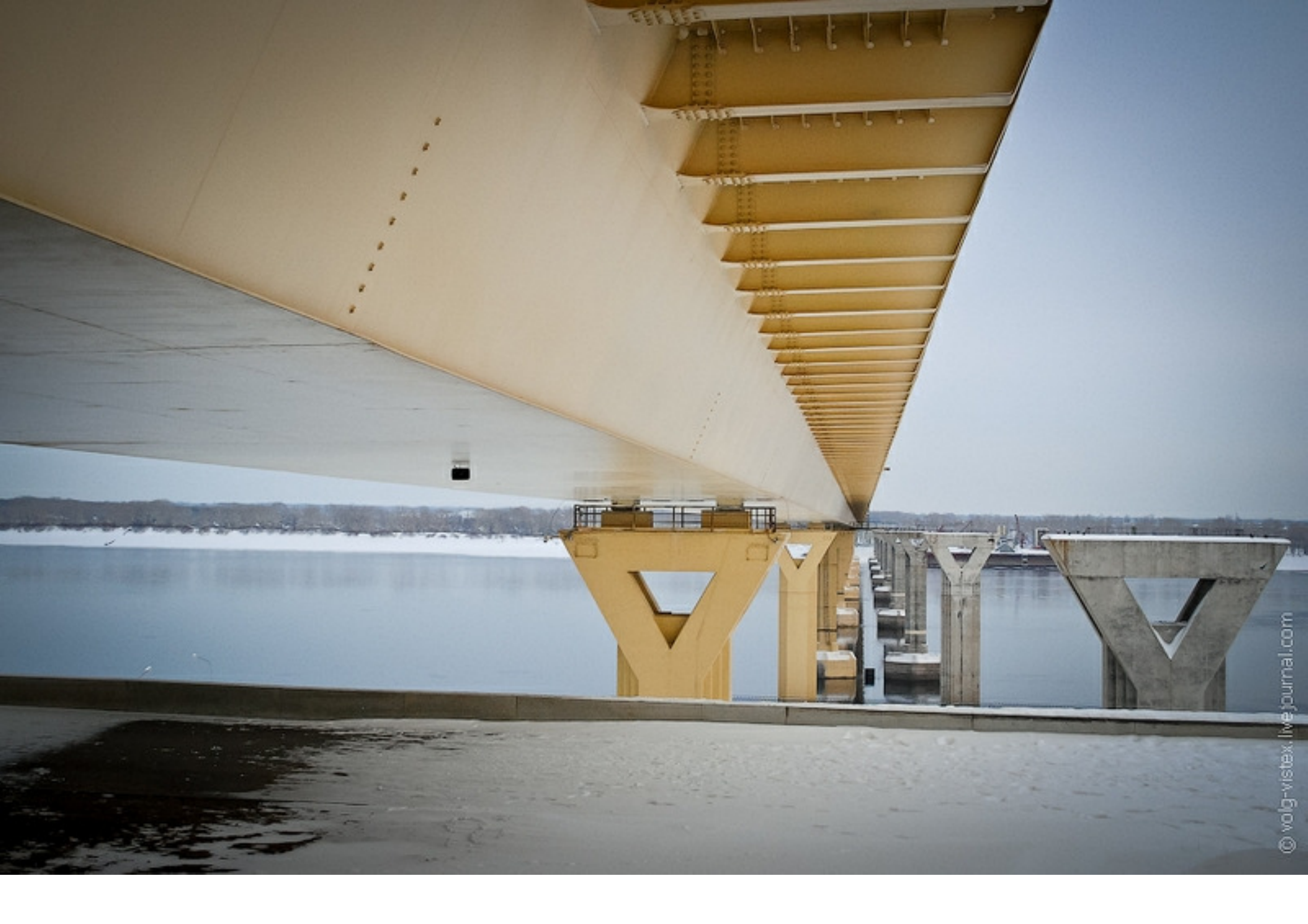}}
\caption{The Volgograd Bridge.}\label{volgabridge}
\end{center}
\end{figure}
These pictures are taken from \cite{volgobridge}, where one can also find full details on the damping system of the bridge. The Volgograd Bridge well
shows how oscillation induced fatigue of the structural members of bridges is a major factor limiting the life of the bridge. In \cite{kawada} one may find
a mathematical analysis and wind tunnel tests for examining oscillations which occur under ``constant low wind'', rather than under violent windstorms:
\begin{center}
%\fbox%{%
\begin{minipage}{162mm}
{\em Limited oscillation could even cause a collapse of light suspension bridges in a reasonably short time.}
\end{minipage}
%}
\end{center}

As already observed, the wind is not the only possible external source which generates bridges oscillations which also appear in pedestrian bridges where
lateral swaying is the counterpart of torsional oscillation.
In June 2000, the very same day when the London Millennium Bridge opened and the crowd streamed on it, the bridge started to sway from side
to side, see \cite{london}. Many pedestrians fell spontaneously into step with the vibrations, thereby amplifying them. According to Sanderson
\cite{sanderson}, the bridge wobble was due to the way people balanced themselves, rather than the timing of their steps. Therefore, the pedestrians
acted as negative dampers, adding energy to the bridge's natural sway. Macdonald \cite[p.1056]{macdonald} explains this phenomenon by writing
\begin{center}
%\fbox%{%
\begin{minipage}{162mm}
{\em ... above a certain critical number of pedestrians, this
negative damping overcomes the positive structural damping, causing the onset of exponentially increasing vibrations.}
\end{minipage}
%}
\end{center}
Although we have some doubts about the real meaning of ``exponentially increasing vibrations'' we have no doubts that this description corresponds to a superlinear
behavior which has also been observed in several further pedestrian bridges, see \cite{franck} and \cite{zivanovic} from which we quote
\begin{center}
%\fbox%{%
\begin{minipage}{162mm}
{\em ... damping usually increases with increasing vibration magnitude due to engagement of additional damping mechanisms.}
\end{minipage}
%}
\end{center}
The Millennium Bridge was made secure by adding some stiffening trusses below the girder, see Figure \ref{LMB} (Photo $\copyright$ Peter Visontay).
\begin{figure}[ht]
\begin{center}
{\includegraphics[height=39mm, width=78mm]{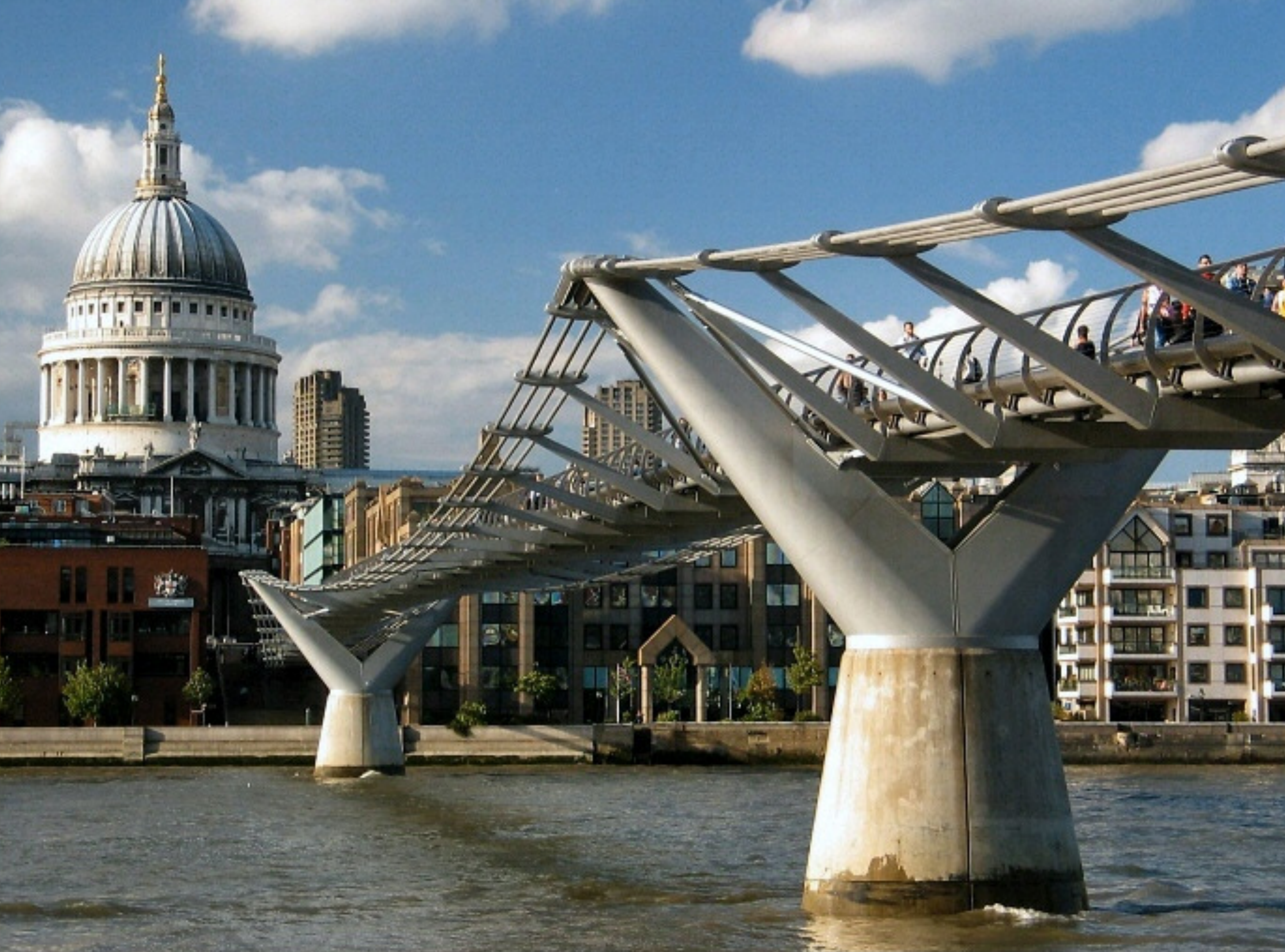}}
\caption{The London Millennium Bridge.}\label{LMB}
\end{center}
\end{figure}
The mathematical explanation of this solution
is that trusses lessen swaying and force the bridge to remain closer to its equilibrium position, that is, closer to a linear behavior
as described by ${\cal LHL}$. Although trusses delay the appearance of the superlinear behavior, they do not solve completely the problem
as one may wonder what would happen if 10.000 elephants would simultaneously walk through the Millennium Bridge... In this respect, let us quote
from \cite[p.13]{tac1} a comment on suspension bridges strengthened by stiffening girders:
\begin{center}
%\fbox%{%
\begin{minipage}{162mm}
{\em That significant motions have not been recorded on most of these bridges is conceivably due to the fact that they have never been subjected
to optimum winds for a sufficient period of time.}
\end{minipage}
%}
\end{center}

Another pedestrian bridge, the Assago Bridge in Milan (310m long), had a similar problem. In February 2011, just after a concert the
publics crossed the bridge and, suddenly, swaying became so violent that people could hardly stand, see \cite{fazzo} and \cite{assago}.
Even worse was the subsequent panic effect when the crowd started running in order to escape from a possible
collapse; this amplified swaying but, quite luckily, nobody was injured. In this case, the project did not take
into account that a large number of people would go through the bridge just after the events; when swaying started there were about 1.200
pedestrians on the footbridge. This problem was solved by adding positive dampers, see \cite{stella}.\par
According to \cite{bridgefailure}, around 400 recorded bridges failed for several different reasons and the ones who failed after year 2000 are more
than 70. Probably, some years after publication of this paper, these numbers will have increased considerably... The database \cite{bridgefailure}
consists mainly of brief descriptions and statistics for each bridge failure: location, number of fatalities/injuries, etc.
rather than in-depth analysis of the cause of the failure for which we refer to the nice book by Akesson \cite{akesson}.\par
As we have seen, the reasons of failures are of different kinds.
Firstly, strong and/or continued winds: these may cause wide vertical oscillations which may switch to different kinds of oscillations. Especially
for suspension bridges the latter phenomenon appears quite evident, due to the many elastic components (cables, hangers, towers, etc.) which appear in it.
A second cause are traffic loads, such as some precise resonance phenomenon, or some unpredictable synchronised behavior, or some unexpected
huge load; these problems are quite common in many different kinds of bridges. Finally, a third cause are mistakes in the project; these are both
theoretical, for instance assuming ${\cal LHL}$, and practical, such as wrong assumptions on the possible maximum external actions.\par
After describing so many disasters, we suggest a joke which may sound as a provocation. Since many bridges projects did not forecast oscillations
it could be more safe to build old-fashioned rock bridges, such as the Roman aqueduct built in Segovia (Spain) during the first century and still
in perfect shape and in use. Of course,
we are not suggesting here to replace the Golden Gate Bridge with a Roman-style bridge! But we do suggest to plan bridges by taking into account
all possible kinds of solicitations. Moreover, we suggest not to hide unsolved problems with some unnatural solutions such as stiff
and heavy girders or more extensive longitudinal and transverse diagonal stays, see Section \ref{howplan} for more suggestions.\par\medskip
Throughout this section we listed a number of historical events about bridges. They taught us the following facts:\par
1. Self-excited oscillations appear in bridges. Often this is somehow unexpected since the project does not take into account
several external strong and/or prolonged effects. And even if expected, oscillations can be much wider than estimated.\par
2. Oscillations can be extenuated by stiffening the structure or by adding positive (and heavy, and expensive) dampers to the structure. However, none
of these solutions can completely prevent oscillations, especially in presence of highly unfavorable events such as strong
and prolonged winds, not necessarily hurricanes, or heavy and synchronised traffic loads. Due to the unnatural stiffness of
the structure, trusses and dampers may cause cracks, see \cite{crack} and references therein; but we leave this problem to engineers... see \cite{kawada2}.\par
3. The oscillations are amplified by an observable superlinear effect. More the bridge is far from its equilibrium
position, more the impact of external forces is relevant. It is by now well understood that suspension bridges behave
nonlinearly, see e.g.\ \cite{brown,lacarbonara2}.\par
4. In extremely flexible bridges, such as the Tacoma Bridge which had no stiffening truss, vertical oscillations can partially switch to torsional oscillations
and even to more complicated oscillations, see the pictures in Figure \ref{kable} which are taken from \cite[p.143]{cable}
\begin{figure}[ht]
\begin{center}
{\includegraphics[height=26mm, width=52mm]{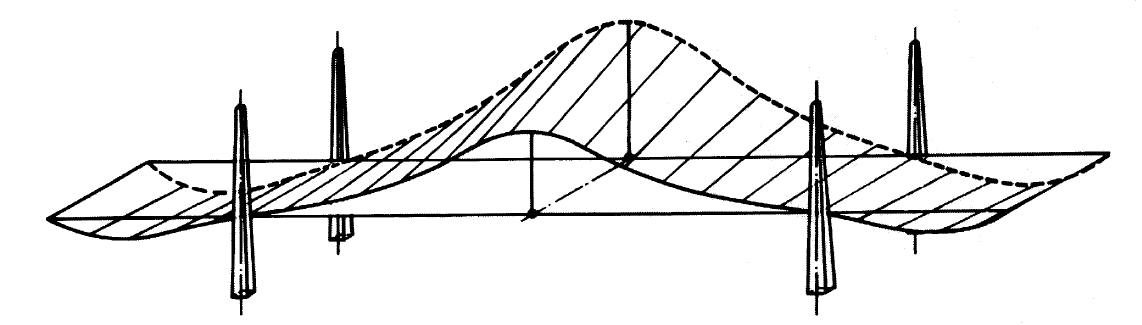}}\quad{\includegraphics[height=26mm, width=52mm]{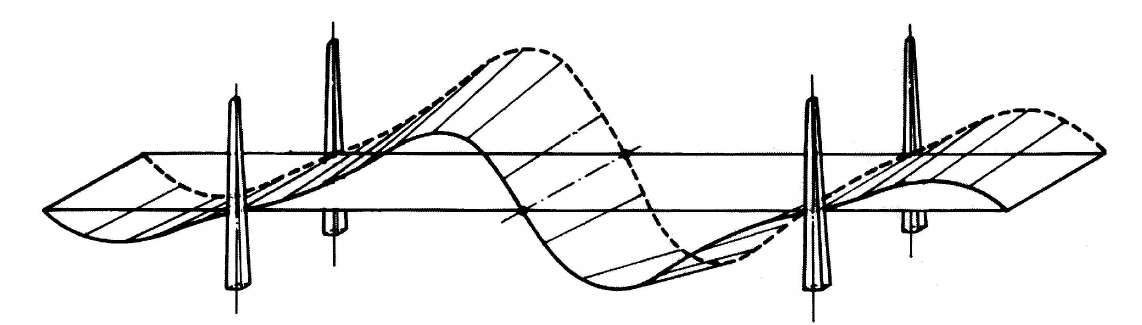}}
\quad{\includegraphics[height=26mm, width=52mm]{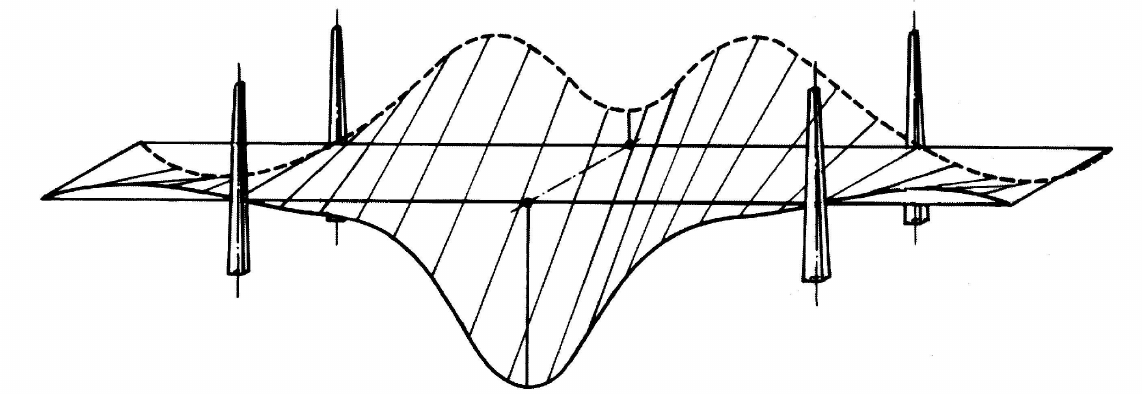}}\\
{\includegraphics[height=26mm, width=52mm]{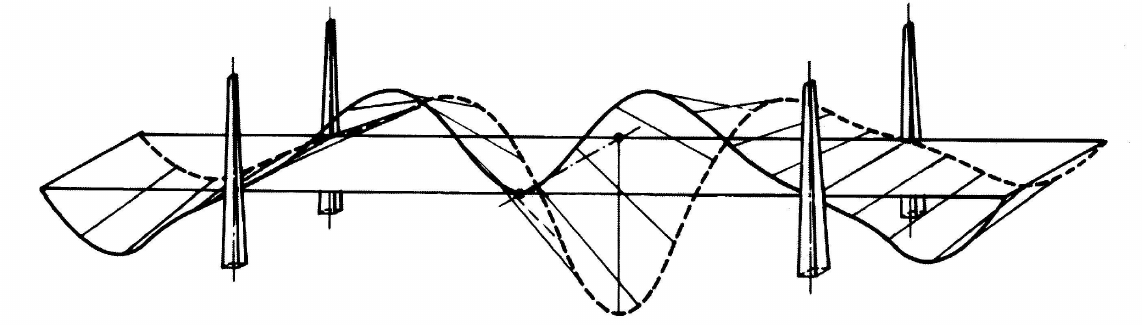}}\quad{\includegraphics[height=26mm, width=52mm]{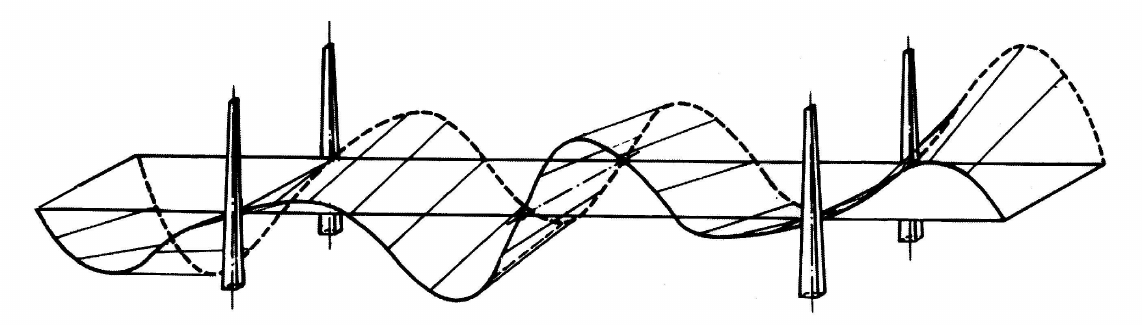}}
\quad{\includegraphics[height=26mm, width=52mm]{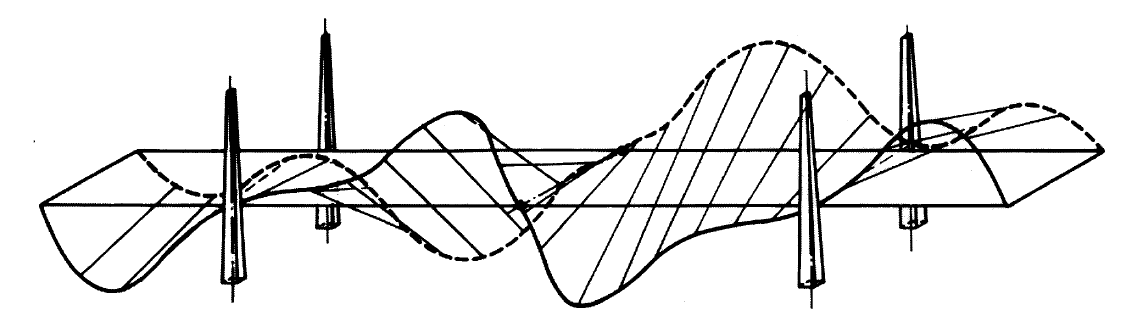}}
\caption{Combined oscillations of a bridge roadway.}\label{kable}
\end{center}
\end{figure}
and see also \cite[pp.94-95]{rocard} and Figure \ref{ghaffer} below.
This occurs when vertical oscillations become too large and, in such case, vertical and torsional oscillations coexist. Up to nowadays,
there is no convincing explanation why the switch occurs.

\section{How to model bridges}\label{howto}

The amazing number of failures described in the previous section shows that the existing theories and models are not adequate to describe the statics and the dynamics
of oscillating bridges. In this section we survey different points of view, different models, and we underline their main weaknesses. We also suggest how to modify them
in order to fulfill the requirements of (GP).

\subsection{A quick overview on elasticity: from linear to semilinear models}\label{elasticity}

A quite natural way to describe the bridge roadway is to view it as a thin rectangular plate. This is also the opinion of Rocard \cite[p.150]{rocard}:
\begin{center}
%\fbox%{%
\begin{minipage}{162mm}
{\em The plate as a model is perfectly correct and corresponds mechanically to a vibrating suspension bridge...}
\end{minipage}
%}
\end{center}

In this case, a commonly adopted theory is the linear one by Kirchhoff-Love \cite{kirchhoff,love}, see also \cite[Section 1.1.2]{gazgruswe}, which we briefly recall.
The bending energy of a plate involves curvatures of the surface. Let $\kappa_1$, $\kappa_2$ denote the principal curvatures of the graph
of a smooth function $u$ representing the deformation of the plate, then a simple model for the bending energy of the deformed plate $\Omega$ is
\neweq{curva}
\mathbb{E}(u)=\int_\Omega\left(\frac{\kappa_1^2}{2}+\frac{\kappa_2^2}{2}+\sigma\kappa_1\kappa_2\right)\, dx_1dx_2
\endeq
where $\sigma$ denotes the Poisson ratio defined by $\sigma=\frac{\lambda}{2\left(\lambda +\mu \right)}$ with the so-called Lam\'e constants $\lambda,\mu $ that depend
on the material. For physical reasons it holds that $\mu >0$ and usually $\lambda \geq 0$ so that $0\le\sigma<\frac{1}{2}$.
In the linear theory of elastic plates, for small deformations $u$ the terms in \eq{curva} are considered to be purely quadratic with respect to the second order
derivatives of $u$. More precisely, for small deformations $u$, one has
$$(\kappa_1+\kappa_2)^2\approx(\Delta u)^2\ ,\quad\kappa_1\kappa_2\approx\det(D^2u)=(u_{x_1x_1}u_{x_2x_2}-u_{x_1x_2}^{2})\ ,$$
and therefore
$$\frac{\kappa_1^2}{2}+\frac{\kappa_2^2}{2}+\sigma\kappa_1\kappa_2\approx\frac{1}{2}(\Delta u)^2+(\sigma-1)\det(D^2u).$$
Then \eq{curva} yields
\neweq{energy-gs}
\mathbb{E}(u)=\int_{\Omega }\left(\frac{1}{2}\left( \Delta u\right) ^{2}+(\sigma-1)\det(D^2u)\right) \, dx_1dx_2\, .
\endeq
Note that for $-1<\sigma<1$ the functional $\mathbb{E}$ is coercive and convex.
This modern variational formulation appears in \cite{Friedrichs}, while a discussion for a boundary value problem for a thin elastic plate in a somehow old
fashioned notation is made by Kirchhoff \cite{kirchhoff}. And precisely the choice of the boundary conditions is quite delicate since it depends on
the physical model considered.\par
Destuynder-Salaun \cite[Section I.2]{destuyndersalaun} describe this modeling by
\begin{center}
%\fbox%{%
\begin{minipage}{162mm}
{\em ... Kirchhoff and Love have suggested to assimilate the plate to a collection of small pieces, each one being articulated with respect
to the other and having a rigid-body behavior. It looks like these articulated wooden snakes that children have as toys. Hence the transverse shear
strain remains zero, while the planar deformation is due to the articulation between small blocks. But this simplified description of a plate movement
can be acceptable only if the components of the stress field can be considered to be negligible.}
\end{minipage}
%}
\end{center}

The above comment says that ${\cal LHL}$ should not be adopted if the components of the stress field are not negligible.
An attempt to deal with large deflections for thin plates is made by Mansfield \cite[Chapters 8-9]{mansfield}. He first considers
approximate methods, then with three classes of asymptotic plate theories: membrane theory, tension field theory, inextensional theory.
Roughly speaking, the three theories may be adopted according to the ratio between the thickness of the plate and the typical planar dimension:
for the first two theories the ratio should be less than $10^{-3}$, whereas for the third theory it should be less than $10^{-2}$.
Since a roadway has a length of the order of 1km, the width of the order of 10m, even for the less stringent inextensional theory the thickness
of the roadway should be less than 10cm which, of course, appears unreasonable. Once more, this means that ${\cal LHL}$ should not be adopted
in bridges. In any case, Mansfield \cite[p.183]{mansfield} writes
\begin{center}
%\fbox%{%
\begin{minipage}{162mm}
{\em The exact large-deflection analysis of plates generally presents considerable difficulties...}
\end{minipage}
%}
\end{center}

Destuynder-Salaun \cite[Section I.2]{destuyndersalaun} also revisit an alternative model due to Naghdi \cite{naghdi} by using a mixed variational
formulation. They refer to \cite{mindlin,reissner1,reissner2} for further details and modifications, and conclude by saying that none between the
Kirchhoff-Love model or one of these alternative models is always better than the others. Moreover, also the definition of the transverse
shear energy is not universally accepted: from \cite[p.149]{destuyndersalaun}, we quote
\begin{center}
%\fbox%{%
\begin{minipage}{162mm}
{\em ... this discussion has been at the origin of a very large number of papers from both mathematicians and engineers. But to our best knowledge,
a convincing justification concerning which one of the two expressions is the more suitable for numerical purpose, has never been formulated in a
convincing manner. This question is nevertheless a fundamental one ...}
\end{minipage}
%}
\end{center}

It is clear that a crucial role is played by the word ``thin''. What is a thin plate? Which width is it allowed to have? If we assume that the
width is zero, a quite unrealistic assumption for bridges, a celebrated two-dimensional equation was suggested by von K\'arm\'an \cite{karman}.
This equation has been widely, and satisfactorily, studied from several mathematical points of view such as existence, regularity, eigenvalue problems,
semilinear versions, see e.g.\ \cite{gazgruswe} for a survey of results. On the other hand, quite often several doubts have been raised on their
physical soundness. For instance, Truesdell \cite[pp.601-602]{truesdell} writes
\begin{center}
%\fbox%{%
\begin{minipage}{162mm}
{\em Being unable to explain just why the von K\'arm\'an theory has always made me feel a little nauseated as well as very slow and stupid, I asked
an expert, Mr. Antman, what was wrong with it. I can do no better than paraphrase what he told me: it relies upon\par
1) ``approximate geometry'', the validity of which is assessable only in terms of some other theory.\par
2) assumptions about the way the stress varies over a cross-section, assumptions that could be justified only in terms of some other theory.\par
3) commitment to some specific linear constitutive relation - linear, that is, in some special measure of strain, while such approximate linearity
should be outcome, not the basis, of a theory.\par
4) neglect of some components of strain - again, something that should be proved mathematically from an overriding, self-consistent theory.\par
5) an apparent confusion of the referential and spatial descriptions - a confusion that is easily justified for the classical linearised
elasticity but here is carried over unquestioned, in contrast with all recent studies of the elasticity of finite deformations.}
\end{minipage}
%}
\end{center}

Truesdell then concludes with a quite eloquent comment:
\begin{center}
%\fbox%{%
\begin{minipage}{162mm}
{\em These objections do not prove that anything is wrong with von K\'arm\'an strange theory. They merely suggest that it would be difficult to prove
that there is anything right about it.}
\end{minipage}
%}
\end{center}
Let us invite the interested reader to have a careful look at the paper by Truesdell \cite{truesdell}; it contains several criticisms
exposed in a highly ironic and exhilarating fashion and, hence, very effective.\par
Classical books for elasticity theory are due to Love \cite{love}, Timoshenko \cite{timoshenko}, Ciarlet \cite{ciarletbook}, Villaggio \cite{villaggio},
see also \cite{nadai,naghdi,timoshenkoplate} for the theory of plates.
Let us also point out a celebrated work by Ball \cite{ball} who was the first analyst to approach the real 3D boundary value problems for
nonlinear elasticity. Further nice attempts to tackle nonlinear elasticity in particular situations were done by Antman \cite{antman1,antman2} who, however,
appears quite skeptic on the possibility to have a general theory:
\begin{center}
%\fbox%{%
\begin{minipage}{162mm}
{\em ... general three-dimensional nonlinear theories have so far proved to be mathematically intractable.}
\end{minipage}
%}
\end{center}

Summarising, what we have seen suggests to conclude this short review about plate models by claiming that classical modeling
of thin plates should be carefully revisited. This suggestion is absolutely not new. In this respect, let us quote a couple of sentences written
by Gurtin \cite{gurtin} about nonlinear elasticity:
\begin{center}
%\fbox%{%
\begin{minipage}{162mm}
{\em Our discussion demonstrates why this theory is far more difficult than most nonlinear theories of mathematical physics. It is hoped that these
notes will convince analysts that nonlinear elasticity is a fertile field in which to work.}
\end{minipage}
%}
\end{center}

Since the previously described Kirchhoff-Love model implicitly assumes ${\cal LHL}$, and since quasilinear equations appear too complicated in order to
give useful information, we intend to add some nonlinearity only in the source $f$ in order to have a semilinear equation, something which appears to be a
good compromise between too poor linear models and too complicated quasilinear models. This compromise is quite common in elasticity,
see e.g.\ \cite[p.322]{ciarletbook} which describes the method of asymptotic expansions for the thickness $\eps$ of a plate as a ``partial linearisation''
\begin{center}
%\fbox%{%
\begin{minipage}{162mm}
{\em ... in that a system of quasilinear partial differential equations, i.e., with nonlinearities in the higher order terms, is replaced as $\eps\to0$
by a system of semilinear partial differential equations, i.e., with nonlinearities only in the lower order terms.}
\end{minipage}
%}
\end{center}

In Section \ref{newmodel}, we suggest a new 2D mathematical model described by a semilinear fourth order wave equation.
Before doing this, in next section we survey some existing models and we suggest some possible variants based on the observations listed in Section \ref{story}.

\subsection{Equations modeling suspension bridges}\label{models}

Although it is oversimplified in several respects, the celebrated report by Navier \cite{navier2} has been for more than one century the only mathematical
treatise of suspension bridges. The second milestone contribution is certainly the monograph by Melan \cite{melan}. After the Tacoma collapse, the engineering
communities felt the necessity to find accurate equations in order to attempt explanations of what had occurred. In this respect, a first source is
certainly the work by Smith-Vincent \cite{tac2} which was written precisely {\em with special reference to the Tacoma Narrows Bridge}.
The bridge is modeled as a one dimensional beam, say the interval $(0,L)$, and in order to obtain an autonomous equation, Smith-Vincent consider
the function $\eta=\eta(x)$ representing the amplitude of the oscillation at the point $x\in(0,L)$. By linearising they obtain a
fourth order linear ODE \cite[(4.2)]{tac2} which can be integrated explicitly. We will not write this equation because we prefer to deal with
the function $v=v(x,t)$ representing the deflection at any point $x\in(0,L)$ and at time $t>0$; roughly speaking, $v(x,t)=\eta(x)\sin(\omega t)$
for some $\omega>0$. In this respect, a slightly better job was done in \cite{bleich} although this book was not very lucky since two of the
authors (McCullogh and Bleich) passed away during its preparation. Equation \cite[(2.7)]{bleich} is precisely \cite[(4.2)]{tac2}; but
\cite[(2.6)]{bleich} considers the deflection $v$ and reads
\neweq{primissima}
m\, v_{tt}+EI\, v_{xxxx}-H_w\, v_{xx}+\frac{w\, h}{H_w}=0\, ,\qquad x\in(0,L)\, ,\ t>0\, ,
\endeq
where $E$ and $I$ are, respectively, the elastic modulus and the moment of inertia of the stiffening girder so that $EI$ is the stiffness of the
girder; moreover, $m$ denotes the mass per unit length, $w=mg$ is the weight which produces a cable stress whose horizontal component is $H_w$,
and $h$ is the increase of $H_w$ as a result of the additional deflection $v$. In particular, this means that $h$ depends on $v$ although
\cite{bleich} does not emphasise this fact and considers $h$ as a constant.\par
An excellent source to derive the equation of vertical oscillations in suspension bridges is \cite[Chapter IV]{rocard} where all the details
are perfectly explained. The author, the French physicist Yves-Andr\'e Rocard (1903-1992), also helped to develop the atomic bomb for France.
Consider again that a long span bridge roadway is a beam of length $L>0$ and that it is oscillating; let $v(x,t)$ denote the vertical
component of the oscillation for $x\in(0,L)$ and $t>0$. The equation derived in \cite[p.132]{rocard} reads
\neweq{flutter}
m\, v_{tt}+EI\, v_{xxxx}-\big(H_w+\gamma v\big)\, v_{xx}+\frac{w\, \gamma}{H_w}v=f(x,t)\, ,\quad x\in(0,L)\, ,\ t>0,
\endeq
where $H_w$, $EI$ and $m$ are as in \eq{primissima}, $\gamma v$ is the variation $h$ of $H_w$ supposed to vary linearly with $v$, and $f$ is
an external forcing term. Note that a nonlinearity appears here in the term $\gamma v v_{xx}$. In fact, \eq{flutter} is closely related to
an equation suggested much earlier by Melan \cite[p.77]{melan} but it has not been subsequently attributed to him.

\begin{problem} {\em Study oscillations and possible blow up in finite time for traveling waves to \eq{flutter} having velocity $c>0$,
$v=v(x,t)=y(x-ct)$ for $x\in\R$ and $t>0$, in the cases where $f\equiv1$ is constant and where $f$ depends superlinearly on $v$.
Putting $\tau=x-ct$ one is led to find solutions to the ODE
$$
EI\, y''''(\tau)-\Big(\gamma y(\tau)+H_w-mc^2\Big)\, y''(\tau)+\frac{w\, \gamma}{H_w}y(\tau)=1\, ,\quad \tau\in\R\, .
$$
By letting $w(\tau)=y(\tau)-\frac{H_w}{w\, \gamma}$ and normalising some constants, we arrive at
\neweq{y4}
w''''(\tau)-\Big(\alpha w(\tau)+\beta\Big)\, w''(\tau)+w(\tau)=0\, ,\quad \tau\in\R\, ,
\endeq
for some $\alpha>0$ and $\beta\in\R$; we expect different behaviors depending on $\alpha$ and $\beta$. It would be interesting to see
if local solutions to \eq{y4} blow up in finite time with wide oscillations. Moreover, one should also consider the more general problem
$$w''''(\tau)-\Big(\alpha w(\tau)+\beta\Big)\, w''(\tau)+f(w(\tau))=0\, ,\quad \tau\in\R\, ,$$
with $f$ being superlinear, for instance $f(s)=s+\eps s^3$ with $\eps>0$ small. Incidentally, we note that such $f$ satisfies
\eq{f} and \eq{fmono}-\eq{f2} below.}\endproof\end{problem}

Let us also mention that Rocard \cite[pp.166-167]{rocard} studies the possibility of simultaneous excitation of different bending and torsional modes
and obtains a coupled system of linear equations of the kind of \eq{flutter}.
With few variants, equations \eq{primissima} and \eq{flutter} seem nowadays to be well-accepted among engineers, see e.g.\ \cite[Section VII.4]{aer};
moreover, quite similar equations are derived to describe related phenomena in
cable-stayed bridges \cite[(1)]{bruno} and in arch bridges traversed by high-speed trains \cite[(14)-(15)]{lacarbonara}.\par
Let $v(x,t)$ and $\theta(x,t)$ denote respectively the vertical and torsional components of the oscillation of the bridge, then the following system
is derived in \cite[(1)-(2)]{como} for the linearised equations of the elastic combined vertical-torsional oscillation motion:
\renewcommand{\arraystretch}{2.8}
\neweq{eqqq}
\left\{\begin{array}{l}
\displaystyle m\, v_{tt}+EI\, v_{xxxx}-H_w\, v_{xx}+\frac{w^2}{H_w^2}\, \frac{EA}{L}\int_0^L v(z,t)\, dz=f(x,t)\\
\displaystyle I_0\, \theta_{tt}+C_1\, \theta_{xxxx}-(C_2+H_w\ell^2)\, \theta_{xx}+\frac{\ell^2w^2}{H_w^2}\, \frac{EA}{L}\int_0^L\theta(z,t)\, dz=g(x,t)\\
x\in(0,L)\, ,\ t>0,
\end{array}\right.
\endeq
\renewcommand{\arraystretch}{1.5}where $m$, $w$, $H_w$ are as in \eq{primissima}, $EI$, $C_1$, $C_2$, $EA$ are respectively the flexural,
warping, torsional, extensional stiffness of the girder, $I_0$ the polar moment of inertia of the girder section,
$2\ell$ the roadway width, $f(x,t)$ and $g(x,t)$ are the lift and the moment for unit girder length of the self-excited forces.
The linearisation here consists in dropping the term $\gamma v v_{xx}$ but a preliminary linearisation was already present in \eq{flutter} in the zero
order term. And the nonlocal linear term $\int_0^L v$, which replaces the zero order term in \eq{flutter},
is obtained by assuming ${\cal LHL}$. The nonlocal term in \eq{eqqq} represents the increment of energy due to the external
wind during a period of time; this will be better explained in Section \ref{energies}.\par
A special mention is deserved by an important paper by Abdel-Ghaffar \cite{abdel} where variational principles are used to obtain the combined
equations of a suspension bridge motion in a fairly general nonlinear form. The effect of coupled vertical-torsional oscillations as well as
cross-distortional of the stiffening structure is clarified by separating them into four different kinds of displacements: the vertical
displacement $v$, the torsional angle $\theta$, the cross section distortional angle $\psi$, the warping displacement $u$, although
$u$ can be expressed in terms of $\theta$ and $\psi$. These displacements are well described in Figure \ref{ghaffer} which is taken from
\cite[Figure 2]{abdel}.
\begin{figure}[ht]
\begin{center}
{\includegraphics[height=47mm, width=81mm]{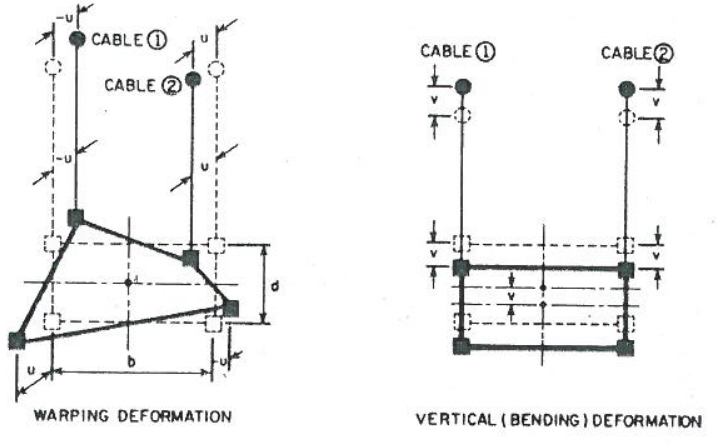}} \ {\includegraphics[height=47mm, width=83mm]{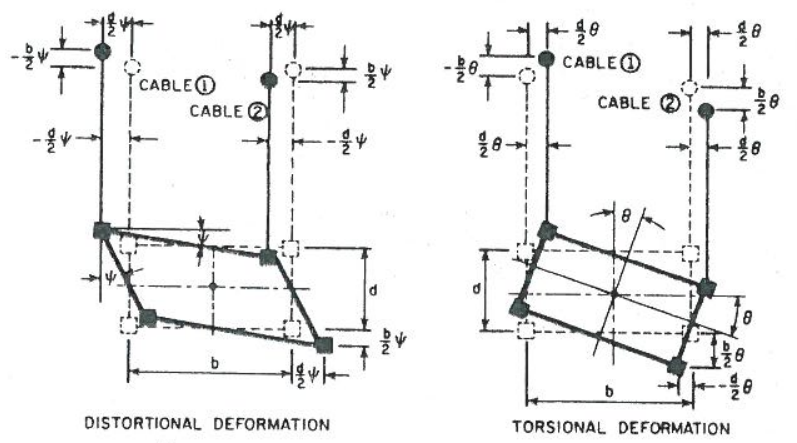}}
\caption{The four different kinds of displacements.}\label{ghaffer}
\end{center}
\end{figure}
A careful analysis of the energies involved is made, reaching up to fifth derivatives in the equations, see \cite[(15)]{abdel}. Higher order
derivatives are then neglected and the following nonlinear system of three PDE's of fourth order in the three unknown displacements $v$, $\theta$,
$\psi$ is obtained, see \cite[(28)-(29)-(30)]{abdel}:
$$\left\{\begin{array}{l}
\frac{w}{g}\, v_{tt}+EI\, v_{xxxx}-\Big(2H_w+H_1(t)+H_2(t)\Big)\, v_{xx}+\frac{b}{2}\, \Big(H_1(t)-H_2(t)\Big)\, (\theta_{xx}+\psi_{xx})\\
\ \ \ +\frac{w}{2H_w}\, \Big(H_1(t)+H_2(t)\Big)-\frac{w_s\, r^2}{g}\, \left(1+\frac{EI}{2G\mu r^2}\right)\, v_{xxtt}+\frac{w_s^2\, r^2}{4gG\mu}\,
v_{tttt}=0\\
I_m\, \theta_{tt}+E\Gamma\, \theta_{xxxx}-GJ\, \theta_{xx}-\frac{H_w\, b^2}{2}\, (\theta_{xx}+\psi_{xx})-\frac{\gamma\, \Gamma}{g}\, \theta_{xxtt}
-\frac{b^2}{4}\, \Big(H_1(t)+H_2(t)\Big)\, (\theta_{xx}+\psi_{xx})\\
\ \ \ +\frac{b}{2}\, \Big(H_1(t)-H_2(t)\Big)\, v_{xx}-\frac{\gamma\, \Lambda}{g}
\psi_{xxtt}+\frac{b\, w}{4H_w}\, \Big(H_2(t)-H_1(t)\Big)+E\Lambda\, \psi_{xxxx}+\frac{w_c\, b^2}{4g}\, \psi_{tt}=0\\
\frac{w_c\, b^2}{4g}\, (\psi_{tt}+\theta_{tt})+\frac{EA\, b^2d^2}{4}\, \psi_{xxxx}-\frac{H_w\, b^2}{2}\, (\psi_{xx}+\theta_{xx})-
\frac{\gamma Ab^2d^2}{4g}\, \psi_{xxtt}-\frac{\gamma\, \Lambda}{g}\theta_{xxtt}+E\Lambda\, \theta_{xxxx}\\
\ \ \ -\frac{b^2}{4}\, \Big(H_1(t)+H_2(t)\Big)\, (\theta_{xx}+\psi_{xx})+\frac{b}{2}\, \Big(H_1(t)-H_2(t)\Big)\, v_{xx}
+\frac{w\, b}{4H_w}\, \Big(H_2(t)-H_1(t)\Big)=0\ .
\end{array}\right.$$
We will not explain here what is the meaning of all the constants involved, it would take several pages... Some of the constants have a clear meaning,
for the interpretation of the remaining ones, we refer to \cite{abdel}. Let us just mention that $H_1$ and $H_2$ represent the vibrational horizontal
components of the cable tension and depend on $v$, $\theta$, $\psi$, and their first derivatives, see \cite[(3)]{abdel}.
We wrote these equations in order to convince the reader that the behavior of the bridge is modeled by terribly complicated equations
and by no means one should make use of ${\cal LHL}$. After making such huge effort, Abdel-Ghaffar simplifies the problem by neglecting the cross section
deformation, the shear deformation and rotatory inertia; he obtains a coupled nonlinear vertical-torsional system of two equations in the two unknowns
functions $v$ and $\theta$. These equations are finally linearised, by neglecting $H_1$ and $H_2$ which are considered small when compared
with the initial tension $H_w$. Then the coupling effect disappears and equations \eq{eqqq} are recovered, see \cite[(34)-(35)]{abdel}. What a pity,
an accurate modeling ended up with a linearisation! But there was no choice... how can one imagine to get any kind of information from the above system?\par
Summarising, after the previously described pioneering models from \cite{bleich,melan,navier2,rocard,tac2} there has not been much work among engineers about
alternative differential equations; the attention has turned to improving performances through design factors,
see e.g.\ \cite{hhs}, or on how to solve structural problems rather than how to understand them more deeply. In this respect, from \cite[p.2]{mckmonth}
we quote a personal discussion between McKenna and a distinguished civil engineer who said
\begin{center}
%\fbox%{%
\begin{minipage}{162mm}
{\em ... having found obvious and effective physical ways of avoiding the problem, engineers will not give too much attention to the mathematical solution
of this fascinating puzzle ...}
\end{minipage}
%}
\end{center}

Only modeling modern footbridges has attracted some interest from a theoretical point of view. As already mentioned, pedestrian bridges are extremely
flexible and display elastic behaviors similar to suspension bridges, although the oscillations are of different kind.
In this respect, we would like to mention an interesting discussion with Diana \cite{diana}. He explained that when a suspension bridge is attacked
by wind its starts oscillating, but soon afterwards the wind itself modifies its behavior according to the bridge oscillation; so, the wind
amplifies the oscillations by blowing synchronously. A qualitative description of this phenomenon was already attempted by Rocard \cite[p.135]{rocard}:
\begin{center}
%\fbox%{%
\begin{minipage}{162mm}
{\em ... it is physically certain and confirmed by ordinary experience, although the effect is known only qualitatively, that a bridge vibrating with an
appreciable amplitude completely imposes its own frequency on the vortices of its wake. It appears as if in some way the bridge itself discharges the vortices
into the fluid with a constant phase relationship with its own oscillation... .}
\end{minipage}
%}
\end{center}
This reminds the above described behavior of footbridges where {\em pedestrians fall
spontaneously into step with the vibrations}: in both cases, external forces synchronise their effect and amplify the oscillations of the bridge.
This is one of the reasons why self-excited oscillations appear in suspension and pedestrian bridges.\par
In \cite{bodgi} a simple 1D model was proposed in order to describe the crowd-flow phenomena occurring when
pedestrians walk on a flexible footbridge. The resulting equation \cite[(2)]{bodgi} reads
\neweq{pedestrian}
\left(m_s(x)+m_p(x,t)\right)u_{tt}+\delta(x)u_t+\gamma(x)u_{xxxx}=g(x,t)
\endeq
where $x$ is the coordinate along the beam axis, $t$ the time, $u=u(x,t)$ the lateral displacement,
$m_s(x)$ is the mass per unit length of the beam, $m_p(x,t)$ the linear mass of pedestrians,
$\delta(x)$ the viscous damping coefficient, $\gamma(x)$ the stiffness per unit length, $g(x,t)$ the pedestrian lateral force
per unit length. In view of the superlinear behavior for large displacements observed for the London Millennium Bridge, see Section \ref{story},
we wonder if instead of a linear model one should consider a lateral force also depending on the displacement, $g=g(x,t,u)$, being
superlinear with respect to $u$.

\begin{problem} {\em Study \eq{pedestrian} modified as follows
$$
u_{tt}+\delta u_t+\gamma u_{xxxx}+f(u)=g(x,t)\qquad(x\in\R\, ,\ t>0)
$$
where $\delta>0$, $\gamma>0$ and $f(s)=s+\eps s^3$ for some $\eps>0$ small. One could first consider the Cauchy problem
$$u(x,0)=u_0(x)\ ,\quad u_t(x,0)=u_1(x)\quad(x\in\R)$$
with $g\equiv0$. Then one could seek traveling waves such as $u(x,t)=w(x-ct)$ which solve the ODE
$$
\gamma w''''(\tau)+c^2w''(\tau)+\delta c w'(\tau)+f(w(\tau))=0\qquad(x-ct=\tau\in\R).
$$
Finally, one could also try to find properties of solutions in a bounded interval $x\in(0,L)$.}\endproof\end{problem}

Scanlan-Tomko \cite{scantom} introduce a model in which the torsional angle $\theta$ of the roadway section satisfies the equation
\neweq{scann}
I\, [\theta''(t)+2\zeta_\theta\omega_\theta \theta'(t)+\omega_\theta^2\theta(t)]=A\theta'(t)+B\theta(t)\ ,
\endeq
where $I$, $\zeta_\theta$, $\omega_\theta$ are, respectively, associated inertia, damping ratio, and natural frequency. The r.h.s.\ of \eq{scann} represents
the aerodynamic force and was postulated to depend linearly on both $\theta'$ and $\theta$ with the positive constants $A$ and $B$ depending
on several parameters of the bridge. Since \eq{scann} may be seen as a two-variables first order linear system, it fails to fulfill both the requirements
of (GP). Hence, \eq{scann} is not suitable to describe the disordered behavior of a bridge. And indeed, elementary calculus shows that if $A$ is sufficiently
large, then solutions to \eq{scann} are positive exponentials times trigonometric functions which do not exhibit a sudden appearance of self-excited
oscillations, they merely blow up in infinite time. In order to have a more reliable description of the bridge, in Section \ref{blup} we consider
the fourth order nonlinear ODE $w''''+kw''+f(w)=0$ ($k\in\R$). We will see that solutions to this equation
blow up in finite time with self-excited oscillations appearing suddenly, without any intermediate stage.\par
That linearisation yields wrong models is also the opinion of McKenna \cite[p.4]{mckmonth} who comments \eq{scann} by writing
\begin{center}
%\fbox%{%
\begin{minipage}{162mm}
{\em This is the point at which the discussion of torsional oscillation starts in the engineering literature.}
\end{minipage}
%}
\end{center}
He claims that the problem is in fact nonlinear and that \eq{scann} is obtained after an incorrect linearisation. McKenna concludes by noticing that
\begin{center}
%\fbox%{%
\begin{minipage}{162mm}
{\em Even in recent engineering literature ... this same mistake is reproduced.}
\end{minipage}
%}
\end{center}
The mistake claimed by McKenna is that the equations are often linearised by taking $\sin\theta=\theta$ and $\cos\theta=1$ also for large amplitude torsional
oscillations $\theta$. The corresponding equation then becomes linear and the main torsional phenomenon disappears. Avoiding
this rude approximation, but considering the cables and hangers as linear springs obeying ${\cal LHL}$, McKenna reaches an uncoupled second order
system for the functions representing the vertical displacement $y$ of the barycenter $B$ of the cross section of the roadway and the deflection from
horizontal $\theta$, see Figure \ref{9}. Here, $2\ell$ denotes the width of the roadway whereas $C_1$ and $C_2$ denote the two lateral hangers
which have opposite extension behaviors.
\begin{figure}[ht]
\begin{center}
{\includegraphics[height=45mm, width=118mm]{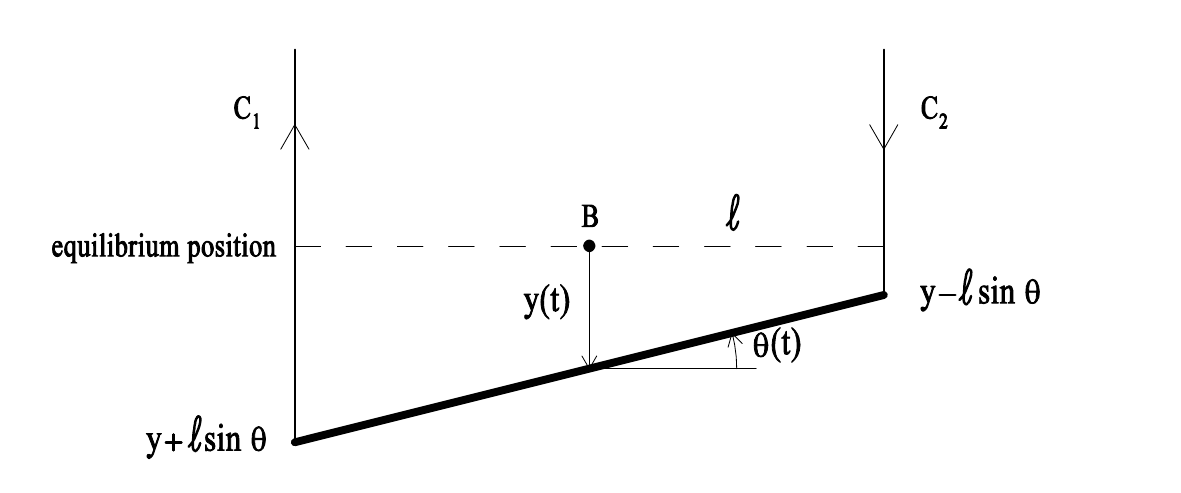}}
\caption{Vertical displacement and deflection of the cross section of the roadway.}\label{9}
\end{center}
\end{figure}

McKenna-Tuama \cite{mckO} suggest a slightly different model. They write:
\begin{center}
%\fbox%{%
\begin{minipage}{162mm}
{\em ... there should be some torsional forcing. Otherwise, there would be no input of energy to overcome the natural
damping of the system ... we expect the bridge to behave like a stiff spring, with a restoring force that becomes somewhat superlinear.}
\end{minipage}
%}
\end{center}
We completely agree with this, see the conclusions in Section \ref{conclusions}. McKenna-Tuama end up with the following coupled second order system
\neweq{coupled}
\frac{m\ell^2}{3}\, \theta''=\ell\cos\theta\, \Big(f(y-\ell\sin\theta)-f(y+\ell\sin\theta)\Big)\ ,\quad
m\, y''=-\Big(f(y-\ell\sin\theta)+f(y+\ell\sin\theta)\Big)\ ,
\endeq
see again Figure \ref{9}. The delicate point is the choice of the superlinearity $f$ which
\cite{mckO} take first as $f(s)=(s+1)^+-1$ and then as $f(s)=e^s-1$ in order to maintain the asymptotically
linear behavior as $s\to0$. Using \eq{coupled}, \cite{mckmonth,mckO} were able to numerically replicate the phenomenon observed at the Tacoma Bridge, namely the
sudden transition from vertical oscillations to torsional oscillations. They found that if the
vertical motion was sufficiently large to induce brief slackening of the hangers, then numerical results highlighted a rapid transition to a torsional motion.
Nevertheless, the physicists Green-Unruh \cite{green} believe that the hangers were not slack during the Tacoma Bridge oscillation. If this were true, then the
piecewise linear forcing term $f$ becomes totally linear. Moreover, by commenting the results in \cite{mckmonth,mckO}, McKenna-Moore \cite[p.460]{mckmoore}
write that
\begin{center}
%\fbox%{%
\begin{minipage}{162mm}
{\em ...the range of parameters over which the transition from vertical to torsional motion was observed was physically unreasonable ... the restoring
force due to the cables was oversimplified ... it was necessary to impose small torsional forcing}.
\end{minipage}
%}
\end{center}

Summarising, \eq{coupled} seems to be the first model able to reproduce the behavior of the Tacoma Bridge but it appears to need some
improvements. First, one should avoid the possibility of a linear behavior of the hangers, the nonlinearity should appear before possible slackening
of the hangers. Second, the restoring force and the parameters involved should be chosen carefully.

\begin{problem} {\em Try a doubly superlinear term $f$ in \eq{coupled}. For instance, take $f(s)=s+\eps s^3$ with $\eps>0$ small, so that
\eq{coupled} becomes
\neweq{mia}
\frac{m\ell^2}{3}\, \theta''+2\ell^2\cos\theta\sin\theta\, \Big(1+3\eps y^2+\eps\ell^2\sin^2\theta\Big)=0\ ,\quad
m\, y''+2\Big(1+3\eps\ell^2\sin^2\theta\Big)y+2\eps y^3=0\ .
\endeq
It appears challenging to determine some features of the solution $(y,\theta)$ to
\eq{mia} and also to perform numerical experiments to see what kind of oscillations are displayed by the solutions.}\endproof\end{problem}

System \eq{coupled} is a $2\times2$ system which should be considered as a nonlinear fourth order model; therefore, it fulfills the necessary conditions
of the general principle (GP). Another fourth order differential equation was suggested in \cite{lzmck,McKennaWalter,mck4} as a one-dimensional model
for a suspension bridge, namely a beam of length $L$ suspended by hangers. When the hangers are stretched there is a restoring force which is
proportional to the amount of stretching, according to ${\cal LHL}$. But when the beam moves in the opposite direction, there is no restoring force
exerted on it. Under suitable boundary conditions, if $u(x,t)$ denotes the vertical displacement of the beam in the downward direction at position
$x$ and time $t$, the following nonlinear beam equation is derived
\neweq{beam}
u_{tt}+u_{xxxx}+\gamma u^+=W(x,t)\, ,\qquad x\in(0,L)\, ,\quad t>0\, ,
\endeq
where $u^+=\max\{u,0\}$, $\gamma u^+$ represents the force due to the cables and hangers which are considered as a linear spring with a one-sided
restoring force, and $W$ represents the forcing term acting on the bridge, including its own weight per unit
length, the wind, the traffic loads, or other external sources. After some normalisation, by seeking traveling waves $u(x,t)=1+w(x-ct)$ to \eq{beam}
and putting $k=c^2>0$, McKenna-Walter \cite{mck4} reach the following ODE
\neweq{maineq}
w''''(\tau)+kw''(\tau)+f(w(\tau))=0\qquad(x-ct=\tau\in\R)
\endeq
where $k\in(0,2)$ and $f(s)=(s+1)^+-1$. Subsequently, in order to maintain the same behavior but with a smooth nonlinearity, Chen-McKenna \cite{chenmck}
suggest to consider \eq{maineq} with $f(s)=e^s-1$. For later discussion, we notice that both these nonlinearities satisfy
\neweq{f}
f\in {\rm Lip}_{{\rm loc}}(\R)\,,\quad f(s)\,s>0 \quad \forall s\in \R\setminus\{0\}.
\endeq
Hence, when $W\equiv0$, \eq{beam} is just a special case of the more general semilinear fourth order wave equation
\neweq{beam2}
u_{tt}+u_{xxxx}+f(u)=0\, ,\qquad x\in(0,L)\, ,\quad t>0\, ,
\endeq
where the natural assumptions on $f$ are \eq{f} plus further conditions, according to the model considered. Traveling waves to \eq{beam2} solve \eq{maineq}
with $k=c^2$ being the squared velocity of the wave. Recently, for $f(s)=(s+1)^+-1$ and its variants, Benci-Fortunato \cite{benci} proved the existence of
special solutions to \eq{maineq} deduced by solitons of the beam equation \eq{beam2}.

\begin{problem} {\em It could be interesting to insert into the wave-type equation \eq{beam2} the term corresponding to the beam elongation, that is,
$$\int_0^L\Big(\sqrt{1+u_x(x,t)^2}-1\Big)\, dx.$$
This would lead to a quasilinear equation such as
$$u_{tt}+u_{xxxx}-\left(\frac{u_x}{\sqrt{1+u_x^2}}\right)_x+f(u)=0$$
with $f$ satisfying \eq{f}. What can be said about this equation? Does it admit oscillating solutions in a suitable sense? One should first consider
the case of an unbounded beam ($x\in\R$) and then the case of a bounded beam ($x\in(0,L)$) complemented with some boundary conditions.}\endproof\end{problem}

Motivated by the fact that it appears unnatural to ignore the motion of the main sustaining cable, a slightly more sophisticated and complicated string-beam
model was suggested by Lazer-McKenna \cite{mck1}. They treat the cable as a vibrating string, coupled with the vibrating beam of the roadway
by piecewise linear springs that have a given spring constant $k$ if expanded, but no restoring force if compressed. The sustaining
cable is subject to some forcing term such as the wind or the motions in the towers. This leads to the system
$$\left\{\begin{array}{ll}
v_{tt}-c_1v_{xx}+\delta_1v_t-k_1(u-v)^+=f(x,t)\, ,\qquad x\in(0,L)\, ,\quad t>0\, ,\\
u_{tt}+c_2u_{xxxx}+\delta_2u_t+k_2(u-v)^+=W_0\, ,\qquad x\in(0,L)\, ,\quad t>0\, ,
\end{array}\right.$$
where $v$ is the displacement from equilibrium of the cable and $u$ is the displacement of the beam, both measured in the downwards direction.
The constants $c_1$ and $c_2$ represent the relative strengths of the cables and roadway respectively, whereas $k_1$ and $k_2$ are the spring
constants and satisfy $k_2\ll k_1$. The two damping terms can possibly be set to $0$, while $f$ and $W_0$ are the forcing terms. We also refer
to \cite{ahmed} for a study of the same problem in a rigorous functional analytic setting.\par
Since the Tacoma Bridge collapse was mainly due to a wide torsional motion of the bridge, see \cite{tacoma}, the bridge cannot be considered as a one
dimensional beam. In this respect, Rocard \cite[p.148]{rocard} states
\begin{center}
%\fbox%{%
\begin{minipage}{162mm}
{\em Conventional suspension bridges are fundamentally unstable in the wind because the coupling effect introduced between bending and torsion by the
aerodynamic forces of the lift.}
\end{minipage}
%}
\end{center}
Hence, if some model wishes to display instability of bridges, it should necessarily take into account more degrees of freedom than just a beam.
In fact, to be exhaustive one should consider vertical oscillations $y$ of the roadway, its torsional angle $\theta$, and coupling with
the two sustaining cables $u$ and $v$. This model was suggested by Matas-O\v cen\'a\v sek \cite{matas} who consider the hangers as linear springs
and obtain a system of four equations; three of them are second order wave-type equations, the last one is again a fourth order equation such as
$$m\, y_{tt}+k\, y_{xxxx}+\delta\, y_t+E_1(y-u-\ell\sin\theta)+E_2(y-v+\ell\sin\theta)=W(x)+f(x,t)\ ;$$
we refer to $(SB_4)$ in \cite{drabek} for an interpretation of the parameters involved.\par
In our opinion, any model which describes the bridge as a one dimensional beam is too simplistic, unless the model takes somehow into account the possible
appearance of a torsional motion. In \cite{gazpav} it was suggested to maintain the one dimensional model provided one also allows displacements below the
equilibrium position and these displacements replace the deflection from horizontal of the roadway of the bridge; in other words,
\renewcommand{\arraystretch}{1.1}
\neweq{w0}
\begin{array}{c}
\mbox{the unknown function $w$ represents the upwards vertical displacement when $w>0$}\\
\mbox{and the deflection from horizontal, computed in a suitable unity measure, when $w<0$.}
\end{array}
\endeq
\renewcommand{\arraystretch}{1.5}In this setting, instead of \eq{beam} one should consider the more general semilinear fourth order wave
equation \eq{beam2} with $f$
satisfying \eq{f} plus further conditions which make $f(s)$ superlinear and unbounded when both $s\to\pm\infty$; hence, ${\cal LHL}$ is dropped
by allowing $f$ to be as close as one may wish to a linear function but eventually superlinear for large displacements.
The superlinearity assumption is justified both by the observations in Section \ref{story} and by the fact that more the position of the bridge is far
from the horizontal equilibrium position, more the action of the wind becomes relevant because the wind hits transversally the roadway of the bridge.
If ever the bridge would reach the limit vertical position, in case the roadway is torsionally rotated of a right angle, the wind would hit it
orthogonally, that is, with full power.\par\medskip
In this section we listed a number of attempts to model bridges mechanics by means of differential equations. The sources for this list
are very heterogeneous. However, except for some possible small damping term, none of them
contains odd derivatives. Moreover, none of them is acknowledged by the scientific community to perfectly describe the complex behavior of bridges.
Some of them fail to satisfy the requirements of (GP) and, in our opinion, must be accordingly modified. Some others seem to better describe
the oscillating behavior of bridges but still need some improvements.

\section{Blow up oscillating solutions to some fourth order differential equations}\label{blup}

If the trivial solution to some dynamical system is unstable one may hope to magnify self-excitement phenomena through finite time blow up. In this section we
survey and discuss several results about solutions to \eq{maineq} which blow up in finite time. Let us rewrite the equation with a different time variable, namely
\neweq{maineq2}
w''''(t)+kw''(t)+f(w(t))=0\qquad(t\in\R)\ .
\endeq
We first recall the following results proved in \cite{bfgk}:

\begin{theorem}\label{global}
Let $k\in \R$ and assume that $f$ satisfies \eqref{f}.\par
$(i)$ If a local solution $w$ to \eqref{maineq2} blows up at some finite $R\in\R$, then
\neweq{pazzo}
\liminf_{t\to R}w(t)=-\infty\qquad\mbox{and}\qquad\limsup_{t\to R}w(t)=+\infty\, .
\endeq

$(ii)$ If $f$ also satisfies
\neweq{ff3}
\limsup_{s\to+\infty}\frac{f(s)}{s}<+\infty\qquad\mbox{or}\qquad\limsup_{s\to-\infty}\frac{f(s)}{s}<+\infty,
\endeq
then any local solution to \eqref{maineq2} exists for all $t\in\R$.
\end{theorem}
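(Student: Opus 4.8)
\emph{A conserved energy.} The first move is to notice that \eqref{maineq2} is variational in disguise: multiplying by $w'$ and rearranging shows that
$$\mathcal H(t):=w'(t)\,w'''(t)-\tfrac12\,w''(t)^2+\tfrac k2\,w'(t)^2+F\big(w(t)\big),\qquad F(s):=\int_0^s f(\sigma)\,d\sigma,$$
is constant along every solution; write $\mathcal H\equiv E_0$. Since \eqref{f} forces $f>0$ on $(0,+\infty)$ and $f<0$ on $(-\infty,0)$, one has $F\ge0$ on $\R$, vanishing only at the origin. Conservation of $\mathcal H$ and the sign of $F$ are, essentially, the only structural facts I would use.

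\emph{Part (i): blow-up forces $w$ itself to be unbounded.} Let $(\alpha,R)$, $R<+\infty$, be a maximal interval of existence. Written as a first-order autonomous system in $(w,w',w'',w''')$ with locally Lipschitz right-hand side, the equation has the property that maximality with $R<\infty$ forces $(w,w',w'',w''')$ to be unbounded as $t\to R$. The point is that $w$ itself must be unbounded: if $|w|\le M$ on some $[t_0,R)$, then $f(w)$ is bounded there, so $\varphi:=w''$ solves the \emph{linear} equation $\varphi''+k\varphi=-f(w)$ with bounded right-hand side on a \emph{bounded} interval; the variation-of-constants formula — the homogeneous solutions and the Green kernel being bounded on the closed finite interval $[t_0,R]$, whatever the sign of $k$ — makes $w''$ bounded, hence $w''''=-kw''-f(w)$ bounded, hence $w'''$ and $w'$ bounded by integration over the finite interval, so the whole vector is bounded on $[t_0,R)$, contradicting maximality. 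Thus $\limsup_{t\to R}|w(t)|=+\infty$.

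\emph{Part (i): the two-sided refinement.} To upgrade this to \eqref{pazzo}, suppose not; after possibly replacing $w$ by $-w$ (which preserves \eqref{f}) assume $\limsup_{t\to R}w<+\infty$, so $w\le M$ near $R$, and then the previous step gives $\liminf_{t\to R}w=-\infty$. Here I would evaluate the conservation law at instants $t_n\to R$ where $w$ is very negative — taking them to be local minima, so $w'(t_n)=0$, $w''(t_n)\ge0$ and $\tfrac12 w''(t_n)^2=F(w(t_n))-E_0$ — and combine this with the information on $w'''$ obtained by differentiating $\mathcal H$, to force, via the equation on the intervals between consecutive minima, a rebound of $w$ above the level $M$: the contradiction. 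I expect this to be the crux and the genuinely delicate point, because no coercivity of $F$ at $-\infty$ is assumed: the argument cannot simply say $F(w(t_n))\to+\infty$, and must instead play the four terms of $\mathcal H$ off against each other while tracking the sign of $k$.

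\emph{Part (ii): global existence under the one-sided growth bound.} The two alternatives in \eqref{ff3} are interchanged by $w\mapsto-w$, $f(s)\mapsto -f(-s)$, so assume $\limsup_{s\to+\infty}f(s)/s<+\infty$; then $f(s)\le c_1 s^++c_2$ on $\R$ for suitable $c_1,c_2\ge0$, hence $F(s)\le\tfrac{c_1}{2}s^2+c_2 s$ for $s\ge0$. Suppose a solution blew up at a finite $R$. By part (i) one would have $\limsup_{t\to R}w=+\infty$, so $w$ attains arbitrarily large local maxima $t^\ast$ near $R$, where $w'(t^\ast)=0$, $w''(t^\ast)\le0$, and $\mathcal H\equiv E_0$ forces $|w''(t^\ast)|=O\big(w(t^\ast)\big)$. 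The plan is to bootstrap this into an a priori bound: the at-most-quadratic growth of $F$ on the side where $w$ is large, together with conservation of $\mathcal H$, should pin $|w''|$, and then $|w'''|$ and $|w'|$, to grow at most linearly in $\sup_{[t_0,t]}|w|$, after which a Gronwall inequality for $|w|+|w'|+|w''|+|w'''|$ closes an a priori estimate on $[t_0,R)$ and contradicts maximality. The obstacle is exactly the one-sidedness of \eqref{ff3}: $f(w)$ is uncontrolled when $w$ is very negative, so the Gronwall step must be fed by part (i) — which guarantees the trajectory keeps returning to the side on which $f$ is tame — rather than by a naive two-sided bound on $f(w)$.
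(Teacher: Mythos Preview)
The paper itself does not prove this theorem: it is quoted from \cite{bfgk} (``We first recall the following results proved in \cite{bfgk}''). So there is no in-paper argument to compare against, and your proposal has to be assessed on its own.

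Your conserved quantity $\mathcal H$ is correct and is the natural energy for \eqref{maineq2}. Your argument for the first half of (i) --- that finite-time blow-up forces $w$ itself to be unbounded --- is complete and correct: writing $\varphi=w''$ as the solution of the linear equation $\varphi''+k\varphi=-f(w)$ with bounded forcing on a bounded interval, and closing via variation of constants, is exactly the right move.

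The remaining two steps, however, are plans rather than proofs, and you say so yourself. For the two-sided refinement in (i) you write ``I expect this to be the crux and the genuinely delicate point'', and for (ii) you write ``The obstacle is exactly the one-sidedness of \eqref{ff3}''. Both diagnoses are accurate, but neither difficulty is actually overcome.

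In (i), at local minima $t_n$ with $w(t_n)\to-\infty$ you correctly obtain $\tfrac12 w''(t_n)^2=F(w(t_n))-E_0$; but under the sole hypothesis \eqref{f} the primitive $F$ need not diverge at $-\infty$, so this gives no information on $w''(t_n)$. Your phrase ``force a rebound of $w$ above the level $M$'' is precisely what must be shown, and it is not filled in. The argument in \cite{bfgk} does proceed through such a rebound analysis on the intervals between successive extrema, but carrying it out requires careful bookkeeping of the signs of $w',w'',w'''$ and is not a one-line consequence of the energy identity.

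In (ii), the Gronwall strategy you sketch cannot close as written. To feed a linear differential inequality for $|w|+|w'|+|w''|+|w'''|$ you would need $|f(w)|\le C(1+|w|)$, and the one-sided hypothesis \eqref{ff3} gives nothing of the sort when $w$ is large and negative. Noting that the trajectory ``keeps returning to the side on which $f$ is tame'' is true by (i), but it does not by itself produce the pointwise inequality Gronwall requires on the bad intervals. The proof in \cite{bfgk} is not a Gronwall argument; it uses (i) more structurally, working on the intervals where $w>0$ --- where the one-sided bound really does control $f(w)$ --- to extract a contradiction.

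In short: the skeleton is right and the difficulties are correctly located, but the two hard steps are left as intentions.
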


If both the conditions in \eq{ff3} are satisfied then global existence follows from classical theory of ODE's; but \eq{ff3} merely requires
that $f$ is ``one-sided at most linear'' so that statement $(ii)$ is far from being trivial and, as shown in \cite{gazpav}, it does not hold for
equations of order at most 3. On the other hand, Theorem \ref{global} $(i)$ states that, under the sole assumption \eq{f}, the only way that
finite time blow up can occur is with ``wide and thinning oscillations'' of the solution $w$; again, in \cite{gazpav} it was shown that this kind of
blow up is a phenomenon typical of at least fourth order problems such as \eq{maineq2} since it does not occur in related lower order equations.
Note that assumption \eq{ff3} includes, in particular, the cases where $f$ is either concave or convex.\par
Theorem \ref{global} does not guarantee that the blow up described by \eq{pazzo} indeed occurs. For this reason, we assume further that
\neweq{fmono}
f\in {\rm Lip}_{{\rm loc}}(\R)\cap C^2(\R\setminus\{0\})\ ,\quad f'(s)\ge0\quad\forall s\in\R\ ,\quad\liminf_{s\to\pm\infty}|f''(s)|>0
\endeq
and the growth conditions
\neweq{f2}
\exists p>q\ge1,\ \alpha\ge0,\ 0<\rho\le \beta,\quad\mbox{s.t.}\quad\rho|s|^{p+1}\le f(s)s\le\alpha|s|^{q+1}+\beta|s|^{p+1}\quad\forall s\in\R\ .
\endeq
Notice that \eq{fmono}-\eq{f2} strengthen \eq{f}. In \cite{gazpav3} the following sufficient conditions for the finite
time blow up of local solutions to \eq{maineq2} has been proved.

\begin{theorem}\label{blowup}
Let $k\le0$, $p>q\ge1$, $\alpha\ge0$, and assume that $f$ satisfies \eqref{fmono} and \eqref{f2}.
Assume that $w=w(t)$ is a local solution to \eqref{maineq2} in a neighborhood of $t=0$ which satisfies
\neweq{tech}
w'(0)w''(0)-w(0)w'''(0)-kw(0)w'(0)>0\, .
\endeq
Then, $w$ blows up in finite time for $t>0$, that is, there exists $R\in(0,+\infty)$ such that \eqref{pazzo} holds.
\end{theorem}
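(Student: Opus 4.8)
The backbone of the argument is a conserved energy together with a monotone functional built directly from the left-hand side of \eqref{tech}. First I would record the energy: multiplying \eqref{maineq2} by $w'$ shows that
$$\mathcal{E}(t):=w'(t)\,w'''(t)-\tfrac12 w''(t)^2+\tfrac{k}{2}\,w'(t)^2+F(w(t))$$
is constant along any solution, where $F(s):=\int_0^s f(\sigma)\,d\sigma$. Next I set $N(t):=w'(t)w''(t)-w(t)w'''(t)-k\,w(t)w'(t)$, so that assumption \eqref{tech} reads precisely $N(0)>0$. A direct differentiation using \eqref{maineq2} gives $N'(t)=w''(t)^2-k\,w'(t)^2+f(w(t))\,w(t)$, which is $\ge 0$ because $k\le0$ and $f(s)s\ge\rho|s|^{p+1}\ge0$ by \eqref{f2}. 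Hence $N$ is nondecreasing and $N(t)\ge N(0)>0$ on the whole maximal interval of existence. Finally I note the convenient identities $N=S'$ and $S''=N'\ge w''^2+f(w)w\ge\rho|w|^{p+1}$, where $S(t):=w'(t)^2-w(t)w''(t)-\tfrac{k}{2}w(t)^2$.

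From here the plan is to argue by contradiction: assume the maximal existence time is $+\infty$. Since $S'=N\ge N(0)>0$, the function $S$ is strictly increasing and unbounded, and the goal is to show that $S$ — hence $\|w\|_\infty$ — actually becomes infinite at a finite time, contradicting global existence. I would carry this out in two steps. In the first I upgrade $N(t)\ge N(0)$ to $N(t)\to+\infty$: if $N$ stayed bounded it would converge, forcing $\int_0^\infty\big(w''^2+f(w)w-k\,w'^2\big)\,dt<\infty$; combining this integrability with conservation of $\mathcal{E}$, with \eqref{maineq2} rewritten as $w''''=-kw''-f(w)$, and with the two-sided superlinear growth \eqref{f2} (which yields $\int_0^\infty|w|^{p+1}\,dt<\infty$), a short asymptotic analysis forces $w,w',w'',w'''\to0$ as $t\to\infty$, whence $N(t)\to0$ — contradicting $N(t)\ge N(0)>0$.

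In the second step I would turn $S''\ge\rho|w|^{p+1}$ into an \emph{autonomous} superlinear differential inequality for $S$. The idea is to eliminate $w''^2$ from $S''$ via the energy identity $w''^2=2w'w'''+k\,w'^2+2F(w)-2\mathcal{E}(0)$, to bound $F(w)$ below by $\tfrac{\rho}{p+1}|w|^{p+1}$ and above by $w\,f(w)$ (the latter because $f$ is nondecreasing with $f(0)=0$, by \eqref{fmono}), and then — after one further integration — to absorb the indefinite cross-terms $w'w'''$ and $w'w''$ using Young's inequality, the favorable-sign contribution $-\tfrac{k}{2}w^2\ge0$, and the monotonicity of $N$. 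This should reduce matters to an inequality $S''\ge c\,S^{(p+1)/2}$ for $t$ large, with $c>0$ and exponent $(p+1)/2>1$ (since $p>q\ge1$); comparison with the finite-time blow-up of $z''=c\,z^{(p+1)/2}$ (for data with $z(t_0),z'(t_0)>0$) then shows $S$ blows up at some finite $R>0$, the desired contradiction. Once $R<\infty$ is established, the oscillatory conclusion \eqref{pazzo} — namely $\liminf_{t\to R}w=-\infty$ and $\limsup_{t\to R}w=+\infty$ — follows at once from Theorem \ref{global}$(i)$, whose only hypothesis \eqref{f} is implied by \eqref{fmono}--\eqref{f2}.

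The main obstacle is precisely this last closing estimate. The functionals $S,N$ and their derivatives mix $w$ with $w',w'',w'''$, so transforming the clean pointwise bound $S''\ge\rho|w|^{p+1}$ into a \emph{closed} superlinear inequality in $S$ alone requires showing that every wrong-sign contribution is genuinely dominated by the superlinear term; this is exactly where the full strength of $k\le0$, of the conserved energy, and of the structural hypotheses \eqref{fmono}--\eqref{f2} (two-sided superlinearity, monotonicity of $f$, and the nondegeneracy $\liminf_{s\to\pm\infty}|f''(s)|>0$) must be used in combination. A secondary, milder technical point is the upgrade $N(t)\to+\infty$ in the first step, which relies on a careful asymptotic analysis of the global solution through the equation and the constancy of $\mathcal{E}$. \endproof
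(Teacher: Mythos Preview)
The paper does not prove Theorem \ref{blowup}; it merely quotes the result from \cite{gazpav3}. So there is no in-paper proof to compare against. I can therefore only evaluate your proposal on its own merits and against what is known about the argument in \cite{gazpav3}.

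Your opening moves are correct and are indeed the backbone of the original proof: the conserved quantity $\mathcal{E}$, the functional $N(t)=w'w''-ww'''-kww'$ with $N'=w''^2-k\,w'^2+f(w)w\ge0$, and the primitive $S$ with $S'=N$, $S''=N'$, are exactly the structures exploited in \cite{gazpav3}. So the diagnosis ``$N(t)\ge N(0)>0$ for all $t$, hence $S\to+\infty$ if the solution were global'' is sound, and this is where the role of \eqref{tech} and of $k\le0$ is cleanly isolated.

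The genuine gap is your ``second step''. You aim for a closed differential inequality $S''\ge c\,S^{(p+1)/2}$, but $S=w'^2-ww''-\tfrac{k}{2}w^2$ mixes $w,w',w''$, while the lower bound you have is $S''\ge\rho|w|^{p+1}$. To get from the latter to the former you would need $|w|\gtrsim S^{1/2}$, and nothing prevents $S$ from being dominated by $w'^2$ or $-ww''$ on long stretches where $|w|$ is small---precisely what happens for a highly oscillatory function. Your proposed route (substitute $w''^2=2w'w'''+k\,w'^2+2F(w)-2\mathcal E(0)$, then absorb cross-terms by Young) does not close either: after the substitution you pick up $2w'w'''$, which has no sign and cannot be controlled by the remaining nonnegative pieces without reintroducing $w''^2$; moreover the bounds $F(w)\ge\tfrac{\rho}{p+1}|w|^{p+1}$ and $F(w)\le wf(w)$ still live in $w$-space, not in $S$-space. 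In short, the sought-for autonomous inequality for $S$ is not a consequence of the quantities you have assembled.

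This is not a cosmetic issue: in \cite{gazpav3} the closing is \emph{not} done by a single superlinear ODE inequality for a scalar like $S$. Instead, the argument proceeds through an oscillation analysis---one shows that a global solution must change sign infinitely often, studies the behaviour of $w,w',w'',w'''$ between consecutive zeros (this is where the nondegeneracy condition $\liminf_{s\to\pm\infty}|f''(s)|>0$ in \eqref{fmono} and the two-sided bound \eqref{f2} are really used), and extracts a recursive growth estimate on the amplitudes $\max_{[z_j,z_{j+1}]}|w|$ that is incompatible with global existence. The monotone functional $N$ feeds into this machinery, but the blow-up is produced at the level of the oscillation pattern, not by a Gronwall/comparison argument on $S$. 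Your first step (upgrading $N(t)\ge N(0)$ to $N(t)\to\infty$) is also more delicate than stated: integrability of $w''^2$ and $|w|^{p+1}$ on $(0,\infty)$ does not by itself force $w,w',w'',w'''\to0$ (think of tall thin spikes), and when $k=0$ you do not even control $\int w'^2$; the actual argument needs the equation and the energy in a more careful way.

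So: keep $\mathcal E$, $N$, $S$---they are the right objects---but replace the hoped-for inequality $S''\ge cS^{(p+1)/2}$ by an analysis of the zeros/extrema of $w$, as in \cite{gazpav3}; that is where the proof actually closes.
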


Since self-excited oscillations such as \eq{pazzo} should be expected in any equation attempting to model suspension bridges, linear models should be avoided.
Unfortunately, even if the solutions to \eq{maineq2} display these oscillations, they cannot be prevented since they arise suddenly after a long time of apparent
calm. In Figure \ref{duemila}, we display the plot of a solution to \eq{maineq2}.
\begin{figure}[ht]
\begin{center}
{\includegraphics[height=33mm, width=51mm]{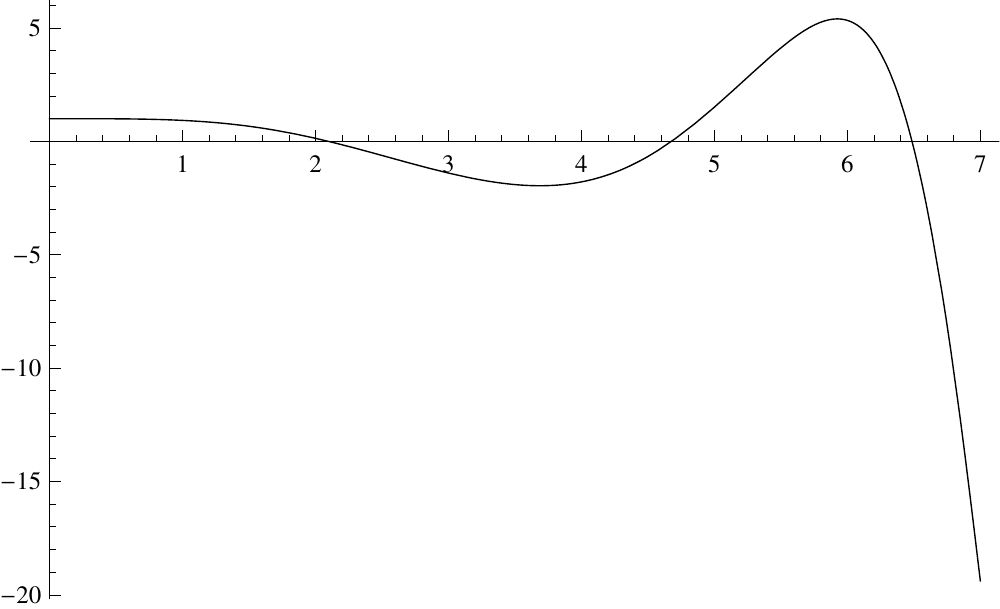}}\quad{\includegraphics[height=33mm, width=51mm]{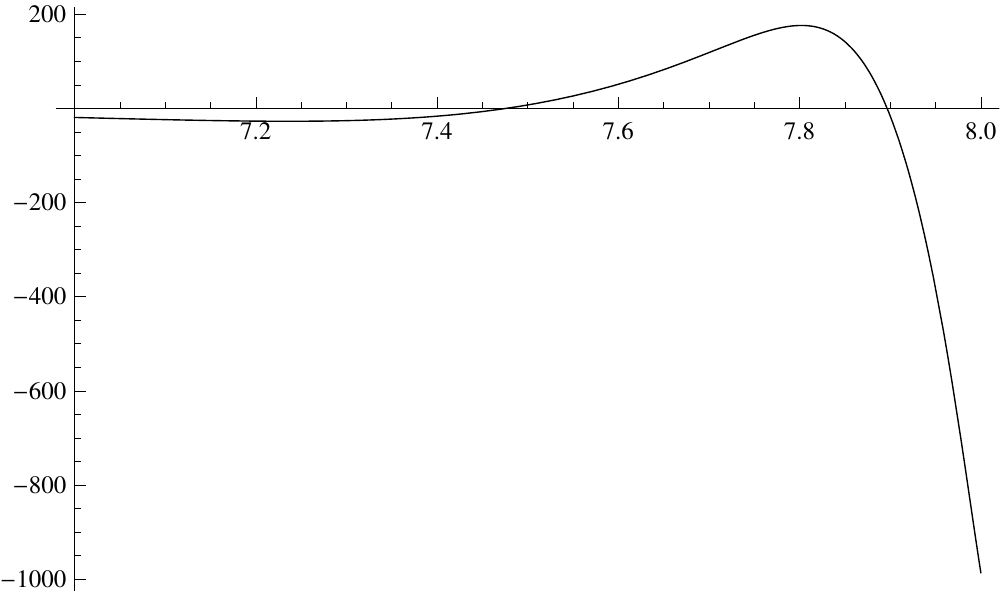}}\quad
{\includegraphics[height=33mm, width=51mm]{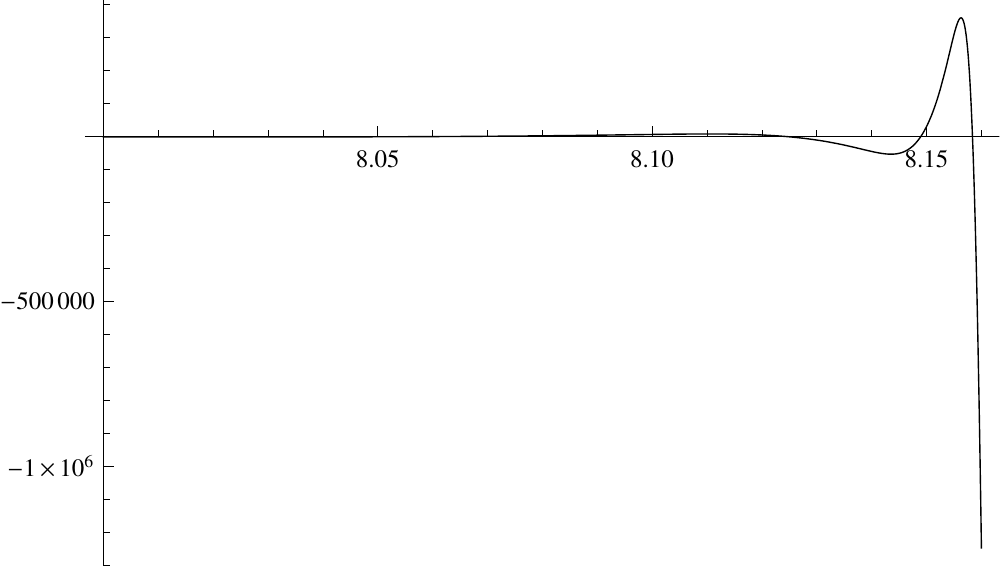}}
\caption{Solution to \eq{maineq2} for $k=3$, $[w(0),w'(0),w''(0),w'''(0)]=[1,0,0,0]$, $f(s)=s+s^3$. The three intervals are $t\in[0,7]$, $t\in[7,8]$, $t\in[8,8.16]$}\label{duemila}
\end{center}
\end{figure}
It can be observed that the solution has oscillations with increasing amplitude and rapidly decreasing ``nonlinear frequency"; numerically, the blow up seems to occur
at $t=8.164$. Even more impressive appears the plot in Figure \ref{mille}.
\begin{figure}[ht]
\begin{center}
{\includegraphics[height=33mm, width=51mm]{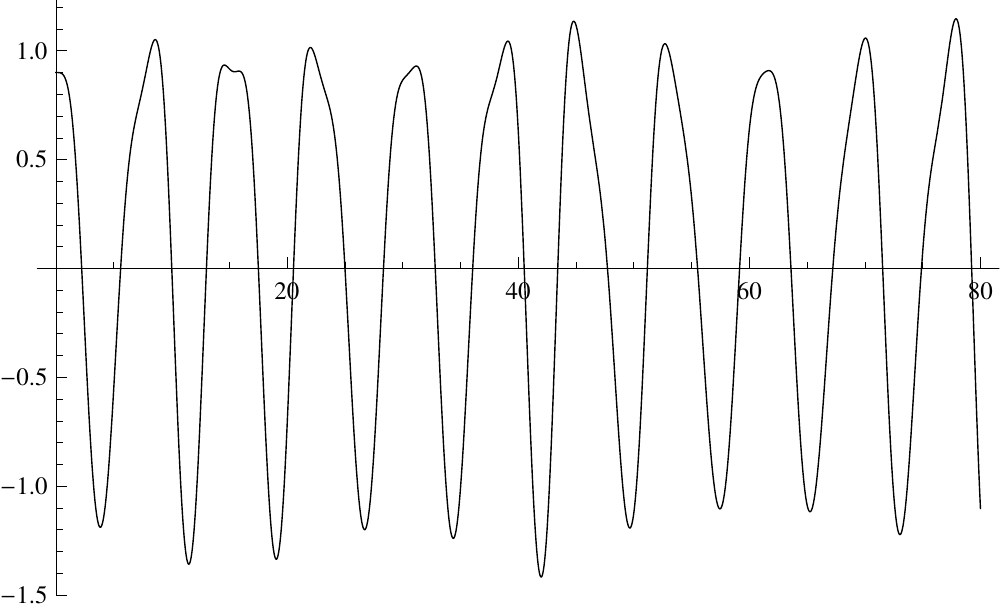}}\quad{\includegraphics[height=33mm, width=51mm]{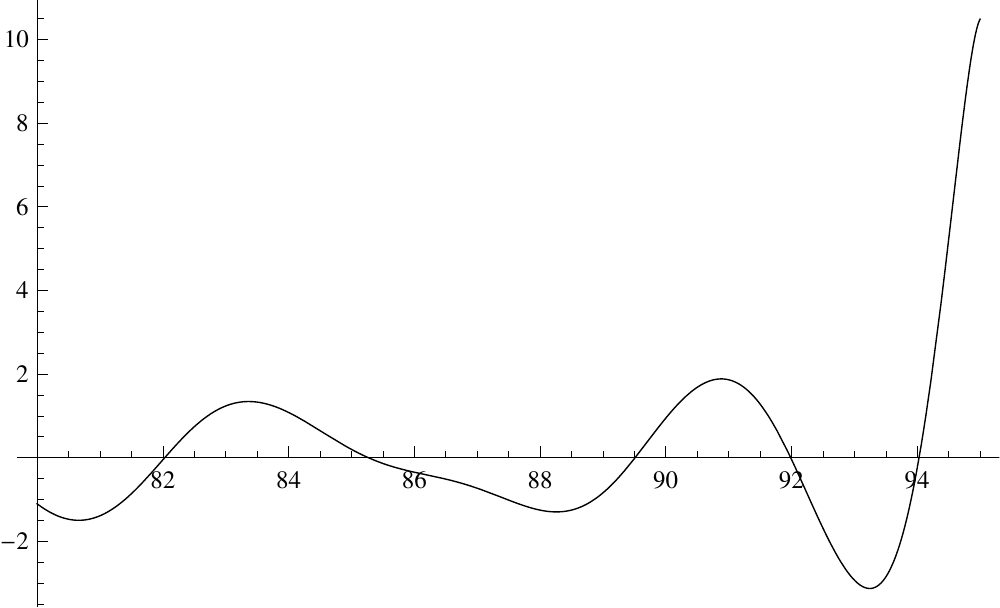}}\quad
{\includegraphics[height=33mm, width=51mm]{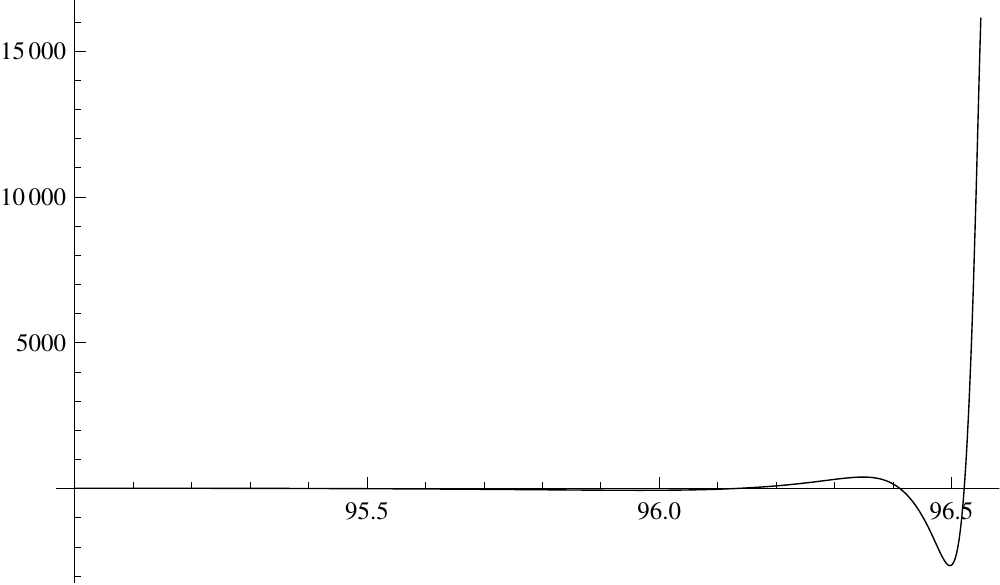}}
\caption{Solution to \eq{maineq2} for $k=3.6$, $[w(0),w'(0),w''(0),w'''(0)]=[0.9,0,0,0]$, $f(s)=s+s^3$.
The three intervals are $t\in[0,80]$, $t\in[80,95]$, $t\in[95,96.55]$}\label{mille}
\end{center}
\end{figure}
Here the solution has ``almost regular'' oscillations between $-1$ and $+1$ for $t\in[0,80]$. Then the amplitude of oscillations nearly doubles in the interval
$[80,93]$ and, suddenly, it violently amplifies after $t=96.5$ until the blow up which seems to occur only slightly later at $t=96.59$. We also refer
to \cite{gazpav,gazpav2,gazpav3} for further plots.\par
We refer to \cite{gazpav,gazpav3} for numerical results and plots of solutions to \eq{maineq2} with nonlinearities $f=f(s)$ having different growths as
$s\to\pm\infty$. In such case, the solution still blows up according to \eq{pazzo} but, although its ``limsup'' and ``liminf'' are respectively $+\infty$
and $-\infty$, the divergence occurs at different rates. We represent this qualitative behavior in Figure \ref{blow}.
\begin{figure}[ht]
\begin{center}
{\includegraphics[height=5cm, width=14cm]{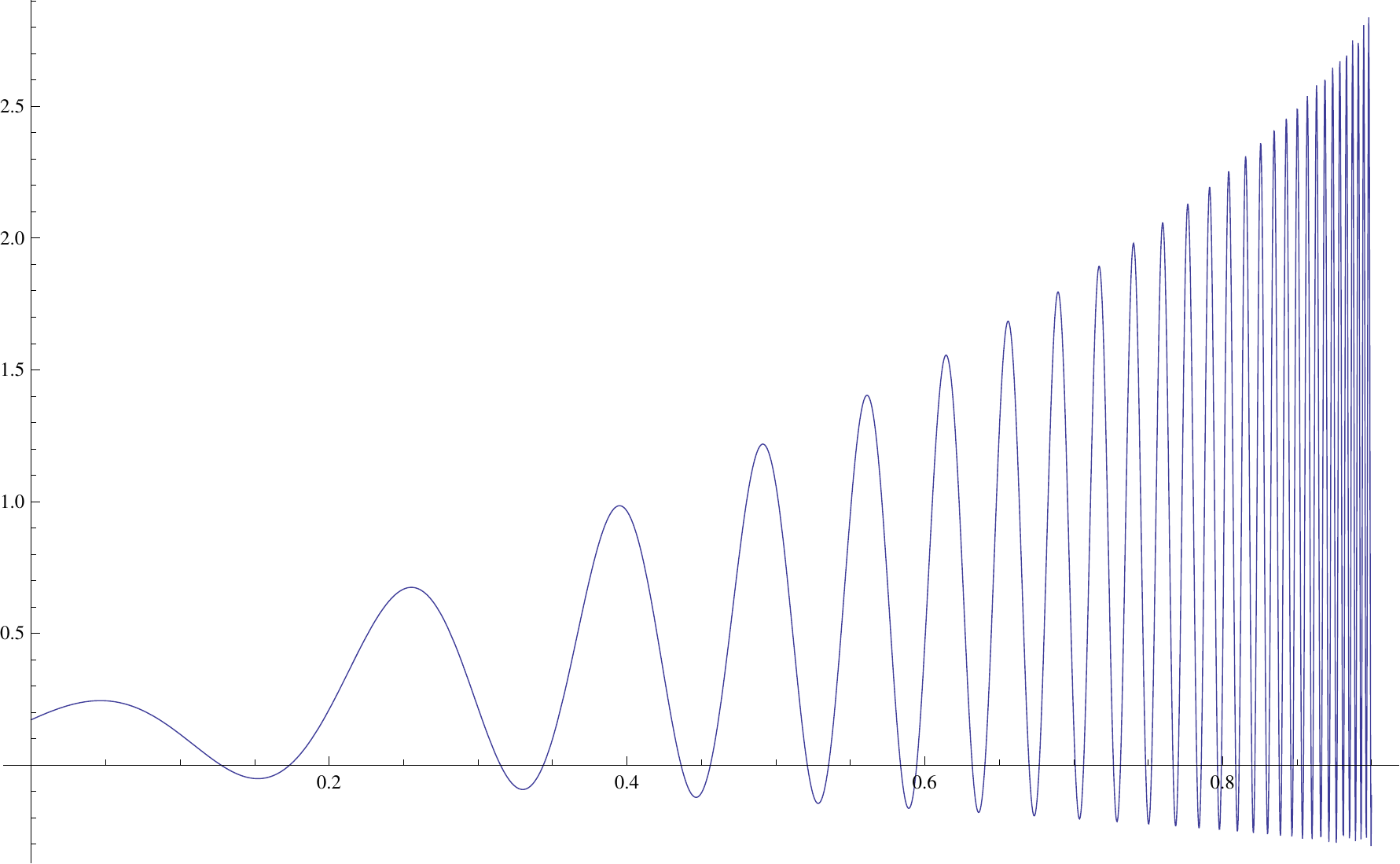}}
\caption{Qualitative blow up for solutions to \eq{maineq2} when $f(s)$ is superlinear at different rates as $s\to\pm\infty$.}\label{blow}
\end{center}
\end{figure}

Traveling waves to \eq{beam2} which propagate at some velocity $c>0$, depending on the elasticity of the material of the beam, solve
\eq{maineq2} with $k=c^2>0$. Further numerical results obtained in \cite{gazpav,gazpav3} suggest that a statement similar to Theorem
\ref{blowup} also holds for $k>0$ and, as expected, that the blow up time $R$ is decreasing with respect to the initial height $w(0)$
and increasing with respect to $k$. Since $k=c^2$ and $c$ represents the velocity of the traveling wave, this means that the time of blow up
is an increasing function of $k$. In turn, since the velocity of the traveling wave depends on the elasticity of the material
used to construct the bridge (larger $c$ means less elastic), this tells us that more the bridge is stiff more it will survive to
exterior forces such as the wind and/or traffic loads.

\begin{problem} {\em Prove Theorem \ref{blowup} when $k>0$. This would allow to show that traveling waves to \eq{beam2} blow up in finite time.
Numerical results in \cite{gazpav,gazpav3} suggest that a result similar to Theorem \ref{blowup} also holds for $k>0$.}\endproof\end{problem}

\begin{problem} {\em Prove that the blow up time of solutions to \eq{maineq2} depends increasingly with respect to $k\in\R$.
The interest of an analytical proof of this fact relies on the important role played by $k$ within the model.}\endproof\end{problem}

\begin{problem} {\em The blow up time $R$ of solutions to \eq{maineq2} is the expectation of life of the oscillating bridge. Provide an estimate of
$R$ in terms of $f$ and of the initial data.}\endproof\end{problem}

\begin{problem} {\em Condition \eq{f2} is a superlinearity assumption which requires that $f$ is bounded both from above and below by the same power $p>1$.
Prove Theorem \ref{blowup} for more general kinds of superlinear functions $f$.}\endproof\end{problem}

\begin{problem} {\em Can assumption \eq{tech} be relaxed? Of course, it cannot be completely removed since the trivial solution $w(t)\equiv0$ is globally
defined, that is, $R=+\infty$. Numerical experiments in \cite{gazpav,gazpav3} could not detect any nontrivial global solution
to \eq{maineq2}.}\endproof\end{problem}

\begin{problem} {\em Study \eq{maineq2} with a damping term: $w''''(t)+kw''(t)+\delta w'(t)+f(w(t))=0$ for some $\delta>0$. Study the competition between
the damping term $\delta w'$ and the nonlinear self-exciting term $f(w)$.}\endproof\end{problem}

Note that Theorems \ref{global} and \ref{blowup} ensure that there exists an increasing sequence $\{z_j\}_{j\in\N}$ such that:\par
$(i)$ $z_j\nearrow R$ as $j\to\infty$;\par
$(ii)$ $w(z_j)=0$ and $w$ has constant sign in $(z_j,z_{j+1})$ for all $j\in\N$.\par
It is also interesting to compare the rate of blow up of the displacement and of the acceleration on these intervals. By slightly modifying the proof of
\cite[Theorem 3]{gazpav3} one can obtain the following result which holds for any $k\in\R$.

\begin{theorem}\label{asymptotics}
Let $k\in \R$, $p>q\ge1$, $\alpha\ge0$, and assume that $f$ satisfies \eqref{fmono} and \eqref{f2}.
Assume that $w=w(t)$ is a local solution to
$$
w''''(t)+kw''(t)+f(w(t))=0\qquad(t\in\R)
$$
which blows up in finite time as $t\nearrow R<+\infty$. Denote by $\{z_j\}$ the increasing sequence of zeros
of $w$ such that $z_j\nearrow R$ as $j\to+\infty$. Then
\neweq{estimate}
\int_{z_j}^{z_{j+1}}w(t)^2\, dt\ \ll\ \int_{z_j}^{z_{j+1}}w''(t)^2\, dt\ ,\qquad
\int_{z_j}^{z_{j+1}}w'(t)^2\, dt\ \ll\ \int_{z_j}^{z_{j+1}}w''(t)^2\, dt
\endeq
as $j\to\infty$. Here, $g(j)\ll\psi(j)$ means that $g(j)/\psi(j)\to0$ as $j\to\infty$.
\end{theorem}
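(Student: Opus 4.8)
The plan is to compare the three energy‑type integrals over a single nodal interval $(z_j,z_{j+1})$ by exploiting the equation itself together with the superlinear growth of $f$ from below in \eqref{f2}. First I would multiply \eqref{maineq2} by $w$ and integrate over $(z_j,z_{j+1})$. Since $w(z_j)=w(z_{j+1})=0$, two integrations by parts give
\begin{equation}\label{pp1}
\int_{z_j}^{z_{j+1}}w''(t)^2\,dt \;-\; k\int_{z_j}^{z_{j+1}}w'(t)^2\,dt \;+\;\int_{z_j}^{z_{j+1}}f(w(t))\,w(t)\,dt \;=\;\Big[w(t)w'''(t)+kw(t)w'(t)-w'(t)w''(t)\Big]_{z_j}^{z_{j+1}},
\end{equation}
and because $w$ vanishes at both endpoints the bracket collapses to $-\big[w'(t)w''(t)\big]_{z_j}^{z_{j+1}}$, i.e. only boundary terms in $w'w''$ survive. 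The key structural fact — which I expect can be extracted from the proof of \cite[Theorem 3]{gazpav3} or re‑derived by a Lyapunov/energy argument — is that the ``interior'' quantities dominate these boundary remainders as $j\to\infty$; more precisely, that $\int_{z_j}^{z_{j+1}}f(w)w\,dt$ and $\int_{z_j}^{z_{j+1}}w''^2\,dt$ both diverge, while the boundary term is of lower order. Granting that, \eqref{pp1} already yields that $\int w''^2$ and $\int f(w)w$ are comparable up to the $k\int w'^2$ term.

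Next I would bring in the lower growth bound $f(s)s\ge\rho|s|^{p+1}$ with $p>1$ from \eqref{f2}, which gives
\begin{equation}\label{pp2}
\rho\int_{z_j}^{z_{j+1}}|w(t)|^{p+1}\,dt \;\le\; \int_{z_j}^{z_{j+1}}f(w(t))\,w(t)\,dt .
\end{equation}
On the other hand, on each nodal interval $w$ vanishes at the endpoints, so a Poincaré‑type inequality controls $\int w^2$ and $\int w'^2$ by $\int w''^2$ \emph{with a constant depending on the interval length} $z_{j+1}-z_j$. Since the blow‑up occurs in finite time with infinitely many oscillations, $z_{j+1}-z_j\to0$, and the Poincaré constants for $\int w^2\le C(z_{j+1}-z_j)^4\int w''^2$ and $\int w'^2\le C(z_{j+1}-z_j)^2\int w''^2$ then $\to0$. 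That already gives $\int w^2\ll\int w''^2$ and $\int w'^2\ll\int w''^2$ \emph{provided} one also knows that the shrinking of the intervals is not compensated by a faster blow‑up of the $w''^2$‑mass relative to the $w^2$‑mass — which is exactly where \eqref{pp1}–\eqref{pp2} and the superlinearity $p>1$ enter: combining \eqref{pp1}, \eqref{pp2} and Poincaré, $\int w''^2 \gtrsim \int |w|^{p+1} - |k|\,C(z_{j+1}-z_j)^2\int w''^2$, so for $j$ large $\int w''^2 \gtrsim \int|w|^{p+1}$, and then Hölder on the short interval bounds $\int w^2 \le (z_{j+1}-z_j)^{(p-1)/(p+1)}\big(\int|w|^{p+1}\big)^{2/(p+1)}$, which combined with the divergence of $\int|w|^{p+1}$ forces the desired smallness of the ratio. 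The argument for $\int w'^2$ is analogous, using that $\int w'^2 \le (z_{j+1}-z_j)^2\int w''^2$ directly.

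The main obstacle, as I see it, is \emph{controlling the boundary terms} $w'(z_j)w''(z_j)$ relative to the diverging interior integrals, and more generally making rigorous the claim that $z_{j+1}-z_j\to0$ at a rate that cooperates with the growth of $\int w''^2$. This is precisely the technical heart of \cite[Theorem 3]{gazpav3}, and the statement here is advertised as following ``by slightly modifying'' that proof; so the realistic plan is to reuse the quantitative control on the oscillation rate established there (essentially: the nonlinear frequency tends to infinity, and between consecutive zeros the solution's $C^2$‑norm is comparable to a power of $(z_{j+1}-z_j)^{-1}$), and then feed it into the elementary Poincaré/Hölder estimates above. The case $k>0$, which \eqref{pp1} handles on the same footing as $k\le 0$ because the sign of $k$ only affects a term that is ultimately absorbed, is the reason the theorem is stated for all $k\in\R$; I would simply keep $k$ generic throughout and note that the $k\int w'^2$ term is lower order by the Poincaré estimate with constant $C(z_{j+1}-z_j)^2\to0$.
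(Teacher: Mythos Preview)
Your core observation---Poincar\'e on the shrinking nodal intervals---is correct and in fact already finishes the proof by itself; the rest of your write-up (the multiplied equation \eqref{pp1}, the growth bound \eqref{pp2}, the H\"older step, the boundary-term discussion) is unnecessary. Since $w(z_j)=w(z_{j+1})=0$, Rolle gives $w'(\xi_j)=0$ for some $\xi_j\in(z_j,z_{j+1})$, and then writing $w'(t)=\int_{\xi_j}^t w''$ and $w(t)=\int_{z_j}^t w'$ and applying Cauchy--Schwarz yields
\[
\int_{z_j}^{z_{j+1}} w'(t)^2\,dt \;\le\; L_j^{\,2}\int_{z_j}^{z_{j+1}} w''(t)^2\,dt,
\qquad
\int_{z_j}^{z_{j+1}} w(t)^2\,dt \;\le\; L_j^{\,4}\int_{z_j}^{z_{j+1}} w''(t)^2\,dt,
\]
with $L_j:=z_{j+1}-z_j$. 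Since the $z_j$ are increasing and converge to the finite blow-up time $R$, one has $L_j\to0$, and the two ratios in \eqref{estimate} are bounded above by $L_j^{\,4}$ and $L_j^{\,2}$, hence tend to zero. That is the whole argument; no feature of $f$ beyond the existence of infinitely many zeros accumulating at $R$ (guaranteed by Theorem~\ref{global}) is used, and $k$ never enters.

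Your sentence about ``the shrinking of the intervals not being compensated by a faster blow-up of the $w''^2$-mass relative to the $w^2$-mass'' reveals a confusion: the Poincar\'e inequality bounds the \emph{ratio} $\int w^2/\int w''^2$ by $L_j^{\,4}$ outright, so there is nothing to compensate. Likewise, the boundary terms $w'(z_j)w''(z_j)$ are simply irrelevant here because you never need identity \eqref{pp1} at all. The paper itself does not supply a proof but refers to a slight modification of \cite[Theorem~3]{gazpav3}; your Poincar\'e argument is more elementary than whatever that modification amounts to, and it works uniformly in $k\in\R$ for free.
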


The estimate \eq{estimate}, clearly due to the superlinear term, has a simple interpretation in terms of comparison between blowing up energies,
see Section \ref{energies}.

\begin{remark}\label{pde} {\em Equation \eq{maineq2} also arises in several different contexts, see the book by Peletier-Troy \cite{pt} where
one can find some other physical models, a survey of existing results, and further references. Moreover, besides \eq{beam2},
\eq{maineq2} may also be fruitfully used to study some other partial differential equations. For instance, one can consider nonlinear elliptic equations such as
$$
\Delta^2u+e^u=\frac{1}{|x|^4}\qquad\mbox{in }\R^4\setminus\{0\}\ ,
$$
\neweq{critical}
\Delta^2 u+|u|^{8/(n-4)}u=0\mbox{ in }\R^n\ (n\ge5),\qquad\Delta\Big(|x|^2\Delta u\Big)+|x|^2|u|^{8/(n-2)}u=0\mbox{ in }\R^n\ (n\ge3);
\endeq
it is known (see, e.g.\ \cite{gazgruswe}) that the Green function for some fourth order elliptic problems displays oscillations, differently from
second order problems. Furthermore, one can also consider the semilinear parabolic equation
$$u_t+\Delta ^2u=|u|^{p-1}u\mbox{ in }\mathbb{R}_{+}^{n+1}\ ,\qquad u(x,0)=u_0(x)\mbox{ in }\mathbb{R}^{n}$$
where $p>1+4/n$ and $u_0$ satisfies suitable assumptions. It is shown in \cite{fgg,gg2} that the linear biharmonic heat operator has an
``eventual local positivity'' property: for positive initial data $u_0$ the solution to the linear problem with no source is eventually
positive on compact subsets of $\R^n$ but negativity can appear at any time far away from the origin. This phenomenon is due to the
sign changing properties, with infinite oscillations, of the biharmonic heat kernels.
We also refer to \cite{bfgk,gazpav3} for some results about the above equations and for the explanation of how they can be reduced to \eq{maineq2} and, hence,
how they display self-excited oscillations.\endproof}
\end{remark}

\begin{problem} {\em For any $q>0$ and parameters $a,b,k\in\R$, $c\ge0$, study the equation
\neweq{subcrit}
w''''(t)+aw'''(t)+kw''(t)+bw'(t)+cw(t)+|w(t)|^qw(t)=0\qquad(t\in\R)\ .
\endeq
Any reader who is familiar with the second order Sobolev space $H^2$ recognises the critical exponent in the first equation
in \eq{critical}. In view of Liouville-type results in \cite{ambrosio} when $q\le8/(n-4)$, it would be interesting to study the equation
$\Delta^2 u+|u|^qu=0$ with the same technique. The radial form of this equation may be written as \eq{maineq2} only when $q=8/(n-4)$ since for other values
of $q$ the transformation in \cite{GG} gives rise to the appearance of first and third order derivatives as in \eq{subcrit}: this motivates \eq{subcrit}.
The values of the parameters corresponding to the equation $\Delta^2 u+|u|^qu=0$ can be found in \cite{GG}.}\endproof\end{problem}

Our target is now to reproduce the self-excited oscillations found in Theorem \ref{blowup} in a suitable second order system.
Replace $\sin\theta\cong\theta$ and $\cos\theta\cong1$, and put $x=\ell\theta$. After these transformations, the McKenna system \eq{coupled} reads
\neweq{truesystem}
\left\{\begin{array}{ll}
x''+\omega^2 f(y+x)-\omega^2 f(y-x)=0\\
y''+f(y+x)+f(y-x)=0\ .
\end{array}\right.\endeq

We further modify \eq{truesystem}; for suitable values of the parameters $\beta$ and $\delta$, we consider the system
\neweq{miosystxy}
\left\{\begin{array}{ll}
x''-f(y-x)+\beta(y+x)=0\\
y''-f(y-x)+\delta(y+x)=0
\end{array}\right.
\endeq
which differs from \eq{truesystem} in two respects: the minus sign in front of $f(y-x)$ in the second equation and the other restoring
force $f(y+x)$ being replaced by a linear term. To \eq{miosystxy} we associate the initial value problem
\neweq{cauchy}
x(0)=x_0\, ,\ x'(0)=x_1\, ,\ y(0)=y_0\, ,\ y'(0)=y_1\ .
\endeq
The following statement holds.

\begin{theorem}\label{oscill}
Assume that $\beta<\delta\le-\beta$ (so that $\beta<0$). Assume also that $f(s)=\sigma s+cs^2+ds^3$ with $d>0$ and $c^2\le2d\sigma$.
Let $(x_0,y_0,x_1,y_1)\in\R^4$ satisfy
\neweq{initial}
(3\beta-\delta)x_0y_1+(3\delta-\beta)x_1y_0>(\beta+\delta)(x_0x_1+y_0y_1)\ .
\endeq
If $(x,y)$ is a local solution to \eqref{miosystxy}-\eqref{cauchy} in a neighborhood of $t=0$, then $(x,y)$ blows up in finite time for $t>0$ with
self-excited oscillations, that is, there exists $R\in(0,+\infty)$ such that
$$\liminf_{t\to R}x(t)=\liminf_{t\to R}y(t)=-\infty\qquad\mbox{and}\qquad\limsup_{t\to R}x(t)=\limsup_{t\to R}y(t)=+\infty\, .$$
\end{theorem}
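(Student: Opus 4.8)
The idea is to reduce Theorem \ref{oscill} to the scalar blow‑up result Theorem \ref{blowup}. First I would diagonalise the linear part of system \eqref{miosystxy}. Since the two equations share the same nonlinear term $f(y-x)$, subtracting them kills $f$: setting $u=y-x$ one gets $u''+(\delta-\beta)(y+x)=0$, a linear equation. It is more profitable to look for a combination that isolates $f$. Consider the quantity $w:=a x+b y$ for suitable constants $a,b$; then
$$w''=a x''+b y''=(a+b)f(y-x)-\big(a\beta+b\delta\big)(y+x).$$
Choosing $a,b$ so that $a\beta+b\delta$ is proportional to $a x+b y=w$ is impossible in general because $y+x$ is a different linear form; instead I would introduce the two natural linear coordinates $p:=y-x$ (the argument of $f$) and $q:=y+x$, rewrite \eqref{miosystxy} as
$$p''=(\delta-\beta)q,\qquad q''=2f(p)-(\beta+\delta)q,$$
and then eliminate $q$: from the first equation $q=p''/(\delta-\beta)$ (note $\delta-\beta>0$ by hypothesis), so $q''=p''''/(\delta-\beta)$, and substituting,
$$\frac{p''''}{\delta-\beta}=2f(p)-\frac{\beta+\delta}{\delta-\beta}\,p'',$$
i.e.
$$p''''+(\beta+\delta)\,p''-2(\delta-\beta)f(p)=0.$$
This is exactly \eqref{maineq2} with $k=\beta+\delta$ and nonlinearity $\tilde f(p)=-2(\delta-\beta)f(p)$. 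I would then check the hypotheses of Theorem \ref{blowup}: $k=\beta+\delta\le 0$ follows from $\delta\le-\beta$; and $\tilde f$ inherits \eqref{fmono}–\eqref{f2} from $f$. Here the assumptions $d>0$ and $c^2\le 2d\sigma$ are precisely what forces $f(s)=\sigma s+cs^2+ds^3$ to be monotone increasing with $f(s)s>0$ for $s\neq0$ (the discriminant condition makes $f'(s)=\sigma+2cs+3ds^2$ have, at worst, a double root, so $f'\ge0$), of cubic growth, with $|\tilde f''|$ bounded away from $0$ at infinity; the factor $2(\delta-\beta)>0$ preserves all signs. So $p$ satisfies a fourth‑order equation to which Theorem \ref{blowup} applies.

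Next I must translate the initial condition \eqref{initial} into the trigger condition \eqref{tech} for $p$, namely
$$p'(0)p''(0)-p(0)p'''(0)-k\,p(0)p'(0)>0.$$
In the $p,q$ variables we have $p=y-x$, $p'=y'-x'$, $p''=(\delta-\beta)q=(\delta-\beta)(y+x)$, and $p'''=(\delta-\beta)q'=(\delta-\beta)(y'+x')$, all evaluated at $t=0$. Substituting and using $k=\beta+\delta$, the left‑hand side of \eqref{tech} becomes
$$(\delta-\beta)\Big[(y_1-x_1)(y_0+x_0)-(y_0-x_0)(y_1+x_1)\Big]-(\beta+\delta)(y_1-x_1)(y_0+x_0)\cdot\frac{\,?\,}{\ }$$
— here I would just expand carefully; the bracketed term simplifies to $2(x_0y_1-x_1y_0)$, and collecting everything should reproduce, after multiplication by the positive factor $(\delta-\beta)$, exactly the asymmetric inequality \eqref{initial}. (I fully expect the coefficients $3\beta-\delta$, $3\delta-\beta$, $\beta+\delta$ to drop out of this computation; that is the ``design'' behind the choice of \eqref{initial}.) Thus \eqref{initial} $\iff$ \eqref{tech} for $p$, and Theorem \ref{blowup} yields $R\in(0,\infty)$ with $\liminf_{t\to R}p(t)=-\infty$, $\limsup_{t\to R}p(t)=+\infty$.

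Finally I must promote the blow‑up of $p=y-x$ to the simultaneous blow‑up of $x$ and $y$ with the stated $\pm\infty$ behaviour. From $q=p''/(\delta-\beta)$ and the fact (guaranteed by Theorem \ref{global}(i), which underlies Theorem \ref{blowup}) that the blow‑up is ``wide and thinning'', I would invoke the companion estimates: along the sequence of zeros $z_j\nearrow R$ of $p$, Theorem \ref{asymptotics} gives $\int p^2\ll\int (p'')^2$, i.e.\ $q=p''/(\delta-\beta)$ dominates $p$ in an integral sense, so $q$ also oscillates unboundedly; more directly, since $p''''$ is controlled by $f(p)$ and $p''$, and $p$ oscillates between $\pm\infty$, so does $p''$, hence so does $q$. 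Then $x=(q-p)/2$ and $y=(q+p)/2$, and one checks that the liminf/limsup of $x$ and of $y$ are each $\mp\infty$: the point is that $p$ and $q$ blow up ``in phase'' in the sense that on the last oscillations the sign of $q$ agrees with the sign of $p$ up to lower‑order corrections — this follows because $q=p''/(\delta-\beta)$ and near a positive spike of $p$ the function $p$ is concave‑then‑convex in a way that makes the dominant contribution of $p''$ positive on the bulk of the spike. Making this last phase‑alignment argument rigorous is, I expect, the main obstacle: one needs the qualitative shape of the blowing‑up oscillations (established in \cite{gazpav3}) to conclude that $\limsup x=\limsup y=+\infty$ and $\liminf x=\liminf y=-\infty$ rather than merely that $|x|+|y|\to\infty$ along some sequence. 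Everything else is the bookkeeping of a linear change of variables.
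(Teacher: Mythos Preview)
Your reduction is exactly the paper's: set $w=y-x$, $z=y+x$, eliminate $z$, and apply Theorem~\ref{blowup} to the resulting scalar equation. One sign slip: from \eqref{miosystxy} you get $p''=-(\delta-\beta)q$, not $+(\delta-\beta)q$, so the fourth-order equation is $w''''+(\beta+\delta)w''+2(\delta-\beta)f(w)=0$ and the effective nonlinearity $2(\delta-\beta)f$ has the \emph{correct} sign for \eqref{f}--\eqref{fmono}--\eqref{f2}. The translation of \eqref{initial} into \eqref{tech} is, as you anticipate, a direct expansion.

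The genuine gap is the final step, which you correctly identify as the obstacle. Your phase-alignment heuristic via Theorem~\ref{asymptotics} is not how the paper proceeds, and it is not clear it can be made rigorous as stated. The paper's device is a \emph{first integral}: the quantity
\[
E(t)=\frac{\beta+\delta}{2}\,w'(t)^2+w'(t)w'''(t)+2(\delta-\beta)F(w(t))-\frac12\,w''(t)^2
\]
(with $F'=f$) is constant along solutions. At a local maximum $m_j$ of $w$ one has $w'(m_j)=0$ and $z(m_j)=w''(m_j)/(\beta-\delta)\ge0$, so the conservation law collapses to
\[
\overline{E}=2(\delta-\beta)F(w(m_j))-\frac{(\delta-\beta)^2}{2}\,z(m_j)^2.
\]
Since $F$ has quartic growth and $w(m_j)\to+\infty$, this forces $z(m_j)\to+\infty$, indeed $z(m_j)\sim c\,w(m_j)^2$. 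Then $x(m_j)=\tfrac12(z(m_j)-w(m_j))\to+\infty$ and $y(m_j)=\tfrac12(z(m_j)+w(m_j))\to+\infty$ because the quadratic term dominates. The same argument on local minima gives the $-\infty$ conclusions. The paper runs this as a contradiction argument (assume $x\le K$, contradict the quartic-vs-quadratic balance in the identity above), but the mechanism is the same: the first integral pins down the relative growth of $z$ versus $w$ and removes any need to analyse the shape of the oscillations. This energy identity is the missing idea in your sketch.
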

\begin{proof} After performing the change of variables \eq{change}, system \eq{miosystxy} becomes
$$w''+(\delta-\beta)z=0\ ,\qquad z''-2f(w)+(\beta+\delta)z=0$$
which may be rewritten as a single fourth order equation
\neweq{mia4}
w''''(t)+(\beta+\delta)w''(t)+2(\delta-\beta)f(w(t))=0\ .
\endeq
Assumption \eq{initial} reads
$$w'(0)w''(0)-w(0)w'''(0)-(\beta+\delta)w(0)w'(0)>0\, .$$
Furthermore, in view of the above assumptions, $f$ satisfies \eq{fmono}-\eq{f2} with $\rho=d/2$, $p=3$, $\alpha=2\sigma$, $q=1$, $\beta=3d$.
Whence, Theorem \ref{asymptotics} states that $w$ blows up in finite time for $t>0$ and that there exists $R\in(0,+\infty)$ such that
\neweq{puzzo}
\liminf_{t\to R}w(t)=-\infty\qquad\mbox{and}\qquad\limsup_{t\to R}w(t)=+\infty\, .
\endeq

Next, we remark that \eq{mia4} admits a first integral, namely
\begin{eqnarray}
E(t) &:=& \frac{\beta+\delta}{2}\,w'(t)^2+w'(t)w'''(t)+2(\delta-\beta)F(w(t))-\frac{1}{2}\,w''(t)^2 \notag \\
\ &=& \frac{\beta+\delta}{2}\,w'(t)^2+(\beta-\delta)w'(t)z'(t)+2(\delta-\beta)F(w(t))-\frac{(\beta-\delta)^2}{2}\,z(t)^2\equiv\overline{E}\ , \label{E}
\end{eqnarray}
for some constant $\overline{E}$. By \eq{puzzo} there exists an increasing sequence $m_j\to R$ of local maxima of $w$ such that
$$z(m_j)=\frac{w''(m_j)}{\beta-\delta}\ge0\ ,\quad w'(m_j)=0\ ,\quad w(m_j)\to+\infty\mbox{ as }j\to\infty\ .$$
By plugging $m_j$ into the first integral \eq{E} we obtain
$$\overline{E}=E(m_j)=2(\delta-\beta)F(w(m_j))-\frac{(\beta-\delta)^2}{2}\,z(m_j)^2$$
which proves that $z(m_j)\to+\infty$ as $j\to+\infty$. We may proceed similarly in order to show that $z(\mu_j)\to-\infty$ on a sequence $\{\mu_j\}$ of local
minima of $w$. Therefore, we have
$$
\liminf_{t\to R}z(t)=-\infty\qquad\mbox{and}\qquad\limsup_{t\to R}z(t)=+\infty\, .
$$

Assume for contradiction that there exists $K\in\R$ such that $x(t)\le K$ for all $t<R$. Then, recalling \eq{change}, on the above sequence $\{m_j\}$ of local
maxima for $w$, we would have $y(m_j)-K\ge y(m_j)-x(m_j)=w(m_j)\to+\infty$ which is incompatible with \eq{E} since
$$2(\delta-\beta)F(y(m_j)-x(m_j))-\frac{(\beta-\delta)^2}{2}\, (y(m_j)+x(m_j))^2\equiv\overline{E}$$
and $F$ has growth of order 4 with respect to its divergent argument. Similarly, by arguing on the sequence $\{\mu_j\}$, we rule out the possibility that there exists
$K\in\R$ such that $x(t)\ge K$ for ll $t<R$. Finally, by changing the role of $x$ and $y$ we find that also $y(t)$ is unbounded both from above and below
as $t\to R$. This completes the proof.\end{proof}\smallskip

\begin{remark} {\em Numerical results in \cite{gazpav3} suggest that the assumption $\delta\le-\beta$ is not necessary to obtain \eq{puzzo}.
So, most probably, Theorem \ref{oscill} and the results of this section hold true also without this assumption.\endproof}\end{remark}

A special case of function $f$ satisfying the assumptions of Theorem \ref{oscill} is $f_\eps(s)=s+\eps s^3$ for any $\eps>0$.
We wish to study the situation when the problem tends to become linear, that is, when $\eps\to0$. Plugging such $f_\eps$ into \eq{miosystxy} gives the system
\neweq{fe}
\left\{\begin{array}{ll}
x''+(\beta+1)x+(\beta-1)y+\eps(x-y)^3=0\\
y''+(\delta+1)x+(\delta-1)y+\eps(x-y)^3=0
\end{array}\right.
\endeq
so that the limit linear problem obtained for $\eps=0$ reads
\neweq{f0}
\left\{\begin{array}{ll}
x''+(\beta+1)x+(\beta-1)y=0\\
y''+(\delta+1)x+(\delta-1)y=0\ .
\end{array}\right.
\endeq
The theory of linear systems tells us that the shape of the solutions to \eq{f0} depends on the signs of the parameters
$$A=\beta+\delta\ ,\quad B=2(\delta-\beta)\ ,\quad \Delta=(\beta+\delta)^2+8(\beta-\delta)\ .$$
Under the same assumptions of Theorem \ref{oscill}, for \eq{f0} we have $A\le0$ and $B>0$ but the sign of $\Delta$ is not known a priori and three different cases may occur.\par
$\bullet$ If $\Delta<0$ (a case including also $A=0$), then we have exponentials times trigonometric functions so either we have self-excited oscillations
which increase amplitude as $t\to\infty$ or we have damped oscillations which tend to vanish as $t\to\infty$.
Consider the case $\delta=-\beta=1$ and $(x_0,y_0,x_1,y_1)=(1,0,1,-1)$, then \eq{initial} is fulfilled and Theorem \ref{oscill} yields

\begin{corollary}
For any $\eps>0$ there exists $R_\eps>0$ such that the solution $(x^\eps,y^\eps)$ to the Cauchy problem
\neweq{feps}
\left\{\begin{array}{ll}
x''-2y+\eps(x-y)^3=0\\
y''+2x+\eps(x-y)^3=0\\
x(0)=1,\ y(0)=0,\ x'(0)=1,\ y'(0)=-1
\end{array}\right.
\endeq
blows up as $t\to R_\eps$ and satisfies
$$
\liminf_{t\to R_\eps}x^\eps(t)=\liminf_{t\to R_\eps}y^\eps(t)=-\infty\qquad\mbox{and}\qquad\limsup_{t\to R_\eps}x^\eps(t)=\limsup_{t\to R_\eps}y^\eps(t)=+\infty\, .
$$
\end{corollary}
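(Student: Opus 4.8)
The plan is to recognise the Cauchy problem \eqref{feps} as a special case of \eqref{miosystxy}--\eqref{cauchy}, equivalently of \eqref{fe}, and then to quote Theorem \ref{oscill} directly. Concretely, I would choose $\beta=-1$, $\delta=1$, the nonlinearity $f=f_\eps$ with $f_\eps(s)=s+\eps s^3$, and the initial data $(x_0,y_0,x_1,y_1)=(1,0,1,-1)$; then the coefficients in \eqref{fe} are $\beta+1=0$, $\beta-1=-2$, $\delta+1=2$, $\delta-1=0$, so \eqref{fe} is precisely the system in \eqref{feps}. Since the right-hand side of \eqref{feps} is a polynomial, hence locally Lipschitz, vector field, the Cauchy--Lipschitz theorem provides a unique local solution $(x^\eps,y^\eps)$ near $t=0$, so the object in the statement is well defined.

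The only real task is then to check that the three hypotheses of Theorem \ref{oscill} hold for this data. First, the structural condition $\beta<\delta\le-\beta$ becomes $-1<1\le1$: it holds (with $\beta=-1<0$), $\delta=-\beta$ being the borderline case explicitly admitted. Second, $f_\eps(s)=\sigma s+cs^2+ds^3$ with $\sigma=1$, $c=0$, $d=\eps$, and the required conditions $d>0$ and $c^2\le2d\sigma$ read $\eps>0$ and $0\le2\eps$, true for every $\eps>0$. Third, inequality \eqref{initial}, i.e.\ $(3\beta-\delta)x_0y_1+(3\delta-\beta)x_1y_0>(\beta+\delta)(x_0x_1+y_0y_1)$, evaluates to $(-4)(1)(-1)+(4)(1)(0)>0$, that is $4>0$. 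With all hypotheses verified, Theorem \ref{oscill} yields a finite $R\in(0,+\infty)$ — which I would relabel $R_\eps$ to emphasise that the equation, and hence the blow-up instant, depends on $\eps$ — such that the liminf of $x^\eps$ and $y^\eps$ is $-\infty$ and the limsup is $+\infty$ as $t\to R_\eps$. This is exactly the assertion of the Corollary.

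There is no genuine analytic difficulty: once Theorem \ref{oscill} is in hand, the Corollary is a bookkeeping check for the explicit choice above, and the substantive work lives in Theorems \ref{oscill} and \ref{asymptotics}. The two points worth flagging are, first, keeping track of signs in $\beta<\delta\le-\beta$ since $\beta$ is negative and one is at the boundary $\delta=-\beta$; and second, that the statement is deliberately non-uniform in $\eps$: a blow-up time $R_\eps$ is obtained for each fixed $\eps>0$, but nothing is claimed about $R_\eps$ as $\eps\to0^+$. Indeed, at $\eps=0$ the system collapses to the linear system \eqref{f0} with $\delta=-\beta=1$, which belongs to the regime $\Delta<0$, $A=0$ discussed just before the Corollary and therefore has no finite-time blow up at all; quantifying the singular limit $\eps\to0^+$ would need a separate argument and is not part of the Corollary.
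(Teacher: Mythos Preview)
Your proposal is correct and follows exactly the approach of the paper: the Corollary is obtained by specialising Theorem \ref{oscill} to $\beta=-1$, $\delta=1$, $f_\eps(s)=s+\eps s^3$, and the given initial data, after checking that \eqref{initial} holds. The paper does this in a single sentence before the Corollary, while you spell out the verification of all three hypotheses in full; the substance is identical.
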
\smallskip

A natural conjecture, supported by numerical experiments, is that $R_\eps\to\infty$ as $\eps\to0$.
For several $\eps>0$, we plotted the solution to \eq{feps} and the pictures all looked like Figure \ref{plot1}.
\begin{figure}[ht]
\begin{center}
{\includegraphics[height=38mm, width=50mm]{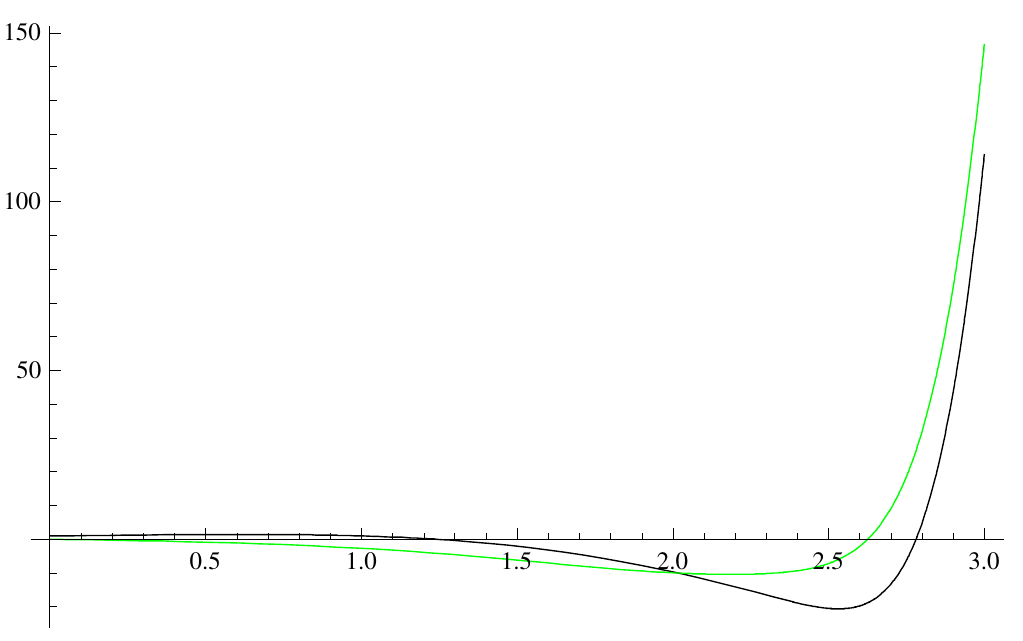}\quad \includegraphics[height=38mm, width=50mm]{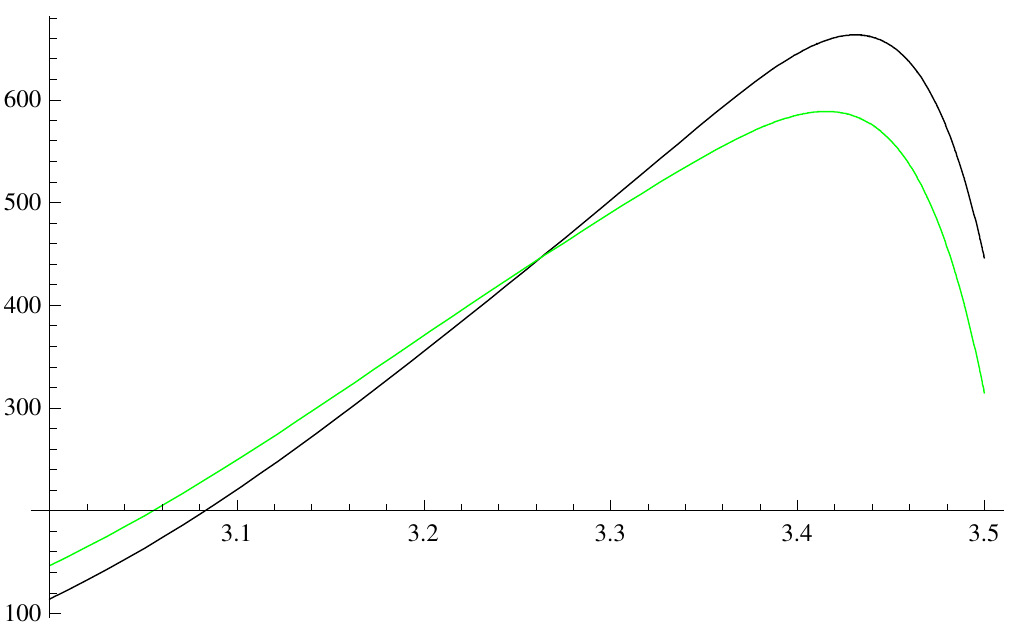}\quad
\includegraphics[height=38mm, width=50mm]{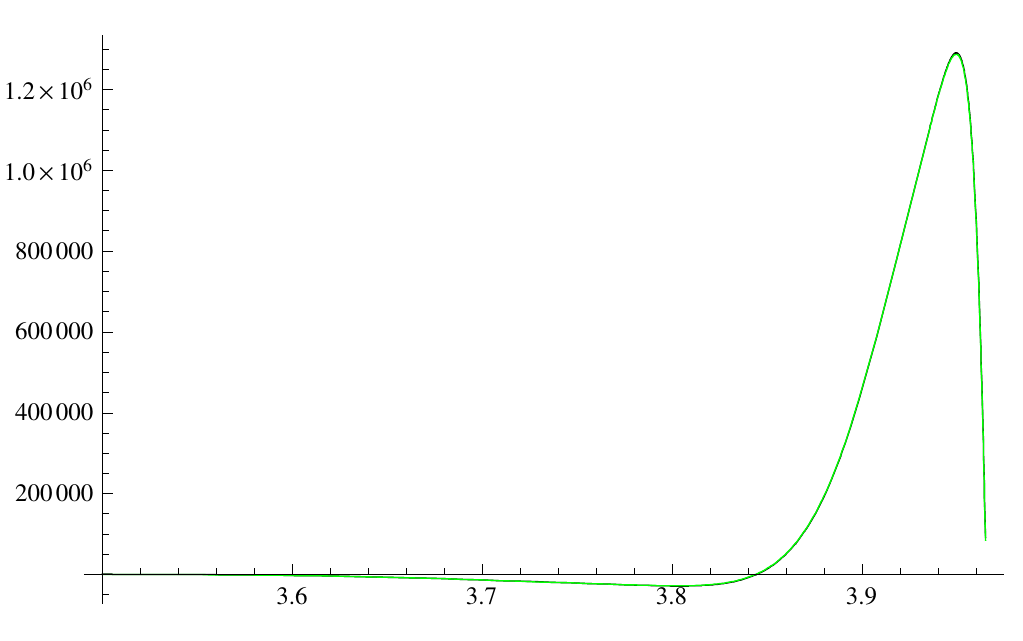}}
\caption{The solution $x^\eps$ (black) and $y^\eps$ (green) to \eq{feps} for $\eps=0.1$.}\label{plot1}
\end{center}
\end{figure}
When $\eps=0.1$ the blow up seems to occur at $R_\eps=4.041$. Notice that $x^\eps$ and $y^\eps$ ``tend to become the same'', in the third picture
they are indistinguishable. After some time, when wide oscillations amplifies, $x^\eps$ and $y^\eps$ move almost synchronously. When $\eps=0$, the solution
to \eq{feps} is explicitly given by $x^0(t)=e^t\cos(t)$ and $y^0(t)=-e^t\sin(t)$, thereby displaying oscillations blowing up in infinite time similar to
those visible in \eq{scann}.\par
If we replace the Cauchy problem in \eq{feps} with
$$x(0)=1,\ y(0)=0,\ x'(0)=-1,\ y'(0)=1$$
then \eq{initial} is not fulfilled. However, for any $\eps>0$ that we tested, the corresponding numerical solutions looked like in Figure \ref{plot1}.
In this case, the limit problem with $\eps=0$ admits as solutions $x^0(t)=e^{-t}\cos(t)$ and $y^0(t)=e^{-t}\sin(t)$ which do exhibit oscillations but,
now, strongly damped.\par
Let us also consider the two remaining limit systems which, however, do not display oscillations.\par
$\bullet$ If $\Delta=0$, since $A\le0$, there are no trigonometric functions in the limit case \eq{f0}.\par
$\bullet$ If $\Delta>0$, then necessarily $A<0$ since $B>0$, and hence only exponential functions are involved: the solution to \eq{f0} may blow up
in infinite time or vanish at infinity.\par\smallskip
The above results explain why we believe that \eq{scann} is not suitable to display self-excited oscillations as the ones which appeared for the TNB.
Since it has only two degrees of freedom, it fails to consider both vertical and torsional oscillations which, on the contrary, are visible in the
McKenna-type system \eq{miosystxy}. We have seen in Theorem \ref{oscill} that destructive self-excited oscillations may blow up in finite time, something very
similar to what may be observed in \cite{tacoma}. Hence, \eq{miosystxy} shows more realistic self-excited oscillations than \eq{scann}.\par
Although the blow up occurs at $t=4.04$, the solution plotted in Figure \ref{plot1} is relatively small until $t=3.98$.
This, together with the behavior displayed in Figures \ref{duemila} and \ref{mille}, allows us to conclude that
\begin{center}
\mbox{\bf in nonlinear systems, self-excited oscillations appear suddenly, without any intermediate stage.}
\end{center}
The material presented in this section also enables us to conclude that
\begin{center}
\mbox{\bf the linear case cannot be seen as a limit situation of the nonlinear case}
\end{center}
since the behavior of the solution to \eq{f0} depends on $\beta$, $\delta$, and on the initial conditions, while nothing can be deduced from the sequence of
solutions $(x^\eps,y^\eps)$ to problem \eq{fe} as $\eps\to0$ because these solutions all behave similarly independently of $\beta$ and $\delta$. Furthermore,
the solutions to the limit problem \eq{f0} may or may not exhibit oscillations and if they do, these oscillations may be both of increasing amplitude or of vanishing
amplitude as $t\to+\infty$. All this shows that linearisation may give misleading and unpredictable answers.\par\smallskip
In this section we have seen that the blow up of solutions to \eq{maineq2} and to \eq{miosystxy} occurs with wide oscillations after a long time of apparent calm.
Hence, the solution does not display any visible behavior which may warn some imminent danger. The reason is that second derivatives of the solution to
\eq{maineq2} blow up at a higher rate, see Theorem \ref{asymptotics}, and second derivatives are not visible by simply looking at the graph.
Wide oscillations after a long time of apparent calm suggest that some hidden energy is present in the system.
Finally, we have seen that self-excited oscillating blow up also appears for a wide class of superlinear fourth order differential equations, including PDE's.

\section{Affording an explanation in terms of energies}\label{afford}

\subsection{Energies involved}\label{energies}

The most important tools to describe any structure are the energies which appear. A precise description of all the energies involved would lead to
perfect models and would give all the information to make correct projects. Unfortunately, bridges, as well as many other structures, do not allow simple
characterisations of all the energies present in the structure and, maybe, not all possible existing energies have been detected up to nowadays.
Hence, it appears impossible to make a precise list of all the energies involved in the complex behavior of a bridge.\par
The kinetic energy is the simplest energy to be described. If $v(x,t)$ denotes the vertical displacement at $x\in\Omega$ and at $t>0$, then
the total kinetic energy at time $t$ is given by
$$\frac{m}{2}\int_\Omega v_t(x,t)^2\, dx$$
where $m$ is the mass and $\Omega$ can be either a segment (beam model) or a thin rectangle (plate model). This energy gives rise to
the term $mv_{tt}$ in the corresponding Euler-Lagrange equation, see Section \ref{models}.\par
Then one should consider potential energy, which is more complicated. From \cite[pp.75-76]{bleich}, we quote
\begin{center}
%\fbox%{%
\begin{minipage}{162mm}
{\em The potential energy is stored partly in the stiffening frame in the form of elastic energy due to bending and partly in the cable in the form
of elastic stress-strain energy and in the form of an increased gravity potential.}
\end{minipage}
%}
\end{center}
Hence, an important role is played by stored energy. Part of the stored energy is potential energy which is quite simple to determine: in order to avoid
confusion, in the sequel we call potential energy only the energy due to gravity which, in the case of a bridge, is computed in terms of the vertical
displacement $v$. However, the dominating part of the stored energy in a bridge is its elastic energy.\par
The distinction between elastic and potential stored energies, which in our opinion appears essential,
is not highlighted with enough care in \cite{bleich} nor in any subsequent treatise of suspension bridges.
A further criticism about \cite{bleich} is that it often makes use of ${\cal LHL}$, see \cite[p.214]{bleich}. Apart these two weak points,
\cite{bleich} makes a quite careful quantitative analysis of the energies involved. In particular, concerning the elastic energy, the contribution
of each component of the bridge is taken into account in \cite{bleich}: the chords (p.145), the diagonals (p.146), the cables (p.147), the towers
(pp.164-168), as well as quantitative design factors (pp.98-103).\par
A detailed energy method is also introduced at p.74, as a practical tool
to determine the modes of vibrations and natural frequencies of suspension bridges: the energies considered are expressed in terms of the amplitude
of the oscillation $\eta=\eta(x)$ and therefore, they do not depend on time.
As already mentioned, the nonlocal term in \eq{eqqq} represents the increment of energy due to the external wind during a period of time.
Recalling that $v(x,t)=\eta(x)\sin(\omega t)$, \cite[p.28]{bleich} represents the net energy input per cycle by
\neweq{dissipation}
A:=\frac{w^2}{H_w^2}\, \frac{EA}{L}\int_0^L \eta(z)\, dz-C\int_0^L \eta(z)^2\, dz
\endeq
where $L$ is the length of the beam and $C>0$ is a constant depending on the frequency of oscillation and on the damping coefficient, so that the second
term is a quantum of energy being dissipated as heat: mechanical hysteresis, solid friction damping, aerodynamic damping, etc. It is explained in
Figure 13 in \cite[p.33]{bleich} that
\begin{center}
%\fbox%{%
\begin{minipage}{162mm}
{\em the kinetic energy will continue to build up and therefore the amplitude will continue to increase until $A=0$.}
\end{minipage}
%}
\end{center}
Hence, the larger is the input of energy $\int_0^L \eta$ due to the wind, the larger needs to be the displacement $v$ before the kinetic energy will stop
to build up. This is related to \cite[pp.241-242]{bleich}, where an attempt is made
\begin{center}
%\fbox%{%
\begin{minipage}{162mm}
{\em to approach by rational analysis the problem proper of self-excitation of vibrations in truss-stiffened suspension bridges. ...
The theory discloses the peculiar mechanism of catastrophic self-excitation in such bridges...}
\end{minipage}
%}
\end{center}

The word ``self-excitation'' suggests behaviors similar to \eq{pazzo}. As shown in \cite{gazpav3}, the oscillating blow up of solutions
described by \eq{pazzo} occurs in many fourth order differential equations, including PDE's, see also Remark \ref{pde}, whereas it does not occur in
lower order equations. But these oscillations, and the energy generating them, are somehow hidden also in fourth order equations; let us explain
qualitatively what we mean by this. Engineers usually say that {\em the wind feeds into the structure an increment of energy} (see \cite[p.28]{bleich})
and that {\em the bridge eats energy} but we think it is more appropriate to say that {\bf the bridge ruminates energy}. That is, first the
bridge stores the energy due to prolonged external sources. Part of this stored energy is indeed dissipated (eaten) by the structural damping
of the bridge. From \cite[p.211]{bleich}, we quote
\begin{center}
%\fbox%{%
\begin{minipage}{162mm}
{\em Damping is dissipation of energy imparted to a vibrating structure by an exciting force, whereby a portion of the external energy is transformed
into molecular energy.}
\end{minipage}
%}
\end{center}
Every bridge has its own damping capacity defined as the ratio between the energy dissipated in one cycle of oscillation and the maximum energy
of that cycle. The damping capacity of a bridge depends on several components such as elastic hysteresis of the structural material and friction
between different components of the structure, see \cite[p.212]{bleich}. A second part of the stored energy becomes potential energy if the bridge
is above its equilibrium position. The remaining part of the stored energy, namely the part exceeding the damping capacity plus the potential
energy, is stored into inner elastic energy; only when this stored elastic energy reaches a critical threshold (saturation),
the bridge starts ``ruminating'' energy and gives rise to torsional or more complicated oscillations.\par
When \eq{pazzo} occurs, the estimate \eq{estimate} shows that $|w''(t)|$ blows up at a higher rate when compared to $|w(t)|$ and $|w'(t)|$.
Although any student is able to see if a function or its first derivative are large just by looking at the graph, most people are unable to see if the
second derivative is large. Roughly speaking, the term $\int w''(t)^2$ measures the elastic energy, the term $\int w'(t)^2$ measures the kinetic
energy, whereas $\int w(t)^2$ is a measure of the potential energy due to gravity. Hence, \eq{estimate} states that the elastic energy has a higher rate
of blow up when compared to the kinetic and potential energies; equivalently, we can say that both the potential energy, described by $|w|$,
and the kinetic energy, described by $|w'|$, are negligible with respect to the elastic energy, described by $|w''|$.
But since large $|w''(t)|$ cannot be easily detected, the bridge may have large elastic energy, and hence large total energy, without revealing it.
Since $|w(t)|$ and $|w'(t)|$ blow up later than $|w''(t)|$, the total energy can be very large without being visible; this is what we mean by
hidden elastic energy. This interpretation well agrees with the numerical results described in Section \ref{blup} which show that blow up
in finite time for \eq{maineq2} occurs after a long waiting time of apparent calm and sudden wide oscillations. Since the apparent calm suggests low
energy whereas wide oscillations suggest high energy, this means that some hidden energy is indeed present in the bridge. And the stored elastic
energy, in all of its forms, seems the right candidate to be the hidden energy. Summarising, the large elastic energy $|w''(t)|$ hides the blow
up of $|w(t)|$ for some time. Then, with some delay but suddenly, also $|w(t)|$ becomes large. If one could find a simple way to measure $|w''(t)|$
which is an approximation of the elastic energy or, even better, $|w''(t)|/\sqrt{1+w'(t)^2}$ which is the mean curvature, then one would have some time to
prevent possible collapses.\par
A flavor of what we call hidden energy was already present in \cite{bleich} where the energy storage capacity of a bridge is often discussed, see
(p.34, p.104, p.160, p.164) for the storage capacity of the different vibrating components of the bridge.
Moreover, the displayed comment just before \eq{coupled} shows that McKenna-Tuama \cite{mckO} also had the feeling that some energy could be hidden.

\subsection{Energy balance}\label{energybalance}

As far as we are aware, the first attempt for a precise quantitative energy balance in a beam representing a suspension bridge was made
in \cite[Chapter VII]{tac2}. Although all the computations are performed with precise values of the constants, in our opinion the analysis there
is not complete since it does not distinguish between different kinds of potential energies; what is called potential energy is just the energy
stored in bending a differential length of the beam.\par
A better attempt is made in \cite[p.107]{bleich} where the plot displays the behavior of the stored energies: the potential energy due to gravity
and the elastic energies of the cables and of the stiffening frame. Moreover, the important notion of flutter speed is first used.
Rocard \cite[p.185]{rocard} attributes to Bleich \cite{bleichsolo}
\begin{center}
%\fbox%{%
\begin{minipage}{162mm}
{\em ... to have pointed out the connection with the flutter speed of aircraft wings ... He distinguishes clearly between flutter and the effect of the staggered
vortices and expresses the opinion that two degrees of freedom (bending and torsion) at least are necessary for oscillations of this kind.}
\end{minipage}
%}
\end{center}
A further comment on \cite{bleichsolo} is given at \cite[p.80]{wake}:
\begin{center}
%\fbox%{%
\begin{minipage}{162mm}
{\em ... Bleich's work ... ultimately opened up a whole new field of study. Wind tunnel tests on thin plates suggested that higher wind velocities increased
the frequency of vertical oscillation while decreasing that of torsional oscillation.}
\end{minipage}
%}
\end{center}
The conclusion is that when the two frequencies corresponded, a flutter critical velocity was reached, as manifested in a potentially catastrophic coupled
oscillation. In order to define the flutter speed, \cite[pp.246-247]{bleich} assumes that the bridge is subject to a natural steady state oscillating motion;
the flutter speed is then defined by:
\begin{center}
%\fbox%{%
\begin{minipage}{162mm}
{\em With increasing wind speed the external force necessary to maintain the motion at first increases and then decreases until a point is reached where
the air forces alone sustain a constant amplitude of the oscillation. The corresponding velocity is called the critical velocity or flutter speed.}
\end{minipage}
%}
\end{center}
The importance of the flutter speed is then described by
\begin{center}
%\fbox%{%
\begin{minipage}{162mm}
{\em Below the critical velocity $V_c$ an exciting force is necessary to maintain a steady-state motion; above the critical velocity the direction of the
force must be reversed (damping force) to maintain the steady-state motion. In absence of such a damping force the slightest increase of the velocity
above $V_c$ causes augmentation of the amplitude.}
\end{minipage}
%}
\end{center}
This means that self-excited oscillations appear as soon as the flutter speed is exceeded.\par
Also Rocard devotes a large part of \cite[Chapter VI]{rocard} to
\begin{center}
%\fbox%{%
\begin{minipage}{162mm}
{\em ... predict and delimit the range of wind speeds that inevitably produce and maintain vibrations of restricted amplitudes.}
\end{minipage}
%}
\end{center}
This task is reached by a careful study of the natural frequencies of the structure. Moreover, Rocard aims to
\begin{center}
%\fbox%{%
\begin{minipage}{162mm}
{\em ... calculate the really critical speed of wind beyond which oscillatory instability is bound to arise and will always cause fracture.}
\end{minipage}
%}
\end{center}
The flutter speed $V_c$ for a bridge without damping is computed on \cite[p.163]{rocard} and reads
\neweq{speedflutter}
V_c^2=\frac{2r^2\ell^2}{2r^2+\ell^2}\, \frac{\omega_T^2-\omega_B^2}{\alpha}
\endeq
where $2\ell$ denotes the width of the roadway, see Figure \ref{9}, $r$ is the radius of gyration, $\omega_B$ and $\omega_T$ are the lowest modes
circular frequencies of the bridge in bending and torsion respectively, $\alpha$ is the mass of air in a unit cube divided by the mass of steel and concrete
assembled within the same unit length of the bridge; usually, $r\approx\ell/\sqrt{2}$ and $\alpha\approx0.02$. More complicated formulas for the flutter speed
are obtained in presence of damping factors. Moreover, Rocard \cite[p.158]{rocard} shows that, for the original Tacoma Bridge, \eq{speedflutter} yields $V_c=47$mph
while the bridge collapsed under the action of a wind whose speed was $V=42$mph; he concludes that his computations are quite reliable.\par
In pedestrian bridges, the counterpart of the flutter speed is the {\em critical number of pedestrians}, see the quoted sentence by
Macdonald \cite{macdonald} in Section \ref{story} and also \cite[Section 2.4]{franck}. For this reason, in the sequel we prefer to deal with energies rather
than with velocities: the flutter speed $V_c$ corresponds to a {\bf critical energy threshold} $\overline{E}$ above which the bridge displays self-excited
oscillations. We believe that
\begin{center}
%\fbox%{%
\begin{minipage}{162mm}
{\bf The critical energy threshold, generated by the flutter speed for suspension bridges and by the critical number of pedestrians for footbridges,
is the threshold where the nonlinear behavior of the bridge really appears, due to sufficiently large displacements of the roadway from equilibrium.}
\end{minipage}
%}
\end{center}
The threshold $\overline{E}$ depends on the elasticity of the bridge, namely on the materials used for its construction. This is in accordance with the numerical
results obtained in \cite{gazpav3} where it is shown that the blow up time for solutions to \eq{maineq2} depends increasingly on the parameter $k$.
We refer to Section \ref{howto2} for a more precise definition of $\overline{E}$ and a possible way to determine it.\par
In this section, we attempt a qualitative energy balance involving more kinds of energies. A special role is played by
the elastic energy which should be distinguished form the potential energy that, as we repeatedly said, merely denotes the potential energy due to gravity;
its level zero is taken in correspondence of the equilibrium position of the roadway.\par
Let us first describe what we believe to happen in a single cross section $\Gamma$ of the bridge; let $\E$ denote its total energy.
Let $A$, $B$, and $C$ denote the three positions of the endpoint $P$ of $\Gamma$, as described in Figure \ref{1}
\begin{figure}[ht]
\begin{center}
{\includegraphics[height=25mm, width=120mm]{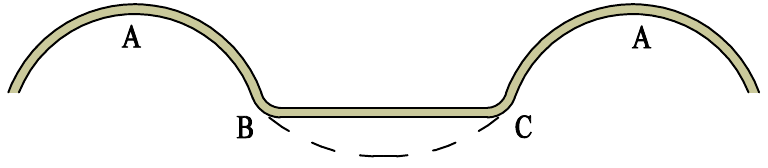}}
\caption{Different positions for the bridge side.}\label{1}
\end{center}
\end{figure}
where the thick grey part denotes the roadway whereas the dotted line displays the behavior of the solution $w$ to \eq{maineq2} when $w<0$,
namely when the deflection from horizontal appears, see \eq{w0}. In what follows, we denote by $A$, $B$, $C$, both the positions in Figure
\ref{1} and the instants of time when they occur for $P$. When $P$ is in its highest position $A$,
$\Gamma$ has maximal potential energy $E_p$ and zero kinetic energy $E_k$: $E_p(A)=\E$, $E_k(A)=0$. In the interval of time $t$ when $P$ goes
from position $A$ to position $B$ the potential and kinetic energies of $\Gamma$ exhibit the well-known behavior with constant sum, see the first picture in Figure \ref{2345}
\begin{figure}[ht]
\begin{center}
{\includegraphics[height=45mm, width=162mm]{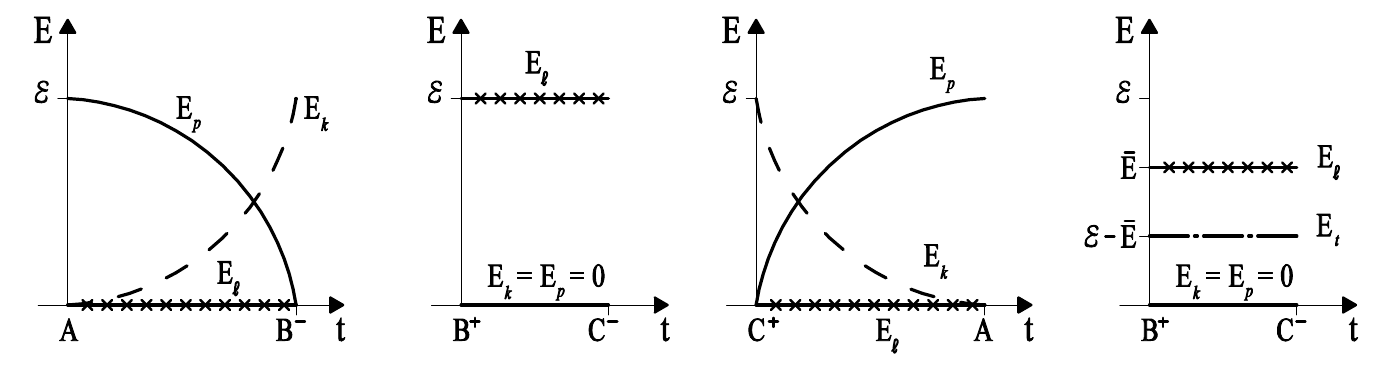}}
\caption{Energy balance for different positions of the bridge.}\label{2345}
\end{center}
\end{figure}
where $E_\ell$ denotes the portion of the stored elastic energy exceeding the structural damping of the bridge: when it is $0$, it means that
there is no stored elastic energy. When $P$ reaches position $B$, corresponding to the maximal elongation of the sustaining hangers, all the
energy has been transformed into kinetic energy: $E_p(B^-)=0$, $E_k(B^-)=\E$.
In this position the falling of $\Gamma$ is violently stopped by the extended hangers by means of a Dirac delta impulse and the existing kinetic
energy of $\Gamma$ is instantaneously stored into elastic energy $E_\ell$: $E_k(B^-)=E_\ell(B^+)$, see the second picture in Figure \ref{2345}.
If the total energy $\E$ is smaller than the critical threshold $\overline{E}$, corresponding to the flutter speed, nothing seems to happen
because the elastic energy is not visible; in this case, after a while it reaches position $C$ and thanks to a further impulse, the elastic energy transforms
back to kinetic energy. Then $P$ starts raising up towards position $A$ and
the sum of the potential and kinetic energies is again constant, see the third picture in Figure \ref{2345}.
In the meanwhile, if the wind keeps blowing or traffic loads generate further negative damping, the total energy $\E$ increases. Hence, after a cycle,
when $P$ is back in position $A$ the total energy $\E$ of $\Gamma$ may have become larger. In turn, $E_k(B^-)$ will also be larger and, after
a certain number of cycles, if the wind velocity is larger than the flutter speed, the total energy exceeds the critical threshold: $\E>\overline{E}$.
In turn, also $E_k(B^-)$ exceeds the critical threshold: $E_k(B^-)>\overline{E}$. When this occurs, the energy splits into two parts: the saturated
elastic energy $E_\ell(B^+)=\overline{E}$ and a torsional elastic energy $E_t(B^+)=\E-\overline{E}$ which immediately gives rise to torsional oscillations,
see the fourth picture in Figure \ref{2345}. As long as $\E>\overline{E}$, when $P$ reaches position $B$ the torsional elastic energy becomes positive
and $P$ ``virtually'' goes from $B$ to $C$ along the dotted line in Figure \ref{1}. In the interval of time when $P$
is between $B^+$ and $C^-$, the torsional elastic energy remains constant and equal to $E_t(C^+)$, see the fourth picture in Figure \ref{2345}.
Only after a further impact, in position $B$, it may vary due to the new impulse. Finally, there exists a second critical threshold: if $\E-\overline{E}$
becomes too large, namely if the total energy $\E$ itself is too large, then the bridge collapses.
\begin{remark}
{\rm With some numerical results at hand, Lazer-McKenna \cite[p.565]{mck1} attempt to explain the Tacoma collapse with the following comment:
\begin{center}
%\fbox%{%
\begin{minipage}{162mm}
{\em An impact, due to either an unusual strong gust of wind, or to a minor structural failure, provided sufficient energy to
send the bridge from one-dimensional to torsional orbits.}\end{minipage}
%}
\end{center}
We believe that what they call {\em an unusual impact} is, in fact, a cyclic impulse for the transition between positions $B^-$ and $B^+$.\endproof}
\end{remark}

\begin{problem} {\em The above energy balance should become quantitative. An exact way to compute all the energies involved should be determined.
Of course, the potential and kinetic energy are straightforward. But the elastic energy needs deeper analysis.}\endproof\end{problem}

Let us now consider the entire bridge which we model as a rectangular plate $\Omega=(0,L)\times(-\ell,\ell)\subset\R^2$. For all $x_1\in(0,L)$ let $\E_{x_1}$ denote
the total energy of the cross section $\Gamma_{x_1}=\{x_1\}\times(-\ell,\ell)$, computed following the above explained method. Then the total energy of the plate
$\Omega$ is given by
\neweq{allsections}
\E_\Omega=\int_0^L\E_{x_1}\, dx_1\ .
\endeq

For simplicity we have here neglected the stretching energy which is a kind of ``interaction energy between cross sections''. If one wishes to consider also this energy,
one usually assumes that the elastic force is proportional to the increase of surface. Then the stretching energy of the horizontal plate $\Omega$ whose vertical deflection
is $u$ reads
$$\E_S(u)=\int_\Omega\left(\sqrt{1+|\nabla u|^2}-1\right) \, dx_1dx_2$$
and, after multiplication by a suitable coefficient, it should be added to $\E_\Omega$. For small deformations $u$ the asymptotic expansion leads to the usual
Dirichlet integral $\frac12 \int_\Omega|\nabla u|^2$ and, in turn, to the appearance of the second order term $\Delta u$ in the corresponding Euler-Lagrange equation.
Clearly, the bridge is better described with the addition of the stretching energy but, at least to have a description of qualitative behaviors, we may neglect it
in a first simplified model.\par\medskip
The just described elastic phenomenon may also be seen in a much simpler model. Imagine there is a ball at some given height above
an horizontal plane $P$, see position $A$ in Figure \ref{10}.
\begin{figure}[ht]
\begin{center}
{\includegraphics[height=45mm, width=153mm]{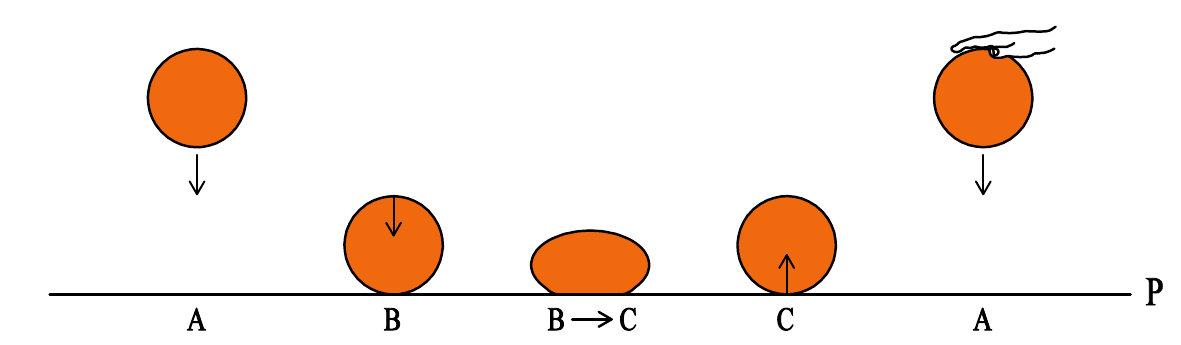}}
\caption{A suspension bridge is like a bouncing ball.}\label{10}
\end{center}
\end{figure}
The ball is falling down until it reaches the position tangent to the plane as in position $B$.
Then there is some positive time where the ball touches $P$; the reason is that it is squeezed and deformed, although probably less than illustrated
in the $B\to C$ picture. But, of course, a very soft ball may have an important deformation. Just after the impact, the ball stores elastic energy which
is hardly visible. After some time, the ball recovers its initial spherical
shape and is ready to bounce up, see position $C$. When it is back in position $A$ it may store further energy, for instance with a hand pushing it
downwards. For these reasons, we believe that there is some resemblance between bouncing balls and oscillating bridges.

\subsection{Oscillating modes in suspension bridges: seeking the correct boundary conditions}\label{modes}

Smith-Vincent \cite[Section I.2]{tac2} analyse the different forms of motion of a suspension bridge and write
\begin{center}
%\fbox%{%
\begin{minipage}{162mm}
{\em The natural modes of vibration of a suspension bridge can be classified as vertical and torsional. In pure vertical modes all points on
a given cross section move vertically the same amount and in phase... The amount of this vertical motion varies along the longitudinal axis of the bridge
as a modified sine curve.}
\end{minipage}
%}
\end{center}
Then, concerning torsional motions, they write
\begin{center}
%\fbox%{%
\begin{minipage}{162mm}
{\em In pure torsional modes each cross section rotates about an axis which is parallel to the longitudinal axis of the bridge and is in the same
vertical plane as the centerline of the roadway. Corresponding points on opposite sides of the centerline of the roadway move equal distances but
in opposite directions.}
\end{minipage}
%}
\end{center}
Moreover, Smith-Vincent also analyse small oscillations:
\begin{center}
%\fbox%{%
\begin{minipage}{162mm}
{\em For small torsional amplitudes the movement of any point is essentially vertical, and the wave form or variation of amplitude
along a line parallel to the longitudinal centerline of the bridge ... is the same as for a corresponding pure vertical mode.}
\end{minipage}
%}
\end{center}

With these remarks at hand, in this section we try to set up a reliable eigenvalue problem. We consider the roadway bridge as a
long narrow rectangular thin plate, simply supported on its short sides. So, let $\Omega=(0,L)\times(-\ell,\ell)\subset\R^2$ where $L$ is
the length of the bridge and $2\ell$ is its width; a realistic assumption is that $2\ell\ll L$.\par
As already mentioned in Section \ref{elasticity}, the choice of the boundary conditions is delicate since it depends on the physical model considered. We first
recall that the boundary conditions $u=\Delta u=0$ are the so-called Navier boundary conditions, see Figure \ref{navierbc} which is taken from \cite[p.96]{navier}.
\begin{figure}[ht]
\begin{center}
{\includegraphics[height=25mm, width=120mm]{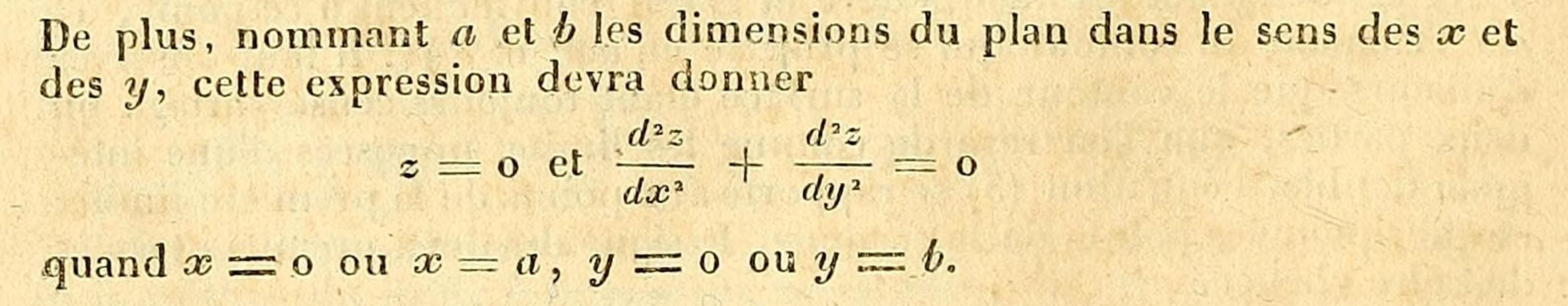}}
\caption{First appearance of Navier boundary conditions.}\label{navierbc}
\end{center}
\end{figure}
On flat parts of the boundary where no curvature is present, they describe simply supported plates, see e.g.\ \cite{gazgruswe}. When $x_1$ is
fixed, either $x_1=0$ or $x_1=L$, these conditions reduce to $u=u_{x_1x_1}=0$. And precisely on these two sides, the roadway $\Omega$ is assumed to be simply
supported; this is uniformly accepted in any of the models we met. The delicate point is the determination of the boundary conditions on the other sides.\par
In order to get into the problem, we start by dealing with the linear Kirchhoff-Love theory described in Section \ref{elasticity}.
In view of \eq{energy-gs}, the energy of the vertical deformation $u$ of a rectangular plate $\Omega=(0,L)\times(-\ell,\ell)$ subject to a load $f=f(x_1,x_2)$ is given by
\neweq{energy-f}
\mathbb{E}(u)=\int_{\Omega }\left(\frac{1}{2}\left( \Delta u\right) ^{2}+(\sigma-1)\det(D^2u)-f\, u\right) \, dx_1dx_2
\endeq
and the corresponding Euler-Lagrange equation reads $\Delta^2u=f$ in $\Omega$. For a fully simply supported plate, that is $u=u_{x_1x_1}=0$ on the vertical sides and
$u=u_{x_2x_2}=0$ on the horizontal sides, this problem has been solved by Navier \cite{navier} in 1823, see also \cite[Section 2.1]{mansfield}. But Cauchy
\cite{cauchy} criticised the work by Navier by claiming that
\begin{center}
%\fbox%{%
\begin{minipage}{162mm}
{\em ... Navier ... avait consid\'er\'e deux esp\`eces de forces produites, les unes par la dilatation ou la contraction, les autres par la flexion de
ce m\^eme plan. ... Il me parut que ces deux esp\`eces de forces pouvaient \^etre r\'eduites \`a une seule... .}
\end{minipage}
%}
\end{center}
Did Cauchy already have in mind the difference/analogy between bending, stretching, and torsion? In any case, since the bridge is not a fully simply
supported plate, different boundary
conditions should be considered on the horizontal sides. The load problem on the rectangle $\Omega$ with only the vertical sides being simply supported was considered by
L\'evy \cite{levy}, Zanaboni \cite{zanaboni}, and Nadai \cite{nadai}, see also \cite[Section 2.2]{mansfield} for the analysis of different kinds of boundary conditions on the
remaining two sides $x_2=\pm\ell$. Let us also mention the more recent monograph \cite[Chapter 3]{ventsel} for a very clear description of bending of rectangular plates.\par
A first natural possibility is to consider the horizontal sides to be free. If no physical constraint is present on the horizontal sides, then the
boundary conditions there become (see e.g.\ \cite[(2.40)]{ventsel})
$$
u_{x_2x_2}(x_1,\pm\ell)+\sigma u_{x_1x_1}(x_1,\pm\ell)=0\, ,\quad u_{x_2x_2x_2}(x_1,\pm\ell)+(2-\sigma)u_{x_1x_1x_2}(x_1,\pm\ell)=0\, ,\quad x_1\in(0,L)\, .
$$
Unfortunately, some physical constraints are present on the two horizontal sides, both because of the action of the hangers and because the cross section of the
bridge may be considered to be rigid. Our purpose is to describe the oscillating modes of the plate $\Omega$ under the most possible realistic boundary conditions; we
suggest here some new conditions to be required on the horizontal sides $x_2=\pm\ell$. Hopefully, they should allow to emphasise both vertical and torsional oscillations.\par
First of all, note that if the cross section of the roadway is rigid and behaves as in Figure \ref{9} then each cross section has constant
deflection from horizontal, that is, it rotates around its barycenter $B$: Prof.\ Farquharson, the man escaping in \cite{tacoma}, ran following
the middle line of the roadway precisely in order to avoid torsional oscillations.
Denoting by $u=u(x_1,x_2)$ the vertical displacement of the roadway, this amounts to say that
\neweq{small}
u_{x_2}(x_1,x_2,t)=\Psi(x_1,t)\qquad(x\in\Omega\, ,\ t>0)
\endeq
for some $\Psi$. If we translate this constraint on the vertical sides $x_2=\pm\ell$ of the plate $\Omega$, we obtain
$$\begin{array}{cc}
u_{x_2x_2}(x_1,x_2)=0\quad x\in(0,L)\times\{-\ell,\ell\}\, ,\\
2\ell u_{x_2}(x_1,-\ell)=2\ell u_{x_2}(x_1,\ell)=u(x_1,\ell)-u(x_1,-\ell)\quad x_1\in(0,L)\, .
\end{array}$$
Indeed, by \eq{small} we have $u_{x_2x_2}\equiv0$ in $\Omega$, which justifies both these conditions.
Taking all the above boundary conditions into account, we are led to consider the following eigenvalue problem
\neweq{eigen1}
\left\{\begin{array}{ll}
\Delta^2 u=\lambda u\quad & x=(x_1,x_2)\in\Omega\, ,\\
u(x_1,x_2)=u_{x_1x_1}(x_1,x_2)=0\quad & x\in\{0,L\}\times(-\ell,\ell)\, ,\\
u_{x_2x_2}(x_1,x_2)=0\quad & x\in(0,L)\times\{-\ell,\ell\}\, ,\\
2\ell u_{x_2}(x_1,-\ell)=2\ell u_{x_2}(x_1,\ell)=u(x_1,\ell)-u(x_1,-\ell)\quad & x_1\in(0,L)\, .
\end{array}\right.
\endeq
This is a nonlocal problem which combines boundary conditions on different parts of $\partial\Omega$.
The oscillating modes of $\Omega$ are the eigenfunctions to \eq{eigen1}. By separating variables, we find the two families of eigenfunctions
\neweq{autofunzione}
\sin\left(\frac{m\pi}{L}x_1\right)\ ,\qquad x_2\sin\left(\frac{m\pi}{L}x_1\right)\qquad(m\in\N\setminus\{0\})\ .
\endeq
The first family describes pure vertical oscillations whereas the second family describes pure torsional oscillations with no vertical displacement
of the middle line of the roadway. For fixed $m$, both these eigenfunctions correspond to the eigenvalue
$$\lambda_m=\frac{m^4\pi^4}{L^4}\ .$$
Although the ``interesting'' eigenfunctions to \eq{eigen1} are the ones in \eq{autofunzione}, further eigenfunctions might exist and are expected to have the form
\neweq{formeigen}
\psi_m(x_2)\, \sin\left(\frac{m\pi}{L}x_1\right)\qquad(m\in\N\setminus\{0\})
\endeq
for some $\psi_m\in C^4(-\ell,ell)$ satisfying a suitable linear fourth order ODE, see \cite{fergaz}.

\begin{problem} {\em Determine all the eigenvalues and eigenfunctions to \eq{eigen1}. Is $\lambda=0$ an eigenvalue? Which subspace of $H^2(\Omega)$
is spanned by these eigenfunctions?}\endproof\end{problem}

It would be interesting to find out if the corresponding loaded plate problem has an equilibrium.

\begin{problem} {\em For any $f\in L^2(\Omega)$ study existence and uniqueness of a function $u\in H^4(\Omega)$ satisfying $\Delta^2 u=f$
in $\Omega$ and \eq{eigen1}$_2$-\eq{eigen1}$_3$-\eq{eigen1}$_4$. Try first some particular forms of $f$ as in \cite[Sections 2.2, 2.2.2]{mansfield}
and then general $f=f(x_1,x_2)$. Is there any reasonable weak formulation for this problem?}\endproof\end{problem}

Problem \eq{eigen1} may turn out to be quite complicated from a mathematical point of view: it is not a variational problem and standard elliptic regularity does not apply.
So, let us suggest an alternative model which seems to fit slightly better in well-known frameworks and also admits \eq{autofunzione} as eigenfunctions.
Consider the eigenvalue problem
\neweq{eigen2}
\left\{\begin{array}{ll}
\Delta^2 u=\lambda u\quad & x=(x_1,x_2)\in\Omega\, ,\\
u(x_1,x_2)=u_{x_1x_1}(x_1,x_2)=0\quad & x\in\{0,L\}\times(-\ell,\ell)\, ,\\
u_{x_2x_2}(x_1,x_2)=u_{x_2x_2x_2}(x_1,x_2)=0\quad & x\in(0,L)\times\{-\ell,\ell\}\, .
\end{array}\right.
\endeq
Here, the condition on the third normal derivative replaces \eq{eigen1}$_4$. This condition somehow ``forces $u_{x_2x_2}$ to remain zero" which is
precisely what happens in the bridge. It is straightforward to verify that \eq{autofunzione} are eigenfunctions to \eq{eigen2} so that similar
problems arise.

\begin{problem} {\em Determine all the eigenvalues and eigenfunctions to \eq{eigen2}. Which subspace of $H^2(\Omega)$ is spanned by these
eigenfunctions? For any $f\in L^2(\Omega)$ study existence and uniqueness of a function $u\in H^4(\Omega)$ satisfying $\Delta^2 u=f$
in $\Omega$ and \eq{eigen2}$_2$-\eq{eigen2}$_3$.}\endproof\end{problem}

A further alternative eigenvalue problem is also of some interest. A possible additional simplification in the model would be to assume that
\neweq{mezzeria}
u(x_1,0,t)\simeq0\qquad\mbox{for all }x_1\in(0,L)\, ,\ t>0\, ,
\endeq
namely that the center line of the roadway has small vertical oscillations. If on one hand this seems realistic in view of \cite{tacoma},
on the other hand this would preclude the appearance of wide vertical oscillations on the center line. In the whole, we believe that, qualitatively,
the behavior of the plate will not change too much. By assuming \eq{small} and that equality holds in \eq{mezzeria}, for all $x_1\in(0,L)$
and $t>0$ we obtain
$$u_{x_2}(x_1,-\ell,t)=u_{x_2}(x_1,\ell,t)=\frac{u(x_1,\ell,t)-u(x_1,-\ell,t)}{2\ell}\ ,\quad u(x_1,\ell,t)=-u(x_1,-\ell,t)\, .$$
By putting these together and decoupling equations on $x_2=\pm\ell$ we arrive at the Robin conditions
$$\ell u_{x_2}(x_1,-\ell,t)+u(x_1,-\ell,t)=0\, ,\ \ell u_{x_2}(x_1,\ell,t)-u(x_1,\ell,t)=0\quad(x_1\in(0,L)\, ,\ t>0)\, .$$
We believe that these boundary conditions may help to obtain oscillation properties also of the solutions to equations derived without assuming \eq{mezzeria}.

\begin{problem} {\em Determine the eigenvalues $\lambda$ and the properties of the eigenfunctions to the following problem
$$\left\{\begin{array}{ll}
\Delta^2 u=\lambda u\quad & x\in\Omega\, ,\\
u(x_1,x_2)=u_{x_1x_1}(x_1,x_2)=0\quad & (x_1,x_2)\in\{0,L\}\times(-\ell,\ell)\, ,\\
u_{x_2x_2}(x_1,x_2)=0\quad & (x_1,x_2)\in(0,L)\times\{-\ell,\ell\}\, ,\\
\ell u_{x_2}(x_1,-\ell)+u(x_1,-\ell)=0\quad & x_1\in(0,L)\, ,\\
\ell u_{x_2}(x_1,\ell)-u(x_1,\ell)=0\quad & x_1\in(0,L)\, .\\
\end{array}\right.$$
This would give an idea of what kind of oscillations should be expected in the below model equation \eq{truebeam}.}\endproof\end{problem}

In this section we set up several eigenvalue problems for thin rectangular plates simply supported on two opposite sides. There is no evidence on which could be
the best boundary conditions on the remaining sides. Once these are determined, it could be of great interest to have both theoretical and numerical information
on the the behavior of eigenvalues and eigenfunctions. The conditions should be sought in order to perfectly describe the oscillating modes of suspension bridges.
In a work in preparation \cite{fergaz} we tackle these problems.

\subsection{Seeking the critical energy threshold}\label{howto2}

The title of this section should not deceive. We will not give a precise method how to determine the energy threshold which gives rise to torsional oscillations
in a plate. We do have an idea how to proceed but several steps are necessary before reaching the final goal.\par
Consider the plate $\Omega=(0,L)\times(-\ell,\ell)$ and the incomplete eigenvalue problem
\neweq{incomplete}
\left\{\begin{array}{ll}
\Delta^2 u=\lambda u\quad & x\in\Omega\, ,\\
u(x_1,x_2)=u_{x_1x_1}(x_1,x_2)=0\quad & (x_1,x_2)\in\{0,L\}\times(-\ell,\ell)\, .
\end{array}\right.
\endeq
Problem incomplete lacks conditions on the remaining sided $x_2=\pm\ell$ and, as mentioned in the previous section, it is not clear which boundary conditions
should be added there.\par
In order to explain which could be the method to determine the critical energy threshold, consider the simple case where the plate is square,
$\Omega=(0,\pi)\times(-\frac{\pi}{2},\frac{\pi}{2})$, and let us complete \eq{incomplete} with the ``simplest'' boundary conditions, namely the Navier boundary
conditions which represent a fully simply supported plate. As already mentioned these are certainly not the correct boundary conditions for a bridge but they are quite helpful
to describe the method we are going to suggest. For different and more realistic boundary conditions, we refer to the paper in preparation \cite{fergaz}.
So, consider the problem
\neweq{irrational}
\left\{\begin{array}{ll}
\Delta^2 u=\lambda u\quad & x\in\Omega\, ,\\
u(x_1,x_2)=u_{x_1x_1}(x_1,x_2)=0\quad & (x_1,x_2)\in\{0,\pi\}\times(-\frac{\pi}{2},\frac{\pi}{2})\, ,\\
u(x_1,x_2)=u_{x_2x_2}(x_1,x_2)=0\quad & (x_1,x_2)\in(0,\pi)\times\{-\frac{\pi}{2},\frac{\pi}{2}\}\ .
\end{array}\right.
\endeq
It is readily seen that, for instance, $\lambda=625$ is an eigenvalue for \eq{irrational} and that there are 4 linearly independent corresponding eigenfunctions
\neweq{exxample}
\{\sin(24x_1)\cos(7x_2),\, \sin(20x_1)\cos(15x_2),\, \sin(15x_1)\cos(20x_2),\, \sin(7x_1)\cos(24x_2)\}\ .
\endeq
It is well-known that similar facts hold for the second order eigenvalue problem $-\Delta u=\lambda u$ on the square, so what we are discussing is not surprising.
What we want to emphasise here is that, associated to the same eigenvalue $\lambda=625$, we have 4 different kinds of vibrations in the $x_1$-direction and
each one of these vibrations has its own counterpart in the $x_2$-direction corresponding to torsional oscillations. We believe that this will be true
for any boundary conditions on $x_2=\pm\ell$ completing \eq{incomplete} and for any values of $L$ and $\ell$. We refer again to Figure \ref{patterns} for
the patterns of some vibrating plates.\par
Consider now a general plate $\Omega=(0,L)\times(-\ell,\ell)$ and let $f\in L^2(\Omega)$; in view of \cite[Section 2.2]{mansfield}, we expect the solution to the problem
\neweq{withsource}
\left\{\begin{array}{ll}
\Delta^2 u=f\quad & x\in\Omega\, ,\\
u(x_1,x_2)=u_{x_1x_1}(x_1,x_2)=0\quad & (x_1,x_2)\in\{0,\pi\}\times(-\frac{\pi}{2},\frac{\pi}{2})\, ,\\
\mbox{other boundary conditions}\quad & (x_1,x_2)\in(0,\pi)\times\{-\frac{\pi}{2},\frac{\pi}{2}\}\ ,
\end{array}\right.
\endeq
to be of the kind
$$u(x_1,x_2)=\sum_{m=1}^\infty \psi_m(x_2)\sin\left(\frac{m\pi}{L}x_1\right)\qquad(x_1,x_2)\in\Omega$$
for some functions $\psi_m$ depending on the Fourier coefficients of $f$. Since we have in mind small $\ell$, we can formally expand $\psi_m$ in Taylor polynomials and obtain
$$\psi_m(x_2)=\psi_m(0)+\psi_m'(0)x_2+o(x_2)\qquad\mbox{as }x_2\to0\ .$$
Hence, $u$ may approximately be written as a combination of the functions in \eq{autofunzione}:
$$u(x_1,x_2)\approx\sum_{m=1}^\infty[a_m+b_mx_2]\sin\left(\frac{m\pi}{L}x_1\right)\qquad(x_1,x_2)\in\Omega$$
where $a_m=\psi_m(0)$ and $b_m=\psi'_m(0)$. If instead of a stationary problem such as \eq{withsource}, $u=u(x_1,x_2,t)$ satisfies an evolution problem with the
same boundary conditions, then also its coefficients depend on time:
\neweq{fourier}
u(x_1,x_2,t)\approx\sum_{m=1}^\infty \Big(a_m(t)+b_m(t)x_2\Big)\sin\left(\frac{m\pi}{L}x_1\right)\qquad(x_1,x_2)\in\Omega\, ,\ t>0\ .
\endeq

Let now $\E(t)$ denote the instantaneous total energy of the bridge, as determined in \eq{allsections}. What follows
is not precise, it is a qualitative attempt to describe combined vertical and torsional oscillations. In particular, due to the restoring cables and hangers,
the sine functions in \eq{fourier} should be modified in order to display different behaviors for positive and negative arguments. Moreover, we call ``small'' any quantity
which is less than unity and ``almost zero'' (in symbols $\cong0$) any quantity which has a smaller order of magnitude when compared with small quantities.
Finally, in order to avoid delicate sign arguments, we will often refer to $a_m^2$ and $b_m^2$ instead of $a_m$ and $b_m$.\par\smallskip
$\bullet$ {\bf Small energy.} As long as $\E(t)$ is small one may not even see oscillations, but if somebody stands on the bridge he might be
able to feel oscillations. For instance, standing on the sidewalk of a bridge, one can feel the oscillations created by a car going through the
roadway but the oscillations will not be visible to somebody watching the roadway from a point outside the bridge. For small energies
$\E(t)$ only small oscillations appear and the corresponding solution \eq{fourier} has small coefficients $a_m(t)$ while $b_m(t)\cong0$. More precisely,
\neweq{ambm}
\left\{\begin{array}{ll}
\forall\eps>0\ \exists\delta>0\quad\mbox{such that}\quad\E(t)<\delta\ \Longrightarrow\ a_m(t)^2<\eps\ \forall m\, ,\\
\exists\gamma>0\quad\mbox{such that}\quad\E(t)<\gamma\ \Longrightarrow\ b_m(t)\cong0\ \forall m\, .
\end{array}\right.
\endeq
The reason of the second of \eq{ambm} is that even small variations of the $b_m$'s correspond to a large variation of the total energy $\E$
because the huge masses of the cross sections would rotate along the large length $L$ of the roadway. On the other hand, the first
of \eq{ambm} may be strengthened by assuming that also some of the $a_m$'s are almost zero for small $\E$; in particular, we expect that this
happens for large $m$ since these coefficients correspond to higher eigenvalues
\neweq{vanishlargea}
\forall\overline{m}\in\N\setminus\{0\}\quad \exists\E_{\overline{m}}>0
\quad\mbox{such that}\quad\E(t)<\E_{\overline{m}}\ \Longrightarrow\ a_m(t)\cong0\quad \forall m>\overline{m}\ .
\endeq
To better understand this point, let us compute the elongation $\Gamma_m$ due to the $m$-th mode:
\neweq{elongation}
\Gamma_m(t):=\int_0^L\bigg(\sqrt{1+\frac{m^2\pi^2}{L^2}\, a_m(t)^2\, \cos^2\left(\frac{m\pi}{L}x_1\right)}\, -\, 1\bigg)\, dx_1\ ;
\endeq
this describes the stretching elastic energy since it is the difference between the length of the roadway deformed by one single mode
and the length of the roadway at rest. Due to the coefficient $\frac{m^2\pi^2}{L^2}$, it is clear that if $a_m^2\equiv a_{m+1}^2$ then
$\Gamma_m(t)<\Gamma_{m+1}(t)$. This is the reason why \eq{vanishlargea} holds.\par\smallskip
$\bullet$ {\bf Increasing energy.} According to \eq{vanishlargea}, as long as $\E(t)<\E_{\overline{m}}$ one has
$a_m(t)\cong0$ for all $m>\overline{m}$. If $\E(t)$ increases but remains smaller than $\E_{\overline{m}}$, then the coefficients $a_m(t)^2$
for $m=1,...,\overline{m}$ also increase. But they cannot increase to infinity since \eq{elongation} implies that the length of the
roadway would also increase to infinity. So, when the total energy $\E(t)$ reaches the threshold $\E_{\overline{m}}$ the superlinear elastic structure
of the bridge forces the solution \eq{fourier} to add one mode, so that $a_{\overline{m}+1}(t)\not\cong0$. Hence, the number of modes $\not\cong0$ is
a nondecreasing function of $\E$.\par\smallskip
$\bullet$ {\bf Critical energy threshold.} What is described above is purely theoretical, but the bridge has several physical constraints.
Of course, it cannot be stretched to infinity, it will break down much before. In particular, the number of active modes cannot increase
to infinity. The elastic properties of the bridge determine a critical (maximal) number of possible active modes, say $\mu$. If the energy
is distributed on the $\mu$ coefficients $a_1$,...,$a_\mu$, and if it increases up to $\E_\mu$, once again the superlinear elastic structure
of the bridge forces the solution \eq{fourier} to change mode, but this time from the $a_m$ to the $b_m$; due to \eq{elongation},
further stretching of the roadway would require much more energy than switching oscillations on torsional modes.
The switch is due to an impulse caused by the instantaneous stopping of the falling roadway imposed by the sustaining cables and the elongated hangers.
And which torsional modes will be activated depends on which coupled modes have the same eigenvalue; as an example, consider \eq{exxample} which, roughly speaking,
says that the motion may change from $24$ to $7$ oscillations in the $x_1$-direction with a consequent change of oscillation also in the $x_2$-direction.\par\smallskip
$\bullet$ {\bf Summarising...} Let $u$ in \eq{fourier} describe the vertical displacement of the roadway. The bridge has
several characteristic values which depend on its elastic structure.\par
$\diamondsuit$\quad An integer number $\mu\in\N$ such that $a_m(t)\cong0$ and $b_m(t)\cong0$ for all $m>\mu$, independently of the value of $\E(t)$.\par
$\diamondsuit$\quad $\mu$ different energy ``increasing modes thresholds'' $E_1,...,E_\mu$.\par
$\diamondsuit$\quad The critical energy threshold $\overline{E}=E_\mu$.\par
Assume that $\E(0)=0$, in which case $u(x_1,x_2,0)=0$, and that $t\mapsto\E(t)$ is increasing. As long as $\E(t)\le E_1$ we have $a_m\cong0$
for all $m\ge2$ and $b_m\cong0$ for all $m\ge1$; moreover, $t\mapsto a_1(t)^2$ is increasing. When $\E(t)$ reaches and exceeds $E_1$ there is a first
switch: the function $a_2^2$ starts being positive while, as long as $\E(t)\le E_2$, we still have
$a_m\cong0$ for all $m\ge3$ and $b_m\cong0$ for all $m\ge1$. And so on, until $\E(t)=E_\mu=\overline{E}$. At this point, also because of an impulse,
the energy forces the solution to have a nonzero coefficient $b_1$ rather than a nonzero coefficient $a_{\mu+1}$. The impulse forces $u$ to lower the
number of modes for which $a_m\not\cong0$. For instance, the observation by Farquharson ({\em The motion, which a moment before had involved nine or
ten waves, had shifted to two}) quoted in Section \ref{story} shows that, for the Tacoma Bridge, there was a change such as
\neweq{change}
\Big(a_m\cong0\ \forall m\ge11,\ b_m\cong0\ \forall m\ge1\Big)\ \longrightarrow\ \Big(a_m\cong0\ \forall m\ge3,\ b_m\cong0\ \forall m\ge2\Big)\ .
\endeq

In order to complete the material in this section, two major problems are still to be solved.\par
- Find the correct boundary conditions on $x_2=\pm\ell$.\par
- Find at which energy levels the ``transfer of energy between modes'' occurs, see \eq{change}.\par
Both these problems are addressed in a forthcoming paper \cite{fergaz}.

\section{Conclusions and future perspectives}\label{conclusions}

So far, we observed phenomena displayed by real structures, we discussed models, and we recalled some theoretical results.
In this section we take advantage from all this work, we summarise all the phenomena and remarks previously discussed, and we reach several conclusions.

\subsection{A new mathematical model for suspension bridges}\label{newmodel}

We suggest here a new mathematical model for the description of oscillations in suspension bridges. We expect the solution to
the corresponding equation to display both self-excited oscillations and instantaneous switch between vertical and torsional oscillations.
Moreover, the critical energy threshold (corresponding to the flutter speed) appears in the equation.\par
Let $\Omega=(0,L)\times(-\ell,\ell)\subset\R^2$ where $L$ represents the length of the bridge and $2\ell$ represents the width of the roadway.
Assume that $2\ell\ll L$ and consider the initial-boundary value problem
\neweq{truebeam}
\left\{\begin{array}{ll}
u_{tt}+\Delta^2 u+\delta u_t+f(u)=\varphi(x,t)\ & x=(x_1,x_2)\in\Omega,\ t>0,\\
u(x_1,x_2,t)=u_{x_1x_1}(x_1,x_2,t)=0\ & x\in\{0,L\}\times(-\ell,\ell),\ t>0,\\
u_{x_2x_2}(x_1,x_2,t)=0\ & x\in(0,L)\times\{-\ell,\ell\},\ t>0,\\
u_{x_2}(x_1,-\ell,t)=u_{x_2}(x_1,\ell,t)\ & x_1\in(0,L),\ t>0,\\
u_t(x_1,-\ell,t)+u(x_1,-\ell,t)=E(t)\, [u_t(x_1,\ell,t)+u(x_1,\ell,t)]\ & x_1\in(0,L),\ t>0,\\
u(x,0)=u_0(x)\ & x\in\Omega,\\
u_t(x,0)=u_1(x)\ & x\in\Omega.
\end{array}\right.
\endeq

Here, $u=u(x,t)$ represents the vertical displacement of the plate, $u_0(x)$ is its initial position while $u_1(x)$ is its initial vertical velocity.
Before discussing the other terms and conditions, let us remark that if one wishes to have smooth solutions, a compatibility condition between
boundary and initial conditions is needed:
\neweq{cc}
u_1(x_1,-\ell)+u_0(x_1,-\ell)=E(0)\, [u_1(x_1,\ell)+u_0(x_1,\ell)]\qquad\forall x_1\in(0,L)\ .
\endeq
The function $\varphi$ represents an external source, such as the wind, which is responsible for the variation of the total energy $\E(t)$ inserted in the
structure which, in turn, can be determined by
\neweq{gust}
\E(t)=\int_\Omega \varphi(x,t)^2\, dx\ .
\endeq
Of course, it is much simpler to compute $\E(t)$ in terms of the known source $\varphi$ rather than in terms of the unknown solution $u$
and its derivatives. The function $E$ is then defined by
\neweq{EE}
E(t)=\left\{\begin{array}{ll}
1\quad & \mbox{if }\E(t)\le\overline{E}\\
-1\quad & \mbox{if }\E(t)>\overline{E}
\end{array}\right.
\endeq
where $\overline{E}>0$ is the critical energy threshold defined in Section \ref{energybalance}; hence, the function $E(t)$ is a discontinuous
nonlocal term which switches to $\pm1$ according to whether the total energy is smaller/larger than the critical threshold.
So, \eq{truebeam}$_5$ is a dynamic boundary condition involving the total energy of the bridge.
When $\E(t)\le\overline{E}$ there is no torsional energy $E_t$ and the motion tends to become of pure vertical-type, that is, with $u_{x_2}\cong0$:
to see this, note that in this case \eq{truebeam}$_5$ may be written as
$$\frac{\partial}{\partial t}\Big\{[u(x_1,\ell,t)-u(x_1,-\ell,t)]e^t\Big\}=0\qquad\forall x_1\in(0,L)\ .$$
This means that as long as $\E(t)\le\overline{E}$, the map $t\mapsto|u(x_1,\ell,t)-u(x_1,-\ell,t)|$ decreases so that the two opposite endpoints
of any cross section tend to have the same vertical displacement and to move synchronously as in a pure vertical motion.
When $\E(t)>\overline{E}$ condition \eq{truebeam}$_5$ may be written as
$$\frac{\partial}{\partial t}\Big\{[u(x_1,\ell,t)+u(x_1,-\ell,t)]e^t\Big\}=0\qquad\forall x_1\in(0,L)\ .$$
This means that as long as $\E(t)>\overline{E}$, the map $t\mapsto|u(x_1,\ell,t)+u(x_1,-\ell,t)|$ decreases so that the two opposite endpoints
of any cross section tend to have zero average and to move asynchronously as in a pure torsional motion, that is, with
$u(x_1,0,t)\cong\frac12 [u(x_1,\ell,t)+u(x_1,-\ell,t)]\cong0$.\par
Note that in \eq{truebeam} the jump of $E(t)$ from/to $\pm1$ occurs simultaneously and instantaneously along all the points located on the sides
of the roadway; hence, either all or none of the cross sections have some torsional motion, in agreement with what has been observed for the
Tacoma Bridge, see \cite{Tacoma1} and also \cite[pp.50-51]{wake}. The form \eq{gust} and the switching criterion for $E(t)$ in \eq{EE} mean that
problem \eq{truebeam} models a situation
where if a gust of wind is sufficiently strong then, instantaneously, a torsional motion appears. One could also consider the case where
$$\E(t)\simeq\int_0^t\int_\Omega \varphi(x,\tau)^2\, dx\, d\tau\ $$
which would model a situation where if the wind blows for too long then at some critical time, instantaneously, a torsional motion appears.
However, the problem with \eq{gust} is much simpler because it is local in time.\par
The differential operator in \eq{truebeam} is derived according to the linear Kirchhoff-Love model for a thin plate, see Section \ref{elasticity}.
We have neglected a strong distinction between the bending and stretching elastic energies which are,
however, quite different in long narrow plates, see \cite[Section 8.3]{mansfield} and previous work by Cox \cite{cox}; if one wishes to make some
corrections, one should add a further nonlinear term $g(\nabla u,D^2u)$ and the equation would become quasilinear, see Problem \ref{gDu}.
But, as already mentioned in Section \ref{elasticity}, we follow here a compromise and merely consider a semilinear problem. Concerning the nonlinearity $f(u)$,
some superlinearity should be required. For instance, $f(u)=u+\eps u^3$ with $\eps>0$ small could be a possible choice; alternatively, one could take
$f(u)=a(e^{bu}-1)$ as in \cite{mckO} for some $a,b>0$. In the first case the hangers are sought as ideal springs and gravity is somehow neglected, in the second
case more relevance is given to gravity and to the possibility of slackening hangers. Finally, $\delta u_t$ is a damping term which represents the positive structural
damping of the structure; its role should be to weaken the effect of the nonlinear term $f(u)$, see Problem \ref{competition}.\par
As far as we are aware, there is no standard theory for problems like \eq{truebeam}. It is a nonlocal problem since it links behaviors on different
parts of the boundary and involves the function $E(t)$ in \eq{EE}. It also has dynamic boundary conditions which are usually delicate to handle.

\begin{problem} {\em Prove that if $\varphi(x,t)\equiv0$ then \eq{truebeam} only admits the trivial solution $u\equiv0$. The standard
trick of multiplying the equation in \eq{truebeam} by $u$ or $u_t$ and integrating over $\Omega$ does not allow to get rid of all the boundary terms.
Note that, in this case, $E(t)\equiv1$.}\endproof\end{problem}

\begin{problem}\label{ill} {\em Study existence and uniqueness results for \eq{truebeam}; prove continuous dependence results with respect to the
data $\varphi$, $u_0$, $u_1$, and with respect to possible perturbations of $f$. Of course, the delicate conditions to deal with are \eq{truebeam}$_4$
and \eq{truebeam}$_5$. If problem \eq{truebeam} were ill-posed, what must be changed in order to have a well-posed problem?}\endproof\end{problem}

\begin{problem}\label{competition} {\em Study \eq{truebeam} with no damping, that is, $\delta=0$: does the solution display oscillations such as \eq{pazzo}
when $t$ tends to some finite blow up instant? Then study the competition between the damping term $\delta u_t$ and the self-exciting term $f(u)$: for
a given $f$ is it true that if $\delta$ is sufficiently large then the solution $u$ is global in time? We believe that the answer is negative and that the
only effect of the damping term is to delay the blow up time.}\endproof\end{problem}

\begin{problem} {\em Determine the regularity of the solutions $u$ to \eq{truebeam} and study the importance of the compatibility
condition \eq{cc}.}\endproof\end{problem}

\begin{problem}\label{gDu} {\em Insert into the equation \eq{truebeam} a correction term for the elastic energies, something like
$$g(\nabla u,D^2u)=-\left(\frac{u_{x_1}}{\sqrt{1+u_{x_1}^2}}\right)_{x_1}-\gamma\left(\frac{u_{x_1}}{\sqrt{1+u_{x_1}^2}}\right)_{x_2}$$
with $\gamma>0$ small. Then prove existence, uniqueness and continuous dependence results.}\endproof\end{problem}

An important tool to study \eq{truebeam} would be the eigenvalues and eigenfunctions of the corresponding stationary problem.
In view of the dynamic boundary conditions \eq{truebeam}$_5$, a slightly simpler model could be considered, see \eq{eigen1} or \eq{eigen2}
and subsequent discussion in Section \ref{modes}. We have no feeling on what could be the better choice...

\subsection{A possible explanation of the Tacoma collapse}\label{possibleTacoma}

Hopefully, this paper sheds some further light on oscillating bridges. We have emphasised the necessity of models fulfilling the requirements of (GP) since,
otherwise, the solution will not display the phenomena visible in real bridges. In particular, any equation aiming to model the complex behavior of bridges should
contain information on at least two possible kinds of oscillations: this target may be achieved either by considering a PDE, or by considering coupled systems of ODE's,
or by linking the two oscillations within a unique function solving a suitable ODE. A further contribution of this paper is the remark that there might be some hidden
elastic energy in the bridge and that there is no simple way to detect it. Not only this energy is larger than the kinetic and potential energy but also it may give rise,
almost instantaneously, to self-excited oscillations.\par
We now put together all these observations in order to afford an explanation of the Tacoma collapse.
As we shall see, our explanation turns out to agree with all of them.\par\medskip\noindent
\textsf{On November 7, 1940, for some time before 10:00 AM, the bridge was oscillating as it did many other times before. The wind was apparently more
violent than usual and, moreover, it continued for a long time. The oscillations of the bridge were completely similar to those displayed in the interval
$(0,80)$ of the plot in Figure \ref{mille}. Since the energy involved was quite large, also the oscillations were quite large. The roadway was far
from its equilibrium position and, consequently, the restoring force due to the sustaining cables and to the hangers did not obey ${\cal LHL}$. The oscillations
were governed by a fairly complicated differential equation such as \eq{truebeam} which, however, may be approximated by \eq{maineq2} after assuming \eq{w0},
since this equation possesses the main features of many fourth order differential equations, both ODE's and PDE's. It is not clear which superlinear
function $f$ would better describe the restoring nonlinear elastic forces, but any function $f=f(s)$ asymptotically linear as $s\to0$ and superlinear
as $|s|\to\infty$ generates the same qualitative behavior of solutions, see Theorem \ref{blowup}.
As the wind was keeping on blowing, the total energy $\E$ in the bridge was increasing; the bridge started ruminating energy and, in particular,
its stored elastic energy $E_\ell$ was also increasing. Unfortunately, nobody knew how to measure $E_\ell$ because, otherwise, \eq{estimate}
would have taught in some advance that sudden wider vertical oscillations and subsequent torsional oscillations would have appeared. After each cycle, when
the cross section of the bridge violently reached position $B$ in Figure \ref{1}, a delta Dirac mass increased the internal elastic energy $E_\ell$.
As the wind was continuously blowing, after some cycles the elastic energy became larger than the critical energy threshold $\overline{E}$ of
the bridge, see Section \ref{energybalance}. This threshold may be computed provided one knows the elastic structure of the bridge, see Section
\ref{howto2}. As soon as $E_\ell>\overline{E}$, the function $E(t)$ in \eq{EE} switched from $+1$ to $-1$,
a torsional elastic energy $E_t$ appeared and gave rise, almost instantaneously, to a torsional motion.
As described in \eq{change}, due to the impulse, the energy switched to the first torsional mode $b_1$ rather than to a further
vertical mode $a_{11}$; so, the impulse forced $u$ to lower the number of modes
for which $a_m\not\cong0$ and the motion, which a moment before had involved nine or ten waves, shifted to two. At that moment,
the graph of the function $w=w(t)$, describing the bridge according to \eq{w0}, reached time $t=95$ in the plot in Figure \ref{mille}.
Oscillations immediately went out of control and after some more oscillations the bridge collapsed.}\par\smallskip

One should compare this description with the original one from the Report \cite{Tacoma1}, see also \cite[pp.26-29]{tac1} and \cite[Chapter 4]{wake}.

\subsection{What about future bridges?}\label{howplan}

Equation \eq{maineq2} is a simple prototype equation for the description of self-excited oscillations. None of the previously existing
mathematical models ever displayed this phenomenon which is also visible in oscillating bridges. The reason is not that the behavior of the bridge is
too complicated to be described by a differential equation but mainly because they fail to satisfy (GP); this prevents
the appearance of oscillations and therefore the projects based on the corresponding equation may contain huge errors.
In order to avoid bad surprises as in the past, many projects nowadays include stiffening trusses or strong dampers. This has the great advantage
to maintain the bridge much closer to its equilibrium position and to justify ${\cal LHL}$. But this also has disadvantages, see \cite{kawada2}.
First of all, they create an artificial stiffness which can give rise to the appearances of cracks in the more elastic structure of the bridge.
Second, damping effects and stiffening trusses significantly increase the weight and the cost of the whole structure.
Moreover, in extreme conditions, they may become useless: under huge external solicitations the bridge would again be too far from its equilibrium
position and would violate ${\cal LHL}$. So, hopefully, one should find alternative solutions, see again \cite{kawada2}.\par
One can act both on the structure and on the model. In order to increase the flutter speed, some suggestions on how to modify the design were made
by Rocard \cite[pp.169-173]{rocard}: he suggests how to modify the components of the bridge in order to raise the right hand side of \eq{speedflutter}.
More recently, some attempts to improve bridges performances can be found in \cite{hhs} where, in particular, a careful analysis of the role played by
the hangers is made. But much work has still to be done; from \cite[p.1624]{hhs}, we quote
\begin{center}
%\fbox%{%
\begin{minipage}{162mm}
{\em Research on the robustness of suspension bridges is at the very beginning.}
\end{minipage}
%}
\end{center}

From a theoretical point of view, one should first determine a suitable equation satisfying (GP).
Our own suggestion is to consider \eq{truebeam} where one should choose a reliable nonlinearity $f$ and add coefficients to the other terms,
according to the expected technical features of the final version of the bridge: its length, its width, its weight, the materials used for
the construction, the expected external solicitations, the structural damping... Since we believe that the solution to this equation may display blow up,
which means a strong instability, a crucial role is played by all the constants which appear in the equation. Hence, a careful measurement of these
parameters is necessary. Moreover, a sensitivity analysis for the continuous dependence of the solution on the parameters should be performed.
Once the most reliable nonlinearity and parameters are chosen, the so obtained equation should be tested numerically to see if the solution
displays dangerous phenomena. In particular, one should try to estimate, at least numerically, the critical energy threshold and the possible blow up time. Also
purely theoretical estimates are of great interest; in general, these are difficult to obtain but even if they are not very precise they can be of some help.\par
In a ``perfect model'' for a suspension bridge, one should also take into account the role of the sustaining cables and of the towers. Each cable links all
the hangers on the same side of the bridge, its shape is a catenary of given length and the hangers cannot all elongate at the same time. The towers link
the two cables and, in turn, all the hangers on both sides of the roadway. Cables and towers are further elastic components of the structure which, of course,
modify considerably the model, its oscillating modes, its total elastic energy, etc. In this paper we have not introduced these components but, in the next
future, this would be desirable.\par
An analysis of the more flexible parts of the roadway should also be performed; basically, this consists in measuring the ``instantaneous local energy'' defined by
\neweq{localenergy}
{\bf E}(u(t),\omega)=\int_\omega\left[\left(\frac{|\Delta u(t)|^2}{2}+(\sigma-1)\det(D^2u(t))\right)+\frac{u_t(t)^2}{2}+F(u(t))\right]\, dx_1dx_2
\endeq
for solutions $u=u(t)$ to \eq{truebeam}, for any $t\in(0,T)$, and for any subregion $\omega\subset\Omega$ of given area. In \eq{localenergy} we recognize the first
term to be as in the Kirchhoff-Love model, see \eq{energy-gs}; moreover, $F(s)=\int_0^s f(\sigma)d\sigma$.

\begin{problem} {\em Let $\Omega=(0,L)\times(-\ell,\ell)$, consider problem \eq{truebeam} and let $u=u(t)$ denote its solution provided it exists and is
unique, see Problem \ref{ill}. For given lengths $a<L$ and $b<2\ell$ consider the set $\Re$ of rectangles entirely contained in $\Omega$ and whose
sides have lengths $a$ (horizontal) and $b$ (vertical). Let ${\bf E}$ be as in \eq{localenergy} and consider the maximisation problem
$$\max_{\omega\in\Re}\ {\bf E}(u(t),\omega)\ .$$
Using standard tools from calculus of variations, prove that there exists an optimal rectangle and study its dependence on $t\in(0,T)$; the natural
conjecture is that, at least as $t\to T$, it is the ``middle rectangle'' $(\frac{L-a}{2},\frac{L+a}{2})\times(-\frac{b}{2},\frac{b}{2})$. Then one should find out
if there exists some optimal ratio $a/b$. Finally, it would be extremely useful to find the dependence of the energy ${\bf E}(u(t),\omega)$
on the measure $ab$ of the rectangle $\omega$; this would allow to minimise costs for reinforcing the plate. We do not expect analytical tools to be able
to locate optimal rectangles nor to give exact answers to the above problems so that a good numerical procedure could be of great help.}\endproof\end{problem}

In \cite[Chapter IV]{tac2} an attempt to estimate the impact of stiffening trusses is made, although only one kind of truss design is considered.
In order to determine the best way to display the truss, one should solve the following simplified problems from calculus of variations.
A first step is to consider the linear model.

\begin{problem}\label{pisa} {\em Assume that the rectangular plate $\Omega=(0,L)\times(-\ell,\ell)$ is simply supported on all its sides and that it is
submitted to a constant load $f\equiv1$. In the linear theory by Kirchhoff-Love model, see \eq{energy-f}, its elastic energy is given by
$$E_0(\Omega)=\ -\min_{u\in H^2\cap H^1_0(\Omega)}\ \int_{\Omega }\left(\frac{1}{2}\left(\Delta u\right)^{2}+(\sigma-1)\det(D^2u)-u\right)\, dx_1dx_2\ .$$
Here, $H^2\cap H^1_0(\Omega)$ denotes the usual Hilbertian Sobolev space which, since we are in the plane, is embedded into $C^{0,\alpha}(\overline{\Omega})$.
The unique minimiser $u$ solves the corresponding Euler-Lagrange equation which reads
$$\Delta^2u=1\mbox{ in }\Omega\, ,\quad u=\Delta u=0\mbox{ on }\partial\Omega$$
and which may be reduced to a system involving the torsional rigidity of $\Omega$:
$$-\Delta u=v\, ,\ -\Delta v=1\mbox{ in }\Omega\, ,\quad u=v=0\mbox{ on }\partial\Omega\, .$$
Let $\lambda>0$ and denote by $\Gamma_\lambda$ the set of connected curves $\gamma$ contained in $\overline{\Omega}$, such that $\gamma\cap\partial\Omega\neq\emptyset$,
and whose length is $\lambda$: the curves $\gamma$ represent the stiffening truss to be put below the roadway. For any $\gamma\in\Gamma_\lambda$ the elastic energy of the
reinforced plate $\Omega\setminus\gamma$ is given by
$$E_\gamma(\Omega)=\ -\min_{u\in H^2\cap H^1_0(\Omega\setminus\gamma)}\ \int_{\Omega }\left(\frac{1}{2}\left(\Delta u\right)^{2}+(\sigma-1)\det(D^2u)-u\right)\, dx_1dx_2\ ,$$
and this energy should be minimised among all possible $\gamma\in\Gamma_\lambda$:
$$\min_{\gamma\in\Gamma_\lambda}\ E_\gamma(\Omega)\ .$$
Is there an optimal $\gamma_\lambda$ for any $\lambda>0$? When we asked this question to Buttazzo \cite{buttazzo} and Santambrogio \cite{filippo},
we received positive answers; their optimism is justified by the connectedness assumption and by their previous work \cite{oudet,tilli} which also gives
hints on how the line $\gamma_\lambda$ should look like. In fact, for a realistic model, one should further require that all the endpoints of $\gamma_\lambda$ lie
on the boundary $\partial\Omega$. Finally, since the stiffening truss has a cost $C>0$ per unit length, one should also solve the minimisation problem
$$\min_{\lambda\ge0}\ \left\{C\lambda+\min_{\gamma\in\Gamma_\lambda}\ E_\gamma(\Omega)\right\}\, ;$$
if $C$ is sufficiently small, we believe that the minimum exists.}\endproof\end{problem}

Problem \ref{pisa} is just a simplified version of the ``true problem'' which... should be completed!

\begin{problem}\label{upsilon1} {\em Let $\Omega=(0,L)\times(-\ell,\ell)$ and fix some $\lambda>0$. Denote by $\Gamma_\lambda$ the set of connected
curves contained in $\overline{\Omega}$ whose length is $\lambda$ and whose endpoints belong to $\partial\Omega$. Study the minimisation problem
$$\min_{\gamma\in\Gamma_\lambda}\ \left|\min_{u\in{\mathcal H}(\Omega\setminus\gamma)}\
\int_{\Omega}\left(\frac{1}{2}\left(\Delta u\right)^{2}+(\sigma-1)\det(D^2u)-u\right)\, dx_1dx_2\right|\ ,$$
where
$$
{\mathcal H}(\Omega\setminus\gamma)=\{u\in H^2(\Omega\setminus\gamma);\ \mbox{\eq{eigen2}$_2$ holds},
\ \mbox{+ something on }x_2=\pm\ell\mbox{ and on }\gamma\}\ .
$$
First of all, instead of ``something'', one should find the correct conditions on $x_2=\pm\ell$ and on $\gamma$. This could also suggest to modify the energy function
to be minimised with an additional boundary integral. The questions are similar. Is there an optimal $\gamma_\lambda$
for any $\lambda>0$? Is there an optimal $\lambda>0$ if one also takes into account the cost?}\endproof\end{problem}

Then one should try to solve the same problems with variable loads.

\begin{problem} {\em Solve Problems \ref{pisa} and \ref{upsilon1} with nonconstant loads $f\in L^2(\Omega)$, so that the minimum problem becomes
$$\min_{\gamma\in\Gamma_\lambda}\ \left|\min_{u\in{\mathcal H}(\Omega\setminus\gamma)}\
\int_{\Omega}\left(\frac{1}{2}\left(\Delta u\right)^{2}+(\sigma-1)\det(D^2u)-\, fu\right)\, dx_1dx_2\right|\ .$$
What happens if $f\not\in L^2(\Omega)$? For instance, if $f$ is a delta Dirac mass concentrated at some point $x_0\in\Omega$.}\endproof\end{problem}
\par\bigskip\noindent
{\bf Acknowledgment.} The author is grateful to his colleague Pier Giorgio Malerba, a structural engineer at the Politecnico of Milan,
for several interesting and stimulating discussions.


\begin{thebibliography}{99}%{\small
\bibitem{abdel} A.M. Abdel-Ghaffar, {\em Suspension bridge vibration: continuum formulation}, J. Eng. Mech. {\bf 108}, 1215-1232 (1982)
\bibitem{ahmed} N.U. Ahmed, H. Harbi, {\em Mathematical analysis of dynamic models of suspension bridges}, SIAM J. Appl. Math. {\bf 58}, 853-874 (1998)
\bibitem{akesson} B. Akesson, {\em Understanding bridges collapses}, CRC Press, Taylor \& Francis Group, London (2008)
\bibitem{ammann} O.H. Ammann, {\em Additional stiffening of Bronx-Whitestone Bridge}, Civil Engineering, March (1946)
\bibitem{Tacoma1} O.H. Ammann, T. von K\'{a}rm\'{a}n, G.B. Woodruff, {\em The failure of the Tacoma Narrows Bridge},
Federal Works Agency, 1941
\bibitem{anon} Anonymous, {\em Stays and brakes check oscillation of Whitestone Bridge}, Engineering News-Record, December 5 (1940)
\bibitem{antman1} S.S. Antman, {\em Ordinary differential equations of nonlinear elasticity. I. Foundations of the theories of nonlinearly elastic rods
and shells}, Arch. Rat. Mech. Anal. {\bf 61}, 307-351 (1976)
\bibitem{antman2} S.S. Antman, {\em Buckled states of nonlinearly elastic plates}, Arch. Rat. Mech. Anal. {\bf 67}, 111-149 (1978)
\bibitem{ball} J.M. Ball, {\em Convexity conditions and existence theorems in nonlinear elasticity}, Arch. Rat. Mech. Anal. {\bf 63}, 337-403 (1977)
\bibitem{benci} V. Benci, D. Fortunato, {\em Existence of solitons in the nonlinear beam equation}, J. Fixed Point Theory Appl. {\bf 11}, 261-278 (2012)
\bibitem{bender} C. Bender, {\em Historical sketch of the successive improvements in suspension bridges to the present time}, American Soc. Civil Eng.
Vol. I, 27-43 (1872)
\bibitem{bfgk} E. Berchio, A. Ferrero, F. Gazzola, P. Karageorgis, {\em Qualitative behavior of global solutions to some nonlinear
fourth order differential equations}, J. Diff. Eq. {\bf 251}, 2696-2727 (2011)
\bibitem{bleichsolo} F. Bleich, {\em Dynamic instability of truss-stiffened suspension bridges under wind action}, Proceedings ASCE {\bf 74}, 1269-1314 (1948)
\bibitem{bleich} F. Bleich, C.B. McCullough, R. Rosecrans, G.S. Vincent, {\em The Mathematical theory of vibration in
suspension bridges}, U.S. Dept. of Commerce, Bureau of Public Roads, Washington D.C. (1950)
\bibitem{bodgi} J. Bodgi, S. Erlicher, P. Argoul, {\em Lateral vibration of footbridges under crowd-loading: continuous crowd modeling approach},
Key Engineering Materials {\bf 347}, 685-690 (2007)
\bibitem{brown} J.M.W. Brownjohn, {\em Observations on non-linear dynamic characteristics of suspension bridges},
Earthquake Engineering \& Structural Dynamics {\bf 23}, 1351-1367 (1994)
\bibitem{bruno} D. Bruno, V. Colotti, F. Greco, P. Lonetti, {\em A parametric deformability analysis of long span
bridge schemes under moving loads}, IABMAS04, $2^{nd}$ International Conference on Bridge
Maintenance, Safety and Management, 735-737, Kyoto (2004)
\bibitem{buttazzo} G. Buttazzo, private discussion held at the conference on {\em New trends in shape optimization}, Pisa (2012)
\bibitem{oudet} G. Buttazzo, E. Oudet, E. Stepanov, {\em Optimal transportation problems with free Dirichlet regions}, In: Variational methods for
discontinuous structures, Cernobbio 2001, Progress in Nonlinear Differential Equations {\bf 51}, 41-65, Birkh\"auser Verlag, Basel (2002)
\bibitem{cauchy} A. Cauchy, {\em Recherches sur l'\'equilibre et le mouvement int\'erieur des corps solides ou fluides, \'elastiques ou non
\'elastiques}, Bulletin des Science de la Soci\'et\'e Philomathique de Paris, 9-13 (1823)
\bibitem{chenmck} Y. Chen, P.J. McKenna, {\em Traveling waves in a nonlinearly suspended beam: theoretical
results and numerical observations}, J. Diff. Eq. {\bf 136}, 325-355 (1997)
\bibitem{chl} E. Chladni, {\em Entdeckungen \"uber die theorie des klanges}, Weidmanns Erben und Reich, Leipzig (1787)
\bibitem{ciarletbook} P.G. Ciarlet, {\em Mathematical elasticity. Vol.II, Theory of plates}, Studies in Mathematics and its Applications
{\bf 27}, North-Holland Publishing Co. Amsterdam (1997)
\bibitem{aer} M. Como, {\em Stabilit\`a aerodinamica dei ponti di grande luce. Introduzione all'ingegneria strutturale}, E. Giangreco, Torino, UTET (2002)
\bibitem{como} M. Como, S. Del Ferraro, A. Grimaldi, {\em A parametric analysis of the flutter instability for long span suspension
bridges}, Wind and Structures {\bf 8}, 1-12 (2005)
\bibitem{cox} H.L. Cox, {\em Buckling of thin plates in compression}, Aero. Res. Council Rep. \& Memor. {\bf 1554}, H.M.S.O. (1933)
\bibitem{ambrosio} L. D'Ambrosio, E. Mitidieri, {\em Entire solutions of certain fourth order elliptic problems and related inequalities}, preprint
\bibitem{destuyndersalaun} P. Destuynder, M. Salaun, {\em Mathematical analysis of thin plate models},
Math\'ematiques \& Applications {\bf 24}, Springer-Verlag, Berlin (1996)
\bibitem{diana} G. Diana, private discussion held at the IABMAS12 conference, Stresa (2012)
\bibitem{drabek} P. Dr\'abek, G. Holubov\'a, A. Matas, P. Ne\v cesal, {\em Nonlinear models of suspension bridges: discussion of the results},
Appl. Math. {\bf 48}, 497-514 (2003)
\bibitem{britannica} Encyclopedia Britannica, Academic Edition,\\
http://www.britannica.com/EBchecked/topic/271336/Hookes-law
\bibitem{volgobridge} English Russia, {\em No more dancing for the bridge in Volgograd?},\\
http://englishrussia.com/2012/01/18/no-more-dancing-for-the-bridge-in-volgograd/
\bibitem{farq} F.B. Farquharson, {\em Letter to the Editor}, ENR, p.37, July 3 (1941)
\bibitem{tac1} F.B. Farquharson, {\em Aerodynamic stability of suspension bridges: with special reference to the Tacoma Narrows Bridge,
Part I: Investigations prior to October 1941}, Investigation conducted by the Structural Research Laboratory, University of Washington -
Seattle: University of Washington Press (1949)
\bibitem{tac3} F.B. Farquharson, {\em Aerodynamic stability of suspension bridges: with special reference to the Tacoma Narrows Bridge,
Part III: The Investigation of the original Tacoma Narrows Bridge}, Investigation conducted by the Structural Research Laboratory,
University of Washington - Seattle: University of Washington Press (1952)
\bibitem{tac4} F.B. Farquharson, {\em Aerodynamic stability of suspension bridges: with special reference to the Tacoma Narrows Bridge,
Part IV: Model investigations which influence the design of the new Tacoma Narrows Bridge}, Investigation conducted by the Structural Research Laboratory,
University of Washington - Seattle: University of Washington Press (1954)
\bibitem{fazzo} L. Fazzo, {\em Traballa il ponte di Assago: spavento dopo il concerto}, il Giornale.it, 26 Feb. 2011,
http://www.ilgiornale.it/milano/traballa$_-$ponte$_-$assago$_-$spavento$_-$dopo$_-$concerto/26-02-2011/articolo-id=508454
\bibitem{fernandes} G.R. Fernandes, G.J. Denipotti, D.H. Konda, {\em A BEM formulation for analysing the coupled stretching-bending problems of plates
reinforced by rectangular beams with columns defined in the domain}, Comput. Mech. {\bf 45}, 523-539 (2010)
\bibitem{fergaz} A. Ferrero, F. Gazzola, in preparation
\bibitem{fgg} A. Ferrero, F. Gazzola, H.-Ch. Grunau, {\em Decay and eventual local positivity for biharmonic parabolic equations}, Disc. Cont.
Dynam. Syst. {\bf 21}, 1129-1157 (2008)
\bibitem{franck} L. Franck, P. Lestuzzi, A. Low, {\em Synchronous lateral excitation of footbridges}, Swiss Federal Institute of Technology, IMAC-EPFL
Lausanne (2009)
\bibitem{Friedrichs} K. Friedrichs, {\em Die randwert und eigenwertprobleme aus der theorie der elastischen platten (anwendung der direkten
methoden der variationsrechnung)}, Math. Ann. {\bf 98}, 205-247 (1927)
\bibitem{GG} F. Gazzola, H.-Ch. Grunau, {\em Radial entire solutions for supercritical biharmonic equations}, Math. Ann. {\bf 334}, 905-936 (2006)
\bibitem{gg2} F. Gazzola, H.-Ch. Grunau, {\em Eventual local positivity for a biharmonic heat equation in $\R^n$},
Disc. Cont. Dynam. Syst. S {\bf 1}, 83-87 (2008)
\bibitem{gazgruswe} F. Gazzola, H.-Ch. Grunau, G. Sweers, {\em Polyharmonic boundary value problems}, LNM \textbf{1991}, Springer (2010)
\bibitem{gazpav} F. Gazzola, R. Pavani, {\em Blow up oscillating solutions to some nonlinear fourth order differential equations},
Nonlinear Analysis {\bf 74}, 6696-6711 (2011)
\bibitem{gazpav2} F. Gazzola, R. Pavani, {\em Blow-up oscillating solutions to some nonlinear fourth order differential equations
describing oscillations of suspension bridges}, IABMAS12, $6^{th}$ International Conference on Bridge Maintenance, Safety, Management,
Resilience and Sustainability, 3089-3093, Stresa 2012, Biondini \& Frangopol (Editors), Taylor \& Francis Group, London (2012)
\bibitem{gazpav3} F. Gazzola, R. Pavani, {\em Wide oscillations finite time blow up for solutions to nonlinear fourth order differential equations},
Arch. Rat. Mech. Anal. {\bf 207}, 717-752 (2013)
\bibitem{goldstein} H. Goldstein, C. Poole, J. Safko, {\em Classical mechanics}, 3rd Edition, Addison Wesley (2002)
\bibitem{green} D. Green, W.G. Unruh, {\em Tacoma Bridge failure - a physical model}, Amer. J. Physics {\bf 74}, 706-716 (2006)
\bibitem{gurtin} M.E. Gurtin, {\em On the nonlinear theory of elasticity}, In: Contemporary developments
in continuum mechanics and partial differential equations, 237-253, G.M. de la Penha \& L.A. Medeiros (Editors), North-Holland, Amsterdam (1978)
\bibitem{hhs} M. Haberland, S. Hass, U. Starossek, {\em Robustness assessment of suspension bridges},  IABMAS12, $6^{th}$ International Conference
on Bridge Maintenance, Safety, Management, Resilience and Sustainability, 1617-1624, Stresa 2012, Biondini \& Frangopol (Editors),
Taylor \& Francis Group, London (2012)
\bibitem{hao} S. Hao, {\em I-35W Bridge collapse}, J. Bridge Eng. {\bf 15}, 608-614 (2010)
\bibitem{hayden} M. Hayden, {\em The book of bridges}, Galahad Books, New York, 1976
\bibitem{iabmas} IABMAS 2012 ($6^{th}$ International Conference on Bridge Maintenance, Safety and Management), Stresa 2012,
http://www.iabmas2012.org/
\bibitem{bridgefailure} D. Imhof, {\em Bridge failure database}, University of Cambridge,\\
http://www.bridgeforum.org/dir/collapse/type/
\bibitem{kawada2} T. Kawada, {\em History of the modern suspension bridge: solving the dilemma between economy and stiffness}, ASCE Press (2010)
\bibitem{kawada} T. Kawada, A. Hirai, {\em Additional mass method - A new approach to suspension bridge rehabilitation}, Official Proceedings, 2nd
Annual International Bridge Conference, Engineers Society of Western Pennsylvania, Pittsburgh (1985)
\bibitem{kirchhoff} G.R. Kirchhoff, {\em {\"U}ber das gleichgewicht und die bewegung einer elastischen scheibe}, J. Reine Angew. Math. {\bf 40}, 51-88 (1850)
\bibitem{lacarbonara2} W. Lacarbonara, {\em Nonlinear structural mechanics}, Springer (2013)
\bibitem{lacarbonara} W. Lacarbonara, V. Colone, {\em Dynamic response of arch bridges traversed by high-speed trains}, J. Sound and Vibration {\bf 304},
72-90 (2007)
\bibitem{lzmck} A.C. Lazer, P.J. McKenna, {\em Large scale oscillatory behaviour in loaded asymmetric systems}, Ann. Inst. H. Poincar\'e Anal. non Lin.
{\bf 4}, 243-274 (1987)
\bibitem{mck1} A.C. Lazer, P.J. McKenna, {\em Large-amplitude periodic oscillations in suspension bridges: some new
connections with nonlinear analysis}, SIAM Rev. {\bf 32}, 537-578 (1990)
\bibitem{levy} M. L\'evy, {\em Sur l'\'equilibre \'elastique d'une plaque rectangulaire}, Comptes Rendus Acad. Sci. Paris {\bf 129}, 535-539 (1899)
\bibitem{love} A.E.H. Love, {\em A Treatise on the mathematical theory of elasticity}, 4th edition, Cambridge Univ. Press (1927)
\bibitem{macdonald} J.H.G. Macdonald, {\em Lateral excitation of bridges by balancing pedestrians}, Proceedings of the Royal Society A,
Mathematical, Physical \& Engineering Sciences {\bf 465}, 1055-1073 (2009)
\bibitem{mansfield} E.H. Mansfield, {\em The bending and stretching of plates}, 2nd edition, Cambridge Univ. Press (2005) (first edition in 1964)
\bibitem{matas} A. Matas, J. O\v cen\'a\v sek, {\em Modelling of suspension bridges}, Proceedings of Computational Mechanics {\bf 2}, 275-278 (2002)
\bibitem{mckmonth} P.J. McKenna, {\em Torsional oscillations in suspension bridges revisited: fixing an old approximation},
Amer. Math. Monthly {\bf 106}, 1-18 (1999)
\bibitem{mck} P.J. McKenna, {\em Large-amplitude periodic oscillations in simple and complex mechanical systems: outgrowths from nonlinear analysis},
Milan J. Math. {\bf 74}, 79-115 (2006)
\bibitem{mckmoore} P.J. McKenna, K.S. Moore, {\em The global structure of periodic solutions to a suspension bridge mechanical model}, IMA J. Appl. Math.
{\bf 67}, 459-478 (2002)
\bibitem{mckO} P.J. McKenna, C.\'O Tuama, {\em Large torsional oscillations in suspension bridges visited
again: vertical forcing creates torsional response}, Amer. Math. Monthly {\bf 108}, 738-745 (2001)
\bibitem{McKennaWalter} P.J. McKenna, W. Walter, {\em Nonlinear oscillations in a suspension bridge},
Arch. Rat. Mech. Anal. {\bf 98}, 167-177 (1987)
\bibitem{mck4} P.J. McKenna, W. Walter, {\em Travelling waves in a suspension bridge}, SIAM J. Appl. Math. {\bf 50}, 703-715 (1990)
\bibitem{melan} J. Melan, {\em Theory of arches and suspension bridges}, Myron Clark Publ. Comp. (1913)
\bibitem{mindlin} R. Mindlin, {\em Influence of rotatory inertia and shear on flexural motions of isotropic elastic plates}, J. Appl. Mech. {\bf 18},
31-38 (1951)
\bibitem{moran} B. Moran, {\em A bridge that didn't collapse}, American Heritage of Invention and Technology {\bf 15}, 10-18 (1999)
\bibitem{nadai} A. Nadai, {\em Die elastischen platten}, Springer-Verlag, Berlin (1968) (first edition in 1925)
\bibitem{naghdi} P.M. Naghdi, {\em The theory of shells and plates}, In: Handbuch der Physik {\bf 6a/2}, 425-640, S. Fl\"ugge \& C. Truesdell (Editors),
Springer, Berlin (1972)
\bibitem{navier} C.L. Navier, {\em Extraits des recherches sur la flexion des plans \'elastiques}, Bulletin des Science de la Soci\'et\'e
Philomathique de Paris, 92-102 (1823)
\bibitem{navier2} C.L. Navier, {\em M\'emoire sur les ponts suspendus}, Imprimerie Royale, Paris (1823)
\bibitem{pavlo} E.L. Pavlo, {\em Widening and stiffening Whitestone Bridge}, Engineering News-Record, October 2 (1947)
\bibitem{pearson} C. Pearson, N. Delatte, {\em Collapse of the Quebec Bridge, 1907}, J. Perform. Constr. Facil. {\bf 20}, 84-91 (2006)
\bibitem{pt} L.A. Peletier, W.C. Troy, {\em Spatial patterns. Higher order models in physics and mechanics.
Progress in Nonlinear Differential Equations and their Applications}, {\bf 45}, Birkh\"auser Boston Inc. (2001)
\bibitem{perks} B. Perks, {\em The Golden Gateway, pictures and story},\\
http://pcimagenetwork.com/golden$_-$gate/golden$_-$gate.html
\bibitem{provis} W.A. Provis, {\em Observations on the effect of wind on the suspension bridge over Menai Strait}, Minutes of Proceedings
of the Institution of Civil Engineers {\bf 1}, 74-77 (1841)
\bibitem{reid} W. Reid, {\em A short account of the failure of a part of the Brighton Chain Pier, in the gale of the 30th of November 1836},
Papers on Subjects Connected with the Duties of the Corps of Royal Engineers (Professional Papers of the Corps of Royal Engineers), Vol. I (1844)
\bibitem{reissner1} E. Reissner, {\em On the theory of bending elastic plates}, J. Math. and Phys. {\bf 23}, 184-191 (1944)
\bibitem{reissner2} E. Reissner, {\em The effect of transverse shear deformations on the bending of elastic plates}, J. Appl. Mech. {\bf 12}, 69-77 (1945)
\bibitem{rocard} Y. Rocard, {\em Dynamic instability: automobiles, aircraft, suspension bridges}, Crosby Lockwood, London (1957)
\bibitem{russell} J.S. Russell, {\em On the vibration of suspension bridges and other structures; and the means of preventing injury from this cause},
Transactions of the Royal Scottish Society of Arts, Vol.1 (1841)
\bibitem{sanderson} K. Sanderson, {\em Millennium bridge wobble explained}, Nature, Published online 17 December 2008, doi:10.1038/news.2008.1311
\bibitem{filippo} F. Santambrogio, private discussion held at the conference on {\em New trends in shape optimization}, Pisa (2012)
\bibitem{tilli} F. Santambrogio, P. Tilli, {\em Blow-up of optimal sets in the irrigation problem}, J. Geom. Anal. {\bf 15}, 343-362 (2005)
\bibitem{scanlan} R.H. Scanlan, {\em Developments in low-speed aeroelasticity in the civil engineering field}, AIAA Journal {\bf 20}, 839-844 (1982)
\bibitem{scantom} R.H. Scanlan, J.J. Tomko, {\em Airfoil and bridge deck flutter derivatives}, J. Eng. Mech. {\bf 97}, 1717-1737 (1971)
\bibitem{wake} R. Scott, {\em In the wake of Tacoma. Suspension bridges and the quest for aerodynamic stability}, ASCE Press (2001)
\bibitem{tac2} F.C. Smith, G.S. Vincent, {\em Aerodynamic stability of suspension bridges: with special reference to the Tacoma Narrows Bridge,
Part II: Mathematical analysis}, Investigation conducted by the Structural Research Laboratory, University of Washington -
Seattle: University of Washington Press (1950)
\bibitem{stella} A. Stella, {\em Assago, la passerella della paura smetter\`a di ballare}, Corriere della Sera, 01 Marzo 2011,\\
http://milano.corriere.it/milano/notizie/cronaca/11$_-$marzo$_-$1/assago-passerella-ammortizzatore-oscillazioni-forum-190124249413.shtml
\bibitem{timoshenko} S. Timoshenko, {\em Theory of elasticity}, McGraw-Hill, New York (1951) (first edition in 1934)
\bibitem{timoshenkoplate} S. Timoshenko, W. Woinowsky-Krieger, {\em Theory of plates and shells}, McGraw-Hill, New York (1959)
\bibitem{truesdell} C. Truesdell, {\em Some challenges offered to analysis by rational thermomechanics}, In: Contemporary developments
in continuum mechanics and partial differential equations, 495-603, G.M. de la Penha \& L.A. Medeiros (Editors), North-Holland, Amsterdam (1978)
\bibitem{ventsel} E. Ventsel, T. Krauthammer, {\em Thin plates and shells: theory, analysis, and applications}, Marcel Dekker inc., New York (2001)
\bibitem{veranzio} F. Verantius, {\em Machinae novae}, 1615
\bibitem{villaggio} P. Villaggio, {\em Mathematical models for elastic structures}, Cambridge University Press, Cambridge (1997)
\bibitem{tac5} G.S. Vincent, {\em Aerodynamic stability of suspension bridges: with special reference to the Tacoma Narrows Bridge,
Part V: Extended studies: Logarithmic decrement field damping}, Investigation conducted by the Structural Research Laboratory,
University of Washington - Seattle: University of Washington Press (1954)
\bibitem{karman} T. von K\'arm\'an, {\em Feestigkeitsprobleme in maschinenbau}, In: Encycl. der Mathematischen Wissenschaften, Vol. IV/4C, 348-352,
F. Klein \& C. M\"uller (Editors), Leipzig (1910)
\bibitem{cable} R. Walther, B. Houriet, W. Isler, P. Mo\"ia, J.-F. Klein, {\em Cable stayed bridges}, Thomas Telford Publishing, London,
2nd edition (1999)
\bibitem{ward} K. Wardhana, F.C. Hadipriono, {\em Analysis of recent bridge failures in the United States},
J. Perform. Constr. Facil. {\bf 17}, 144-150 (2003)
\bibitem{crack} Y. Xiang, G. Tang, C. Liu, {\em Cracking mechanism and simplified design method for bottom flange in prestressed
concrete box girder bridge}, J. Bridge Eng. {\bf 16}, 267-274 (2011)
\bibitem{zanaboni} O. Zanaboni, {\em Risoluzione, in serie semplice, della lastra rettangolare appoggiata, sottoposta all'azione di un carico
concentrato comunque disposto}, Annali Mat. Pura Appl. {\bf 19}, 107-124 (1940)
\bibitem{zivanovic} S. \v Zivanovi\'c, I.M. D\'iaz, A. Pavi\'c, {\em Influence of walking and standing crowds on structural dynamic properties},
Proceedings of the IMAC-XXVII, Orlando 2009, Society for Experimental Mechanics Inc.

\par\bigskip
{\large {\bf VIDEOS}}
\bibitem{assago} Assago Bridge oscillation, http://www.youtube.com/watch?v=C5V9ymTD3e4
\bibitem{chladniexperiment} Chladni experiment on vibrating membranes, http://www.youtube.com/watch?v=Qf0t4qIVWF4
\bibitem{london} London Millennium Bridge swaying, http://www.youtube.com/watch?v=eAXVa$_-$XWZ8
\bibitem{tacoma} Tacoma Narrows Bridge collapse, http://www.youtube.com/watch?v=3mclp9QmCGs
\bibitem{volgograd} Volgograd Bridge oscillations, http://www.bbc.co.uk/news/10138398
%}
\end{thebibliography}
\end{document}